\documentclass{amsart}
\usepackage{amssymb,amsmath,amscd,amsthm,enumerate,color,verbatim}
\usepackage[all,cmtip]{xy}
\usepackage{latexsym}
\usepackage{enumitem}
\usepackage{graphicx}
\usepackage{subfigure}
\usepackage{tikz}


\def\SL{{\mathrm {SL}}}
\def\LC{{\mathrm {LC}}}
\def\SH{{\mathrm {SH}}}
\def\loc{{\mathrm{loc}}}
\def\e{{\varepsilon}}

\def\beq{\begin{equation}}
	\def\eeq{\end{equation}}


\newcommand{\Z}{{\mathbb Z}}
\newcommand{\R}{{\mathbb R}}

\newcommand{\E}{{\mathbb E}}

\newcommand{\bbP}{{\mathbb{P}}}

\newcommand{\norm}[1]{\left\Vert#1\right\Vert}
\newcommand{\base}{K}

\newcommand{\CA}{{\mathcal A}}

\newcommand{\CF}{{\mathcal F}}

\newcommand{\CB}{{\mathcal B}}

\newcommand{\CZ}{{\mathcal Z}}

\renewcommand{\vec}[1]{\underline{#1}}
\newcommand{\set}[1]{\left\{#1\right\}}

\newtheorem{lemma}{Lemma}[section]
\newtheorem{theorem}[lemma]{Theorem}
\newtheorem{prop}[lemma]{Proposition}
\newtheorem{coro}[lemma]{Corollary}

\theoremstyle{definition}
\newtheorem{definition}[lemma]{Definition}
\newtheorem{remark}[lemma]{Remark}

\sloppy

\begin{document}

\title[Anderson Localization]{Schr\"odinger Operators with Potentials Generated by Hyperbolic Transformations: II.~Large Deviations and Anderson Localization}

\author{Artur Avila, David Damanik, and Zhenghe Zhang}

\address{Institut f\"ur Mathematik, Universit\"at Z\"urich, Winterthurerstrasse 190, 8057 Z\"urich, Switzerland and IMPA, Estrada D. Castorina 110, Jardim Bot\^anico, 22460-320 Rio de Janeiro, Brazil}

\email{artur@math.sunysb.edu}

\address{Department of Mathematics, Rice University, Houston, TX~77005, USA}

\email{damanik@rice.edu}

\address{Department of Mathematics, University of California, Riverside, CA-92521, USA}
\email{zhenghe.zhang@ucr.edu}

\thanks{D.\ D.\ was supported in part by NSF grants DMS--1700131 and DMS--2054752.}
\thanks{Z.\ Z.\ was supported in part by NSF grant DMS--1764154.}

\begin{abstract}
We consider discrete one-dimensional Schr\"odinger operators whose potentials are generated by H\"older continuous sampling along the orbits of a uniformly hyperbolic transformation. For any ergodic measure satisfying a suitable bounded distortion property, we establish a uniform large deviation estimate in a large energy region provided that the sampling function is locally constant or has small supremum norm. We also prove full spectral Anderson localization for the operators in question.
\end{abstract}

\maketitle

\tableofcontents

\section{Introduction}

It is a well-known phenomenon that the one-dimensional Anderson model is localized throughout the spectrum. Given a compactly supported probability measure $\nu$ on $\R$, the Anderson model is given by the discrete Schr\"odinger operator on $\ell^2(\Z)$, acting as
\begin{equation}\label{eq:dyndefpot}
[H_\omega \psi] (n) = \psi(n+1) + \psi(n-1) + V_\omega(n) \psi(n),
\end{equation}
where the random potential $V_\omega(n)$ is given by independent random variables, each distributed according to $\nu$. Formally this can be modeled by considering the compact product $\Omega = (\mathrm{supp} \, \nu)^\Z$, the shift transformation $T : \Omega \to \Omega$, $[T \omega](n) = \omega(n+1)$, and the $T$-ergodic measure $\mu = \nu^\Z$ on $\Omega$, as the potential can then be written in the form of a dynamically defined potential in the sense of \cite{DF22},
\begin{equation}\label{eq:dyndefoper}
V_\omega(n) = f(T^n \omega),
\end{equation}
where the sampling function $f : \Omega \to \R$ is given by the evaluation at the origin,
\begin{equation}\label{eq:AMsamplingfunction}
f(\omega) = \omega(0).
\end{equation}
The localization statement for the operator family $\{ H_\omega \}_{\omega \in \Omega}$ typically takes two forms. Spectral localization means that for $\mu$-almost every $\omega \in \Omega$, the operator $H_\omega$ has pure point spectrum with exponentially decaying eigenfunctions. Dynamical localization means that the unitary group associated with $H_\omega$ has exponential off-diagonal decay relative to the standard orthonormal basis $\{ \delta_n \}_{n \in \Z}$ of $\ell^2(\Z)$ uniformly in time, either in an almost sure sense or in expectation with respect to~$\mu$.

Spectral localization and dynamical localization are not equivalent for general operators, but they both hold for the Anderson model. Proving these localization statements for the Anderson model is most difficult in the case of a singular single-site distribution, for example the Bernoulli case, where $\mathrm{supp} \, \nu$ has cardinality $2$. In fact it is true that the Bernoulli case is the most difficult case to handle, as any localization proof that covers the Bernoulli case will also cover the general case. We refer the reader to \cite{CKM87} for the first proof of spectral localization in the Bernoulli case and to \cite{bucaj2, GZ20, GK21, JZ19} for recent treatments of this model, which establish both spectral and dynamical localization and are simpler and more conceptual (in that they rely on one-dimensional tools, rather than verifying the input necessary to run a multi-scale analysis).

Given this state of affairs, one might say that the one-dimensional Anderson model is completely understood. Under the hood, however, all the proofs rely crucially on the independence of the potential values, and hence only apply to the special choice \eqref{eq:AMsamplingfunction} of the sampling function $f$. From the perspective of the general theory of ergodic Schr\"odinger operators in $\ell^2(\Z)$, \cite{DF22}, it is natural to ask whether localization extends to more general choices of $f$. Indeed, this should even be expected to be the case, based on the heuristics of the situation. Alas, prior to the present work, such an extension was well outside the scope of the existing approaches. One could argue that it is the independence of the underlying entries of $\omega$ that is responsible for localization phenomena, as long as the sampling function is still sufficiently ``local'' in the sense that the values of $f(\omega)$ should be most heavily influenced by the values of $\omega_n$ for $n$ not too large. Of course this will be true for any continuous sampling function $f : \Omega \to \R$, but upon closer inspection, quantitative continuity properties of $f$ become relevant, as many exotic spectral phenomena turn out to occur for a generic continuous sampling function. 

As a very specific example of interest, consider the case of locally constant $f \in C(\Omega,\R)$, which are those for which $f(\omega)$ only depends on the values of $\omega_n$ in a fixed finite range of $n$'s. The localization problem for the associated operators $\{ H_\omega \}_{\omega \in \Omega}$ was previously inaccessible, but it will be fully covered by our work.

The approach to proving localization developed in the current series of papers applies to a much larger class of models. This is relevant as this class includes additional specific examples of interest that are formally distinct from the Anderson model, but share the feature that is crucial to our method, namely the fact that the underlying ergodic base dynamical system $(\Omega,T,\mu)$ is a subshift of finite type with the ergodic measure having the bounded distortion property. We will give precise definitions in Section~\ref{sec:mainresults} but mention here that this allows us to establish a localization result throughout the entire spectrum for Arnold's cat map. For this particular example, there was only the previous work \cite{BS} that had to exclude energy intervals of positive length from consideration. 

The general structure of our localization proof follows the outline of previous works (e.g., \cite{B, BoGo, BS}):
\begin{itemize}

\item[(i)] Establish the positivity of the Lyapunov exponent, $L(E)$, for sufficiently many energies (i.e., the exceptional set $\{ E : L(E) = 0 \}$ should be shown to be discrete).

\item[(ii)] Prove large deviation estimates for the Lyapunov exponent.

\item[(iii)] Eliminate double resonances. 

\end{itemize}

Item (i) was addressed in part I of this series \cite{ADZ}. In the present follow-up work we address items (ii) and (iii). 

To summarize, together with part I of the series, \cite{ADZ}, the present paper succeeds in establishing spectral and dynamical localization for Schr\"odinger operators that were expected to share these features with the Anderson model, but were structurally so different from it that previous methods were insufficient to make progress towards results of this kind.

We give precise definitions and statements of our results in Section~\ref{sec:mainresults}. As we can quote positivity results for the Lyapunov exponent from \cite{ADZ}, we need to establish large deviation estimates and eliminate double resonances, which then completes the proof of localization. We do the former in Section~\ref{sec:largedeviations} and the latter in Section~\ref{sec:doubleresonances}. Finally, we discuss applications to concrete examples in Section~\ref{sec:examples}.

\section{The Setting and the Main Results}\label{sec:mainresults}

\subsection{The Base Dynamical System}\label{sss:basedynamics}

Let $\mathcal{A}=\{1, 2, \ldots, \ell\}$ with $\ell \ge 2$ be equipped with the discrete topology. Consider the product space $\mathcal{A}^\Z$, whose topology is generated by the cylinder sets, which are the sets of the form
$$
[n; j_0,\cdots,j_k] = \{ \omega \in \mathcal{A}^\Z : \omega_{n+i} = j_i , \; 0 \le i \le k \}
$$
with $n \in \Z$ and $j_0, \ldots , j_k \in \mathcal{A}$. The topology is metrizable and for definiteness we fix the following metric on $\mathcal{A}^\Z$. Set
$$
N(\omega,\tilde \omega)=\max\{N\ge 0: \omega_n=\tilde \omega_n \mbox{ for all } |n|<N\},
$$
and equip $\mathcal A^\Z$ with the metric $d$ defined by
$$
d(\omega , \tilde \omega) =e^{-N(\omega,\tilde \omega)}.
$$
We consider the left shift $T : \mathcal{A}^\Z \to \mathcal{A}^\Z$ defined by $(T \omega)_n = \omega_{n+1}$ for $\omega \in \mathcal{A}^\Z$ and $n \in \Z$. Let $\mathrm{Orb}(\omega)=\{T^n\omega:\ n\in\Z\}$ be the orbit of $\omega$ under the dynamics $T$.

\begin{definition}\label{def:localsets}
	Let $\Omega \subseteq \mathcal{A}^\Z$ be a subshift of finite type and consider the topological dynamical system $(\Omega,T)$.
	
	We say that a finite word $j_0 j_1 \ldots j_k$, where $j_i \in\{1,\ldots, \ell\}$ for $0 \le i \le k$, is \emph{admissible} if it occurs in some $\omega\in\Omega$, that is, there are $\omega\in\Omega$ and $n\in\Z$ such that $\omega_{n+i}=j_i$ for all $0\le i\le k$.
	
	The \textit{local stable set} of a point $\omega \in \Omega$ is defined by
	$$
	W^s_\mathrm{loc}(\omega) = \{ \tilde \omega \in \Omega : \omega_n = \tilde \omega_n  \text{ for } n \ge 0 \}
	$$
	and the \textit{local unstable set} of $\omega$ is defined by
	$$
	W^u_\mathrm{loc}(\omega) = \{ \tilde \omega \in \Omega : \omega_n = \tilde \omega_n \text{ for } n \le 0 \}.
	$$
	
	A set is called $s$-\textit{locally saturated} (resp., $u$-\textit{locally saturated}) if it is a union of local stable (resp., local unstable) sets of the form above.
	
	For each $j \in \mathcal{A}$ and each pair of points $\omega, \tilde \omega\in [0;j]$, we denote the unique point in $W^u_{\mathrm{loc}}(\omega)\cap W^s_{\mathrm{loc}}(\tilde\omega)$ by $\omega\wedge\tilde \omega$.
\end{definition}

\subsection{Measures With the Bounded Distortion Property}\label{sss:local_product}

Let the subshift $\Omega$ be equipped with the Borel $\sigma$-algebra and let $\mu$ be a probability measure on $\Omega$ that is ergodic with respect to $T$. We define
\begin{align*}
	&\Omega^+=\{(\omega_n)_{n\ge 0}: \omega\in\Omega\}\\
	&\Omega^-=\{(\omega_n)_{n\le 0}: \omega\in \Omega\}
\end{align*}
to be the spaces of one-sided right and left infinite sequences, respectively, associated with $\Omega$. Metrics for $\Omega^\pm$ can be defined in a way similar to the definition of the metric for $\CA^\Z$ above. Abusing notation slightly, we still let $d$ denote their metrics. Let $\pi^{+}$ be the projection from $\Omega$ to $\Omega^+$ and $\mu^+=\pi^+_*(\mu)$ be the pushforward measure of $\mu$ on $\Omega^+$. Similarly, we let  $\pi^-$ be the projection to $\Omega^-$ and $\mu^-$ be the pushforward measure on $\Omega^-$. Let $T_+$ be the left shift operator on $\Omega^+$ and $T_-$ be the right shift on $\Omega^-$. For $n\ge 0$, we let $[n;j_0, \ldots, j_k]^+$ denote the cylinder sets in $\Omega^+$; for $n \le -k$, we let $[n;j_0,\ldots, j_k]^-$ denote the cylinder sets in $\Omega^-$. Let $\omega^\pm$ denote points in $\Omega^\pm$, respectively.

For simplicity, for each $1\le j\le \ell$, we set $\mu_j=\mu|_{[0;j]}$ and $\Omega_j = \Omega \cap [0;j]$. Similarly, we set $\mu^\pm_j=\mu^\pm|_{[0;j]^\pm}$ and $\Omega^\pm_j=\Omega^\pm\cap[0;j]^\pm$, respectively.

Note that we do not have $\Omega=\Omega^-\times\Omega^+$. However, for each $1\le j\le \ell$, we have a natural homeomorphism:
$$
P: \Omega_j\to \Omega^-_j\times\Omega^+_j\mbox{ where } P(\omega)=(\pi^-\omega,\pi^+\omega).
$$
Thus, abusing the notation a bit, we may just write $\Omega_j=\Omega^-_j\times \Omega^+_j$. Moreover, we have for all $\omega \in \Omega$,
\beq\label{eq:local_su_set}
(\pi^+)^{-1}(\pi^+\omega)=W^s_{\mathrm{loc}}(\omega),\ (\pi^-)^{-1}(\pi^-\omega)=W^u_{\mathrm{loc}}(\omega).
\eeq

\begin{definition}
	We say $\mu$ has a \emph{local product structure} if there is a $\psi:\Omega\to\R_+$ such that for each $1\le j \le \ell$, $\psi\in L^1(\Omega_j,\mu^-_j\times\mu^+_j)$ and
	\beq\label{eq:localproduct}
	d\mu_j=\psi \cdot d(\mu^+_j\times \mu^-_j).
	\eeq
\end{definition}

If $\mu$ has a local product structure, then its topological support $\mathrm{supp} \, \mu$ is a subshift of finite type (see, e.g., \cite[Lemma~1.2]{BGV}) and hence, without loss of generality, we will assume throughout that the measure $\mu$ has full support in $\Omega$. For results concerning positivity of the LE, a local product structure is usually sufficient, see e.g. \cite{ADZ}. However, for further results such as large deviation estimates, we will need the measure $\mu$ to obey a quantitative version of local product structure, which is defined as follows.

\begin{definition}
	We say that $\mu$ satisfies the \textit{bounded distortion property} if there is $C \ge 1$ such that for all cylinders $[n;j_0,\ldots,j_{k}]$ and $[l;,i_{0},\ \ldots, j_{m}]$ in $\Omega$, where $l> n+k$ and $[n;j_0,\ldots,j_{k}]\cap [l;,i_{0},\ldots, i_{s}]\neq\varnothing$, we have
	\beq\label{eq:bdd}
	C^{-1} \le \frac{\mu \left( [n;j_0,\ldots,j_{k}]\cap [l;i_{0},\ldots, i_{s}] \right)}{\mu \left( [n;j_0,\ldots,j_{k}] \right) \cdot \mu \left( [l;i_{0},\ldots,i_{s}] \right)} \le C.
	\eeq
\end{definition}

The bounded distortion property implies local product structure; see, for example, \cite[Section 2.1]{ADZ}. Equilibrium states of H\"older continuous potentials have the bounded distortion property; compare, for example, \cite[Lemma 3.4]{ADZ}. It is a standard result that if an invariant measure $\mu$ of a $C^2$ transitive Anosov diffeomorphism (or a $C^2$ transitive, uniformly expanding map) is absolutely continuous with respect to the volume measure, then it is an equilibrium state of a H\"older continuous potential; see, for example, \cite{bowen2}. In particular, it includes hyperbolic endomorphisms on the $d$-dimensional torus, for all $d\ge1$, such as the doubling map on $\R/\Z$ or Arnold's cat map on $(\R/\Z)^2$, where the ergodic measure is simply taken to be the Lebesgue measure . It is also well-known that Markov chains with Markov measures have locally constant potentials, which are clearly H\"older continuous. 

Recall that  $(\Omega, T)$ is \emph{topologically mixing} if for any pair of nonempty open sets $U, V\subset\Omega$, there is an $N\ge 1$ such that $T^n(U)\cap V\neq\varnothing$ for all $n\ge N$. Following \cite[Section 2.1.4]{ADZ}, we can assume without loss of generality that $(\Omega, T)$ is topologically mixing in Section~\ref{sec:largedeviations}. Concretely, by the spectral decomposition theorem for hyperbolic basic sets, we can decompose $\Omega$ as $\Omega = \bigsqcup^{s}_{l=1}\Omega^{(l)}$ for some $s\ge 1$ and for closed subsets $\Omega^{(l)}$, so that the following holds true: $T(\Omega^{(l)})=\Omega^{(l+1)}$ for $1 \le l < s$, $T(\Omega^{(s)})=(\Omega^{(1)})$, and $T^s|\Omega^{(l)}$ is a topologically mixing subshift of finite type for each $1\le l\le s$. Moreover, the normalized restriction $\mu^{(l)}$ of $\mu$ to $\Omega^{(l)}$ is a $T^s$-invariant ergodic, fully supported probability measure with local product structure or bounded distortion property, provided the same property is true for $\mu$ on $\Omega$. 

For a cocycle map $A : \Omega \to \mathrm{SL}(2,\R)$ with $\|A\|_\infty\le\Gamma$, we consider $A_s : \Omega^{(l)} \to \mathrm{SL}(2,\R)$ as $A_s(\omega)$ (see \eqref{eq:A_n} below for the definition of $A_s$), which may be considered a cocycle map defined over the base dynamics $T^s : \Omega^{(l)} \to \Omega^{(l)}$. Clearly, $L(A_s, \mu^{(l)}) > 0$ for some $1 \le l \le s$ implies that $L(A,\mu) > 0$. Moreover, large deviation estimates on each $(T^s, \Omega^{(l)}, \mu^{(l)})$ imply large deviation estimates on $(T, \Omega, \mu)$ as follows. For $n\ge 1$, we may write $n=sk+r$ for some $0\le r<s$. Then for each $\omega\in\Omega$, 
\[
\|A_{r}(T^{ks}\omega)\|^{-1}\cdot\|A_{ks}(\omega)\|\le \|A_{n}(\omega)\|\le \|A_{r}(T^{ks}\omega)\|\cdot \|A_{ks}(\omega)\|.
\]
Hence for each $\varepsilon>0$, there is a $n_0=n_0(\e, \Gamma)$ such that for all $n\ge n_0$, we have
\[
\left|\frac1n\log \|A_n(\omega)\|-\frac{1}{ks}\log\|A_{ks}(\omega)\|\right|<\frac\e2.
\]
In particular, we have
\[
\left|\frac1n\log \|A_n(\omega)\|-L(E)\right|>\varepsilon\Rightarrow \left|\frac1k\log \|A_{ks}(\omega)\|-sL(E)\right|>\frac s2\varepsilon,
\]
which implies for all $n\ge n_0$ that
\[
\left\{\omega: \left|\frac1n\log \|A_n(\omega)\|-L(E)\right|>\varepsilon\right\}\subset \bigcup^s_{l=1}\left\{\omega\in\Omega^{(l)}: \left|\frac1k\log \|A_{ks}(\omega)\|-sL(E)\right|>\frac s2\varepsilon\right\}.
\]
Exponential decay of the measure of the set at the right side above is clearly a direct consequence of the LDT of $(T^s, A_s)$ on each $(T^s, \Omega^{(l)}, \mu^{(l)})$.

The main consequence of topological mixing that we will use is the following. There is $r_0 \in\Z_+$ so that for all $[k;j_0,\ldots, j_n] \subset \Omega$ and all $[l;i_0,\ldots, i_m] \in \Omega$, where $l-(k+n) \ge r_0$, we have
\beq\label{eq:nonempty_intersection}
[k;j_0,\ldots, j_n]\cap[l;i_0,\ldots, i_m]\neq\varnothing.
\eeq
\eqref{eq:nonempty_intersection} could be an easy consequence of the specification property of such systems; see, for example, \cite[Proposition 2.7]{ADZ}. In Section~\ref{sec:largedeviations}, for our $(\Omega,T,\mu)$, we let $r_0$ be a number satisfying \eqref{eq:nonempty_intersection}, which will be used in the proof of Lemma~\ref{l.cesaro1}.

\subsection{The Associated Operator Family}\label{sss:operators}

In this paper, we are mainly concerned with the Anderson localization phenomenon for one-dimensional discrete Schr\"odinger operators $H_\omega$ in $\ell^2(\Z)$ acting by
\beq\label{eq:operator}
[H_\omega u](n) = u(n+1) + u(n-1) + V_\omega(n) u(n).
\eeq
Here we assume $\Omega$ to be any compact metric space, $T:\Omega\to\Omega$ a homeomorphism, and $f : \Omega \to \R$ continuous. We consider potentials $V_\omega : \Z \to \R$ defined by $V_\omega(n) = f(T^n \omega)$ for $\omega \in \Omega$ and $n \in \Z$. Let 
\[
\sigma(H_\omega)=\{E: H_\omega-E \mbox{ does not have a bounded inverse}\}
\]
be the spectrum of $H_\omega$. Choose a $T$-ergodic measure $\mu$. Then there is a compact set $\Sigma$ such that $\sigma(H_\omega)=\Sigma$ for $\mu$-a.e. $\omega$. We say that $H_\omega$ has (spectral) \emph{Anderson localization} if it has pure point spectrum with exponentially decaying eigenfunctions.

A continuous map $A : \Omega \to \mathrm{SL}(2,\R)$ gives rise to the cocycle $(T,A) : \Omega \times \R^2 \to \Omega \times \R^2$, $(\omega , v) \mapsto (T \omega , A(\omega) v)$. For $n \in \Z$, we let $(T,A)^n = (T^n , A_n)$. In particular, we have
\begin{equation}\label{eq:A_n}
A_n(\omega)=
\begin{cases}
	A(T^{n-1}\omega) \cdots A(\omega),\ & n\ge 1;\\
	I_2, &n=0;\\
	[A_{-n}(T^n\omega)]^{-1}, \ &n\le -1,
\end{cases}
\end{equation}
where $I_2$ is the identity matrix. The Lyapunov exponent is given by
\begin{align*}
	L(A,\mu)
	& = \lim_{n \to \infty} \frac{1}{n} \int \log \|A_n(\omega)\| \, d\mu(\omega) \\
	& = \inf_{n \ge 1} \frac{1}{n} \int \log \|A_n(\omega)\| \, d\mu(\omega)\ge 0.
\end{align*}
By Kingman's subaddive ergodic theorem, we have
$$
\lim_{n \to \infty} \frac{1}{n}\log \|A_n(\omega)\| = L(A,\mu)
$$
for $\mu$-a.e. $\omega$. By linearity and invertibility of each $A(\omega)$, we can projectivize the second component and consider $(T,A) : \Omega \times \R\bbP^1 \to \Omega \times \R\bbP^1$.

Spectral properties of the operators $H_\omega$ can be investigated in terms of the behavior of the solutions to the difference equation
\begin{equation}\label{e.eve}
	u(n+1) + u(n-1) + V_\omega(n) u(n) = E u(n), \quad n \in \Z,
\end{equation}
with $E \in \R$. These solutions in turn can be described with the help of the Schr\"odinger cocycle $(T,A^E)$ with the cocycle map $A^{E}:\Omega\to\SL(2,\R)$ being defined as
$$
A^E(\omega) = A^{(E-f)}(\omega):= \begin{pmatrix} E-f(\omega) & -1 \\ 1 & 0 \end{pmatrix},
$$
where we often leave the dependence on $f:\Omega \to \R$ implicit as it will be fixed most of the time. Such cocycles describe the transfer matrices associated with Schr\"odinger operators with dynamically defined potentials. Specifically, $u$ solves \eqref{e.eve} if and only if
$$
\begin{pmatrix} u(n) \\ u(n-1) \end{pmatrix} = A^{E}_n(\omega) \begin{pmatrix} u(0) \\ u(-1) \end{pmatrix}, \quad n \in \Z.
$$
We set $L(E)=L(A^E,\mu)$ and let $\CZ_f=\{E: L(E)=0\}$.

\subsection{The Main Results}\label{sss:results}

As mentioned in the introduction, there are two key ingredients to a localization proof -- positivity of the Lyapunov exponent and large deviation estimates. Let us formulate precisely the two statements that are needed.

\begin{definition}\label{d:ple_uld}
	Let $I\subset \R$. We say $A^E$ has PLE on $I$ if 
	\[
	\inf_{E\in I}L(E)>0.
	\]
	We say $A^E$ has ULD on $I$ if for every $\varepsilon>0$, there are constants $C,c>0$, depending only on $\varepsilon$ and $f$, such that it for all $E \in I$ and  $n\ge 1$, we have
	\[
	\mu\big\{\omega\in\Omega: \big|\frac1n\log\|A^E_n(\omega)\|-L(E)\big|>\varepsilon\big\}<Ce^{-cn}.
	\]
\end{definition}

Since PLE may be quoted from \cite{ADZ}, our main tasks will be to establish ULD and then to deduce the desired localization results from PLE and ULD. In this paper, we shall show ULD for the following two classes of sampling functions. They are both subsets of $C(\Omega,\R)$ and they require quantitative control on the dependence of the function of entries of the input sequence that are far away from the origin.

The first class is the following, where there is in fact no dependence on entries that are sufficiently far away from the origin.

\begin{definition}
We say that $f:\Omega\to\R$ is \emph{locally constant} if there exists an $n_0 \in \Z_+$ such that for each $\omega\in\Omega$, $f(\omega)$ depends only on the cylinder set $[-n_0;\omega_{-n_0}, \ldots, \omega_{n_0}]$. In other words, $f$ is constant on each such cylinder set. Let us denote by $\LC$ the set of all locally constant $f:\Omega\to\R$.
\end{definition}
\medskip

\begin{remark}
(a) Clearly, any locally constant $f$ is Lipschitz continuous and hence $\LC \subset C^\alpha(\Omega,\R)$ for any $0<\alpha\le 1$, which is defined below.
\\[1mm]
(b)	We can similarly define locally constant cocycles $A:\Omega\to\SL(2,\R)$ and, abusing notation slightly, denote by $\LC$ the set of all such cocycles as well. Clearly, $f\in \LC$ implies that $A^E\in \LC$ for all $E$.
\end{remark}
	
The second class consists of sufficiently small H\"older continuous functions $f:\Omega\to\R$. We write $C^\alpha(\Omega,\R)$, $0<\alpha\le 1$ for the space of real-valued $\alpha$-H\"older continuous functions. That is,
\[
\sup_{\omega\neq \omega'}\frac{|f(\omega)-f(\omega')|}{d(\omega,\omega')^\alpha}<\infty.
\] 
Likewise, we can define the space of $\alpha$-H\"older continuous cocycles $C^\alpha(\Omega, \SL(2,\R))$.

\begin{definition}
	We say that $A \in C^\alpha (\Omega , \mathrm{SL}(2,\R))$ is \emph{fiber bunched} if there exists $N \ge 1$ such that for every $\omega \in \Omega$, we have
	\beq\label{eq:bunching2}
	\|A_{N}(\omega)\|^2 < e^{\alpha N}.
	\eeq
	Equivalently, there is $\theta < \alpha$ such that $\|A_{N}(\omega)\|^2 < e^{\theta N}$ for every $\omega \in \Omega$. 
	\end{definition}
	\medskip
	\begin{remark}
		As explained in \cite[Remark 7.5]{ADZ}, $A^E$ is fiber bunched for all $E\in [-2-\lambda_0,2+\lambda_0]\supset\Sigma_f$, provided $\|f\|_\infty < \lambda_0 := \frac{(e^{\frac\alpha2}-1)^2}{9}$. 
		\end{remark}

Hence, we make the following definition:
\begin{definition}\label{def:sh}
We say $f\in C^\alpha(\Omega,\R)$ is \emph{globally fiber bunched} if $\|f\|_\infty<\lambda_0$ and denote by $\SH$ the set of all such $f$.
\end{definition}

\medskip
	
	Suppose $T$ has a fixed point on $\Omega$ and $\mu$ is a $T$-ergodic measure that has a local product structure. Then the following holds true if $f \in \LC \cup \SH$. Let $\CF_f$ be the set of energies where $A^E$ has an $su$-state (see Section~\ref{ss:LDT} below for a detailed description of $su$-states). By \cite[Proposition 5.9]{ADZ}, $\CZ_f\subset\CF_f$ and  by \cite[Lemma 5.11]{ADZ}, $\CF_f$ is finite. By \cite[Theorem 2.8]{backes}, $L(E)$ is continuous on $\R$. Then for any compact interval $J$ and any $\eta>0$, we have PLE on
\begin{equation}	\label{e.jeta}
J_\eta:=J\setminus B(\CF_f,\eta), 
\end{equation}
which consists of a finite number of connected compact intervals. Here, $B(\CF_f,\eta)$ denotes the open $\eta$-neighborhood of $\CF_f$.

The first main result of the present paper addresses the ULD property:
	
\begin{theorem}\label{t:uld}
Let $\Omega$ be a subshift of finite type and $\mu$ be a $T$-ergodic measure that has the bounded distortion property. Assume that $T$ has a fixed point. Let $f\in \LC \cup \SH$ be nonconstant. Then there is a connected compact interval $J\supset\Sigma_f$ such that $A^{E}$ satisfies \emph{ULD} on $J_\eta$ for all $\eta>0$.
\end{theorem}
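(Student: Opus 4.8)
The plan is to prove the uniform large deviation estimate by combining two quite different mechanisms, one for energies where the Lyapunov exponent is bounded below and one near the exceptional set $\CF_f$, and to glue them together with the continuity of $L(E)$. Fix the compact interval $J \supset \Sigma_f$ (chosen so that $A^E$ is uniformly bounded and, in the $\SH$ case, fiber bunched for all $E \in J$). The first and easiest regime is $E \notin J$: there $H_\omega - E$ is invertible with uniform resolvent bounds, so $\frac1n \log\|A^E_n(\omega)\|$ converges uniformly and exponentially fast to $L(E)$ by a soft argument, and we may concentrate on $E \in J$.

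First I would establish ULD on the ``good'' set $J_\eta$ directly, exploiting that PLE holds there: $\inf_{E \in J_\eta} L(E) = \kappa > 0$. The standard route (following \cite{B, BoGo, BS}) is via an avalanche/subadditivity scheme. One shows an almost-subadditivity estimate for the functions $u_n(E,\omega) = \frac1n \log\|A^E_n(\omega)\|$ in $L^1(\mu)$: using the bounded distortion property, cut an orbit of length $m+n$ at a well-chosen site into an initial block of length $m$ and a tail block of length $n$ whose potentials are \emph{almost independent} (the distortion constant $C$ controls the Radon--Nikodym derivative between $\mu$ restricted to a cylinder and the product measure). This yields $\int u_{m+n} \le \int u_m + \int u_n + o(1)$ up to the distortion error, which combined with positivity and the uniform boundedness $\|A^E\|_\infty \le \Gamma$ gives, via a McDiarmid/Azuma-type concentration argument on the almost-product structure (or via the Avila--Bochi--Damanik abstract LDT machinery), the bound $\mu\{|u_n - L(E)| > \varepsilon\} < Ce^{-cn}$ with $C,c$ depending only on $\varepsilon, f$ and the distortion constant, \emph{uniformly} in $E \in J_\eta$ because all constants are uniform once $L(E) \ge \kappa$. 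The role of the fixed point of $T$ (hence of a periodic orbit, used via Lemma~\ref{l.cesaro1} and $r_0$ from \eqref{eq:nonempty_intersection}) is to supply a reference point around which to run the Cesàro/large-deviation comparison.

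Next, to upgrade the ULD constants to be \emph{uniform in $\eta$} — i.e.\ to get ULD on $J_\eta$ with $C,c$ independent of how small $\eta$ is, which is what the theorem as stated requires (for all $\eta>0$, with the implicit understanding that one fixed pair $C(\varepsilon),c(\varepsilon)$ works) — one must handle the energies with $L(E)$ small but positive, i.e.\ in the shrinking windows $B(\CF_f,\eta) \setminus B(\CF_f,\eta')$ as $\eta' \to 0$. Here the key input is that $\CF_f$ is finite (\cite[Lemma 5.11]{ADZ}), $L$ is continuous (\cite[Theorem 2.8]{backes}), and $L(E) > 0$ off the finite set $\CF_f$. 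The mechanism near an exceptional energy $E_0 \in \CF_f$ is that even though $L(E) \to 0$ as $E \to E_0$, the scale $n$ at which concentration kicks in degrades only polynomially/logarithmically in $L(E)$, and one absorbs this by noting that $\varepsilon$ is fixed: once $L(E) < \varepsilon/2$, the event $|u_n - L(E)| > \varepsilon$ forces $u_n > \varepsilon/2$, a \emph{one-sided} large deviation, and the upper large deviation estimate $\mu\{u_n > L(E) + \varepsilon\} < Ce^{-cn}$ is the ``easy'' half of the LDT: it follows from subadditivity of $\int u_n$ (Fekete) together with a uniform bound on exponential moments $\int \|A^E_n\|^s \, d\mu$ coming from boundedness of $A^E$ and the almost-product structure, with constants uniform over all $E \in J$ regardless of $L(E)$. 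The lower large deviation $\mu\{u_n < L(E) - \varepsilon\}$ is the only place positivity is genuinely needed, and there $L(E) \ge \kappa_\eta$ on $J_\eta$; but when $\varepsilon$ exceeds $2\sup_{J}L = 2\|L\|_{C(J)}$... more carefully: for $E \in J_\eta$ with $L(E)$ very small, the lower deviation event $u_n < L(E) - \varepsilon$ is, for $\varepsilon$ fixed and $n$ large, controlled by the same upper-tail estimate applied at a nearby comparison scale, so in fact a single pair $(C,c)$ depending only on $\varepsilon$ and $f$ suffices across all of $\bigcup_{\eta>0} J_\eta = J \setminus \CF_f$, which is exactly the claim.

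The main obstacle I anticipate is proving the lower large deviation estimate $\mu\{u_n < L(E) - \varepsilon\} < Ce^{-cn}$ with constants uniform as $E$ approaches $\CF_f$ from within $J_\eta$ (equivalently, as $L(E) \downarrow 0$): the standard proof of the lower tail uses positivity of the Lyapunov exponent in an essential, quantitative way (one needs the cocycle to ``generically grow'' so that failing to grow is exponentially costly), and the constants there typically blow up as $L(E) \to 0$. Overcoming this requires either (a) observing that for the \emph{fixed} $\varepsilon$ in the definition of ULD, once $L(E) \le \varepsilon/2$ the lower-tail event $\{u_n < L(E)-\varepsilon\}$ is empty because $u_n \ge 0$, so the estimate is vacuous there and one only needs $L(E) > \varepsilon/2$, on which compact-in-$E$ set positivity is uniform; or (b) a more delicate uniform-in-$E$ analysis of the stationary measures / $su$-states near $\CF_f$ using the bounded distortion property. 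Route (a) is cleaner and I expect it is what makes the ``for all $\eta > 0$'' in the statement harmless: the genuine content is ULD on $\{E \in J : L(E) > \varepsilon/2\}$ together with the trivial one-sided estimate elsewhere, and the former follows from the PLE-on-compact-subintervals argument of Step~2. The other technical nuisance is carefully controlling all distortion errors to be $o(1)$ uniformly in $E$ — routine given the bounded distortion constant $C$ and $\|A^E\|_\infty \le \Gamma$ on $J$, but it must be done with care in the almost-subadditivity step.
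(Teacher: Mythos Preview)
Your proposal misses the central mechanism of the paper's proof and, as written, does not close.

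The paper does \emph{not} argue via subadditivity of $\log\|A^E_n\|$. Instead it linearizes the problem by passing to the projective bundle $\Omega\times\R\bbP^1$: the subadditive quantity $\tfrac1n\log\|A^E_n(\omega)\|$ is replaced by the genuine Birkhoff average $\tfrac1n S^E_n(\varphi^E)(\omega,v)$ of the \emph{additive} observable $\varphi^E(\omega,v)=\log\|A^E(\omega)\vec v\|$ under the skew product $F^E$. The whole point of excising $B(\CF_f,\eta)$ is that for $E\in J_\eta$ the cocycle has a \emph{unique} $u$-state $m^{u,E}$ (Lemma~\ref{p:unique_u-state}); uniqueness, together with the Ces\`aro Lemmas~\ref{l.cesaro1}--\ref{l.cesaro2}, forces $\tfrac1n\sum_{k<n}(F^E)^k_*m\to m^{u,E}$ \emph{uniformly} over all admissible initial measures $m$ supported on a local unstable leaf. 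That uniform rate supplies the scale $N$ in \eqref{e.almostmartingale3}, after which Azuma's inequality (Lemma~\ref{l.azuma}), applied to the martingale of conditional expectations on the cylinder filtration, yields the exponential bound. Holonomies, $u$-states, and the reduction to cocycles depending only on the past are entirely absent from your sketch.

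Your ``almost-subadditivity $+$ McDiarmid/Azuma'' route has a genuine gap precisely where you flag it: the lower tail. Bounded distortion does give nearly independent blocks, but the increments $\log\|A^E_n\|-\log\|A^E_{n-1}\|$ are not bounded below in any useful way (two large $\SL(2,\R)$ matrices can multiply to something of norm close to $1$), so there is no martingale-difference structure on $\log\|A^E_n\|$ itself to which Azuma applies. The Bourgain--Goldstein avalanche you invoke rests on subharmonicity in a phase variable, unavailable for a hyperbolic base. What one needs is exactly a contraction/alignment statement on $\R\bbP^1$, and that is what uniqueness of the $u$-state encodes and what your plan never supplies.

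Two smaller corrections. First, you over-read the quantifiers: the theorem asserts ULD on $J_\eta$ for each fixed $\eta>0$, and in the paper's proof the constants do depend on $\eta$ (via compactness of $J_\eta$ in passing from Theorem~\ref{t:ldt_additive} to Theorem~\ref{t:ldt_loc-u-mfld}); your extended discussion of $\eta$-uniformity is superfluous, though your observation that $u_n\ge 0$ trivializes the lower tail once $L(E)<\varepsilon$ is correct. Second, the fixed-point hypothesis is not used in Lemma~\ref{l.cesaro1} (that lemma uses topological mixing through $r_0$); the fixed point enters only through the \cite{ADZ} inputs guaranteeing that $\CF_f$ is finite and $L>0$ on $J_\eta$.
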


\begin{remark}
Large deviation estimates in similar contexts were previously obtained by several authors, see for example \cite{duarteklein, gouezel, park}. In particular, \cite{duarteklein} proved a local uniform version, which enabled them to obtain H\"older continuity of the Lyapunov exponent. However, they all assumed certain typical conditions of the cocycles that were first introduced by \cite{BGV, BV}, while in the context of Theorem~\ref{t:uld}, we do not necessarily have these typical conditions for $A^E$, $E\in J_\eta$. Indeed, typical conditions were used in \cite{BGV, BV} to prove positivity of the Lyapunov exponent (or simplicity of the Lyapunov spectrum in case of higher dimensional cocycles), whereas the proof of positivity of the Lyapunov exponent on $J_\eta$ in \cite{ADZ} does not use any perturbation argument. What \cite{ADZ} uses instead is a certain analyticity argument together with certain aspects of inverse spectral theory. In fact, Theorem~\ref{t:uld} is stronger than the previous results in the sense that the typical conditions of \cite{BGV, BV} imply the uniqueness of the $u$-state, which is the condition needed to prove our ULD. Although we do not pursue a local uniform version of LDT in the space of $C^\alpha(\Omega, \mathrm{SL}(2,\R))$, our proof does imply that without any extra work since uniformity is a direct consequence of the uniqueness of the $u$-state. We refer the reader to Section~\ref{ss:LDT} for a more detailed description. Moreover, our techniques, which are different from those of the previous works, can be further developed to obtain stronger results concerning ULD which we shall explore in the third paper of this series.
	\end{remark}
	
Our second main result deduces localization from PLE and ULD:

\begin{theorem}\label{t:main}
	 Let $\Omega$ be a subshift of finite type and $\mu$ be a $T$-ergodic measure that satisfies the bounded distortion property. Let $f\in C^\alpha(\Omega,\R)$. Let $I\subset \R$ on which we have \emph{PLE} and \emph{ULD}. Then, $H_\omega$ has spectral localization on $I$ for $\mu$-almost every $\omega\in\Omega$.
\end{theorem}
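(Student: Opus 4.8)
\textbf{Proof proposal for Theorem~\ref{t:main}.}

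The plan is to run the classical one-dimensional localization machinery in the spirit of \cite{B, BoGo, BS}, with the PLE and ULD hypotheses playing the role that positivity of the Lyapunov exponent and the large deviation theorem play there. The starting point is Schnol's theorem: to prove spectral localization on $I$ it suffices to show that for $\mu$-a.e.\ $\omega$, every polynomially bounded solution $u$ of \eqref{e.eve} with $E \in I$ (in fact every generalized eigenvalue, which forms a spectrally full set) decays exponentially. The transfer matrices $A^E_n(\omega)$ control such solutions, so the goal is to prove that for a.e.\ $\omega$, for every $E \in I$ for which a polynomially bounded solution exists, the cocycle contracts exponentially along that solution at $\pm\infty$. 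The engine driving this is the large deviation estimate combined with the Green's function expansion, via the avalanche principle or an explicit $3\times 3$ determinant argument relating $|G^{[a,b]}_E(x,y)|$ to $\|A^E_n\|$.

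The key steps, in order, are as follows. \textbf{Step 1: Uniform LDT along the orbit and Green's function estimates.} From ULD on $I$ (uniform in $E\in I$) together with PLE, one deduces that for each scale $n$ there is an exceptional set of $\omega$ of exponentially small $\mu$-measure outside of which $\|A^E_n(\omega)\| \ge e^{(L(E)-\varepsilon)n}$ for all $E \in I$; by \cite{backes} $L$ is continuous on the compact set $I$ so $L(E)\ge 2\gamma>0$ uniformly, and one fixes $\varepsilon \ll \gamma$. Translating via the standard transfer-matrix/Green's-function identity, this gives that on boxes $[x+1,x+n]$ the finite-volume Green's function satisfies $|G^{[x+1,x+n]}_E(x+1,x+n)| \le e^{-(\gamma)n}$ and pointwise decay $|G^{[x+1,x+n]}_E(x+1,y)|,\,|G_E(y,x+n)| \le e^{-\gamma|y-\cdot|}$ for $\omega$ outside the bad set; call an interval where this holds ``$(E,\gamma)$-good.'' \textbf{Step 2: Double resonance elimination.} This is where the bounded distortion property and the companion Section~\ref{sec:doubleresonances} (eliminate double resonances) enter. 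One shows that for $\mu$-a.e.\ $\omega$, for every $E\in I$, it is not possible to have two disjoint ``bad'' (i.e.\ singular, non-good) intervals both near the origin and far out along the orbit at the same energy; more precisely, using a Borel--Cantelli / Wegner-type argument driven by the measure-theoretic independence furnished by \eqref{eq:bdd}, the set of $E$ for which resonances occur at two well-separated scales is avoided for a.e.\ $\omega$. \textbf{Step 3: From no double resonances to exponential decay.} Given a generalized eigenvalue $E\in I$ and its polynomially bounded solution $u$, normalize $\binom{u(0)}{u(-1)}$. If $0$ were ``far'' from every singular box, the good-interval Green's function expansion (Poisson formula $u(x) = -G^{[a,b]}_E(x,a)u(a-1) - G^{[a,b]}_E(x,b)u(b+1)$ applied on a chain of good boxes) would force $|u(x)|$ to decay exponentially as $|x|\to\infty$, contradicting neither polynomial boundedness nor the eigenfunction property. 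The only obstruction is that $0$ itself lies in a singular region; but then by Step 2 every other scale is good, and one propagates decay from a good box surrounding any large $x$ back toward a fixed-size neighborhood of the origin, again via the Poisson formula and the pointwise Green's bounds, obtaining $|u(x)| \le C e^{-\gamma' |x|}$. \textbf{Step 4: Assemble.} Since the set of generalized eigenvalues is a spectral support of the restriction of $H_\omega$ to the spectral subspace corresponding to $I$, and each such eigenvalue now carries an exponentially decaying eigenfunction, the spectrum of $H_\omega$ in $I$ is pure point with exponentially decaying eigenfunctions for $\mu$-a.e.\ $\omega$; this is precisely spectral localization on $I$.

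The main obstacle is \textbf{Step 2}, the elimination of double resonances. Unlike in the i.i.d.\ Anderson model, where independence of $V_\omega(n)$ for distinct $n$ makes the resonant energy at a box near the origin and the resonant energy at a far box genuinely independent random variables, here the potential is $V_\omega(n) = f(T^n\omega)$ with $f$ only H\"older (or locally constant) and $T$ hyperbolic, so one must extract the needed decoupling from the bounded distortion property \eqref{eq:bdd} of $\mu$ together with the mixing/specification structure recorded around \eqref{eq:nonempty_intersection}. Concretely, one needs a Wegner-type estimate showing that $\mu$-probabilistically the spectra of the two finite-volume restrictions (near $0$ and near $T^N\omega$) are unlikely to be within $e^{-cn}$ of each other, uniformly in the far configuration; this requires carefully conditioning on the relevant cylinder and using \eqref{eq:bdd} to compare the conditional law with the unconditional one, then a Borel--Cantelli argument over $N$ and a net of energies in $I$ (using the Lipschitz dependence of eigenvalues on $E$ and a polynomial-in-$n$ Cartan/Wegner bound on the number of resonant energies). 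This is carried out in Section~\ref{sec:doubleresonances}, so in the present proof I would cite those lemmas and focus the argument on Steps 1, 3, and 4, which are the more standard deterministic parts of the scheme.
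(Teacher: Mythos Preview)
Your outline is essentially the paper's: Green's function bounds from ULD (Corollary~\ref{cor:LEandGreenEst_general}), elimination of double resonances using bounded distortion (Proposition~\ref{prop:doubleRes}), then the Poisson/Schnol argument imported from \cite{bucaj2}. Two points where your description diverges from what the paper actually does are worth flagging.

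First, the double-resonance event is \emph{not} ``two finite-volume spectra within $e^{-cn}$ of each other,'' and no Wegner-type estimate is used. The bad event $\mathcal D_N$ is that a box near position $s$ has large resolvent norm (hence, by the spectral theorem, a nearby eigenvalue $E_0$) \emph{and} the transfer matrix at a far position $s+r$ has an LDT failure at that same energy. One partitions into cylinders $[-K^2;\vec l]$ of length $2K^2+1$, which approximately pin down $E_0$ by H\"older continuity of $f$, reduces to an $s$-locally saturated event, and then uses the one-sided bounded-distortion lemma (Lemma~\ref{l:bdd+}) to decouple the near cylinder from the far LDT event, applying ULD at the fixed energy $E_0$. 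No comparison of two spectra, and no density-of-states regularity, enters.

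Second, to get the deterministic (a.e.\ $\omega$, all shifts $s_0$, all large $n$, all $E\in I$) upper bound on $\|A^E_n(T^{s_0}\omega)\|$ needed for the Green's function numerator, the paper does not go directly from ULD; it first proves an \emph{averaged} LDT (Lemma~\ref{l:additiveLDT}) via a reduction of $g_n$ to the one-sided shift (equations \eqref{eq:reductionh}--\eqref{eq:bddgn+}) followed by a martingale difference argument (Lemma~\ref{l.additiveldt+2}), and it uses H\"older continuity of $L$ --- derived from PLE+ULD via \cite{bucaj2}, not merely continuity from \cite{backes} --- to pass to a finite net of energies. These intermediate steps are what make the Borel--Cantelli in Proposition~\ref{prop:meanLDTshifted} summable, and without them your Step~1 as stated does not quite close.
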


\begin{remark}
In fact, our proof implies exponential dynamical localization of $H_\omega$ on $I$ for $\mu$-a.e. $\omega$ which is known to be stronger than the spectral localization. More concretely, for any  $\mu$-a.e. $\omega$, all $\varepsilon > 0$, and $0 < \beta < \gamma=\inf_{E\in I}L(E)$, there is a constant $\widetilde C = \widetilde C_{\omega,\beta,\varepsilon} > 0$ such that
\begin{equation}\label{eq:edl}
	\sup\limits_{t \in \R}
	|\langle\delta_n, e^{-itH_\omega}P_{I,\omega}\delta_m\rangle|
	\leq
	\widetilde C e^{\epsilon |m|} e^{-\beta|n-m|}
\end{equation}
for all $m,n \in \Z$, where $P_{I,\omega}$ denotes the spectral projection of $H_\omega$ to $I$.
\end{remark}

Corollaries~\ref{t:locally_constant} and ~\ref{t:fiber_bunched} below are direct consequences of Theorems~\ref{t:uld} and ~\ref{t:main}. We state them separately to facilitate reference to one of them.

\begin{coro}\label{t:locally_constant}
Let $(\Omega, \mu)$ be as in Theorem~\ref{t:uld}. If $f \in \LC$ is nonconstant, then for $\mu$-almost every $\omega\in\Omega$, $H_\omega$ has full spectral localization.
	\end{coro}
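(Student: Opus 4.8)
The plan is to deduce the statement directly from Theorems~\ref{t:uld} and~\ref{t:main}, after checking that all hypotheses are satisfied and then disposing of the finitely many exceptional energies. The hypotheses imposed on $(\Omega,T,\mu)$ are precisely those of Theorem~\ref{t:uld} (subshift of finite type, $T$-ergodic $\mu$ with the bounded distortion property, and $T$ with a fixed point), and since any locally constant $f$ is Lipschitz, we have $f\in C^\alpha(\Omega,\R)$ for every $0<\alpha\le 1$, so the hypotheses of Theorem~\ref{t:main} are met as well.

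First I would apply Theorem~\ref{t:uld} to the nonconstant $f\in\LC\subset\LC\cup\SH$, obtaining a connected compact interval $J\supset\Sigma_f$ such that $A^E$ satisfies ULD on $J_\eta=J\setminus B(\CF_f,\eta)$ for every $\eta>0$. By the discussion preceding Theorem~\ref{t:uld} --- namely $\CZ_f\subset\CF_f$ (\cite[Proposition 5.9]{ADZ}), finiteness of $\CF_f$ (\cite[Lemma 5.11]{ADZ}), and continuity of $E\mapsto L(E)$ (\cite[Theorem 2.8]{backes}) --- one has $L(E)>0$ on $J\setminus\CF_f$, hence $\inf_{E\in J_\eta}L(E)>0$ by compactness; that is, $A^E$ has PLE on each $J_\eta$. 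Theorem~\ref{t:main} applied with $I=J_\eta$ then yields a $\mu$-full-measure set $\Omega_\eta\subseteq\Omega$ such that $H_\omega$ has spectral localization on $J_\eta$ for every $\omega\in\Omega_\eta$.

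Next I would set $\Omega_0=\bigcap_{k\ge 1}\Omega_{1/k}$, which still has full $\mu$-measure, and use that $\bigcup_{k\ge 1}J_{1/k}=J\setminus\CF_f$ (since $\CF_f$ is finite, hence closed). Thus, for every $\omega\in\Omega_0$, each eigenvalue of $H_\omega$ lying in $J\setminus\CF_f$ possesses an exponentially decaying eigenfunction. To upgrade ``localization on $J\setminus\CF_f$'' to \emph{full} spectral localization, it remains to discard $\CF_f$: for each fixed $E_0\in\CF_f$, the set $\{\omega:E_0\in\sigma_{\mathrm{pp}}(H_\omega)\}$ is $T$-invariant, hence of measure $0$ or $1$ by ergodicity, and the value $1$ is excluded by the standard Birkhoff argument (a normalized eigenfunction $\psi_\omega$ at $E_0$ satisfies $|\psi_{T^n\omega}(0)|^2=|\psi_\omega(n)|^2$, so $\int|\psi_\omega(0)|^2\,d\mu=\lim_N\frac1N\sum_{n=0}^{N-1}|\psi_\omega(n)|^2=0$ because $\sum_{n\in\Z}|\psi_\omega(n)|^2=1$, forcing $\psi_\omega(0)=0$ a.e.\ and, iterating in $n$, $\psi_\omega\equiv 0$). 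Removing from $\Omega_0$ the finite union of these $\mu$-null sets and using that $\sigma(H_\omega)=\Sigma_f\subset J$ for a.e.\ $\omega$, we conclude that for $\mu$-a.e.\ $\omega$ the spectral measure of $H_\omega$ charges no point of $\CF_f$ and is pure point with exponentially decaying eigenfunctions on all of $J\setminus\CF_f$, i.e.\ $H_\omega$ has full spectral localization.

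Since this is a formal corollary, I do not anticipate a genuine obstacle: essentially all the work is in Theorems~\ref{t:uld} and~\ref{t:main}. The only step that is not pure bookkeeping is the almost-sure absence of $\ell^2$-eigenfunctions at the finitely many energies in $\CF_f$, and that is the classical ergodicity argument recalled above, requiring no new input; in particular, since no uniformity of the exponential decay rate across eigenvalues is needed, the passage to the limit $\eta\downarrow 0$ is harmless.
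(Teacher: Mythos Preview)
Your proposal is correct and follows essentially the same route as the paper's own proof in Section~\ref{sec:examples}: apply Theorems~\ref{t:uld} and~\ref{t:main} on each $J_{1/k}$, intersect the resulting full-measure sets, and then dispose of the finitely many energies in $\CF_f$ by showing that a fixed energy is $\mu$-almost surely not an eigenvalue. The only cosmetic difference is that the paper cites Oseledec's theorem or \cite{pastur1980} for this last step, whereas you sketch the Birkhoff/Pastur argument directly (and you should write ``$E_0$ is an eigenvalue of $H_\omega$'' rather than ``$E_0\in\sigma_{\mathrm{pp}}(H_\omega)$'', since these differ in general, though your argument indeed establishes the former, which is what is needed).
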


\begin{coro}\label{t:fiber_bunched}
Let $(\Omega, \mu)$ be as in Theorem~\ref{t:uld}. If $f\in \SH$ is nonconstant, then for $\mu$-almost every $\omega\in\Omega$, $H_\omega$ has full spectral localization.
	\end{coro}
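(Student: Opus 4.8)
The plan is to read off Corollary~\ref{t:fiber_bunched} from Theorem~\ref{t:uld}, Theorem~\ref{t:main}, and the positivity discussion preceding Theorem~\ref{t:uld}, the only genuine work being an exhaustion argument as the exceptional set shrinks together with the removal of finitely many energies. Since $f\in\SH\subset\LC\cup\SH$ is nonconstant and $(\Omega,\mu)$ is as in Theorem~\ref{t:uld} — so $\mu$ has the bounded distortion property, hence a local product structure, and $T$ has a fixed point — Theorem~\ref{t:uld} supplies a connected compact interval $J\supset\Sigma_f$ such that $A^E$ satisfies ULD on $J_\eta=J\setminus B(\CF_f,\eta)$ for every $\eta>0$. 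For this same $J$ and every $\eta>0$, the discussion around \eqref{e.jeta} — using $\CZ_f\subset\CF_f$ and the finiteness of $\CF_f$ from \cite[Proposition~5.9 and Lemma~5.11]{ADZ}, together with continuity of $L(\cdot)$ from \cite[Theorem~2.8]{backes} — yields PLE on $J_\eta$. Since moreover $f\in C^\alpha(\Omega,\R)$, Theorem~\ref{t:main} applies with $I=J_\eta$ and gives that $H_\omega$ has spectral localization on $J_\eta$ for every $\omega$ outside a $\mu$-null set $\Omega_\eta$.

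Next, fix a sequence $\eta_k\downarrow 0$ and put $\Omega_0=\bigcup_{k\ge 1}\Omega_{\eta_k}$, still $\mu$-null. Since $\CF_f$ is finite, hence closed, $\bigcup_k J_{\eta_k}=J\setminus\CF_f$, so for $\omega\notin\Omega_0$, collecting the exponentially decaying eigenfunctions furnished on each $J_{\eta_k}$ shows that the restriction of the spectral measure of $H_\omega$ to $J\setminus\CF_f$ is pure point with exponentially decaying eigenfunctions. It remains to dispose of the finitely many energies in $\CF_f$. Here I would invoke the standard fact for ergodic Schr\"odinger operators that, for each fixed $E\in\R$, the energy $E$ is $\mu$-a.s.\ not an eigenvalue of $H_\omega$ — a short covariance-plus-ergodicity argument, since the trace of the spectral projection onto $\{E\}$ is a.s.\ a finite constant and a summation over $\Z$ forces it to vanish. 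Applying this to each of the finitely many points of $\CF_f$ and enlarging $\Omega_0$ by a further $\mu$-null set, we may assume the spectral measure of $H_\omega$ assigns no mass to $\CF_f$. Writing $\Sigma_f=(\Sigma_f\setminus\CF_f)\cup(\Sigma_f\cap\CF_f)$, with the first piece contained in $J\setminus\CF_f$ and the second carrying no spectral mass, we conclude that for $\mu$-a.e.\ $\omega$ the operator $H_\omega$ has pure point spectrum with exponentially decaying eigenfunctions, i.e.\ full spectral localization.

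The substantive inputs — ULD for globally fiber bunched sampling functions and the implication PLE $+$ ULD $\Rightarrow$ localization — are exactly Theorems~\ref{t:uld} and \ref{t:main}, so the corollary itself is essentially bookkeeping. The one point deserving care is that the constants $C,c$ in the ULD estimate on $J_{\eta_k}$ (and likewise the lower bound in PLE) degrade as $\eta_k\to 0$, so one cannot obtain localization in a single step all the way up to $\CF_f$; the exhaustion over $k$ combined with the $\mu$-a.s.\ absence of eigenvalues at the finitely many energies of $\CF_f$ bridges this gap, and I expect this to be the only mild obstacle.
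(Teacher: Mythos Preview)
Your proposal is correct and follows essentially the same approach as the paper: apply Theorems~\ref{t:uld} and~\ref{t:main} on $J_\eta$, exhaust over a sequence $\eta_k\downarrow 0$ (the paper uses $\eta=1/n$), and then discard the finitely many energies in $\CF_f$ using the standard fact that a fixed energy is $\mu$-a.s.\ not an eigenvalue. The only cosmetic difference is that the paper cites Oseledec's theorem or \cite{pastur1980} for this last fact, whereas you sketch the covariance-plus-ergodicity trace argument; both are valid justifications of the same standard result.
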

	
In the same spirit, for ease of reference, let us formulate the following sample applications to specific base dynamics.

	\begin{coro}\label{cor:doubling}
		Consider $T:(\R/\Z)^m\to(\R/\Z)^m$ where $T$ is the doubling map if $m=1$ or Arnold's cat map if $m=2$. Let $\mu$ be the Lebesgue measure. Let $f:(\R/\Z)^m\to\R$ be H\"older continuous and nonconstant, and consider the potentials given  by $V_\omega(n)=\lambda f(T^n\omega)$. Then there is a $\lambda_0 > 0$ such that for all $0<\lambda \le\lambda_0$, $H_{\omega}$ has full spectral localization for $\mu$-a.e. $\omega\in(\R/\Z)^m$.
		\end{coro}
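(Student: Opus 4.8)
The plan is to deduce Corollary~\ref{cor:doubling} directly from Corollaries~\ref{t:locally_constant} and~\ref{t:fiber_bunched}, so the work is purely a matter of verifying that the doubling map and Arnold's cat map fit into the framework of Theorem~\ref{t:uld}. The first step is to pass from the smooth models on $(\R/\Z)^m$ to symbolic models. For $m=1$ the doubling map $x\mapsto 2x$ is (after removing a measure-zero set of dyadic rationals) conjugate to the full one-sided shift on two symbols; to fit the two-sided setting of Section~\ref{sss:basedynamics} one works with the natural extension, which is a subshift of finite type (in fact the full shift on $\mathcal A^\Z$ with $\mathcal A = \{1,2\}$), and Lebesgue measure pulls back to the Bernoulli measure $(\tfrac12,\tfrac12)^{\Z}$. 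For $m=2$, Arnold's cat map is a transitive Anosov diffeomorphism of the torus, so by the standard construction of Markov partitions (Bowen, Sinai) it is measurably conjugate to a topologically mixing subshift of finite type $(\Omega,T)$, and the Lebesgue measure, being the smooth invariant measure, pulls back to the measure of maximal entropy / an equilibrium state of a H\"older potential. In either case, as remarked after the definition of bounded distortion in the excerpt, the resulting $\mu$ has the bounded distortion property, and it is fully supported and ergodic; moreover $T$ has a fixed point (the point $0\in(\R/\Z)^m$, which corresponds to a fixed symbolic sequence). So the hypotheses on $(\Omega,T,\mu)$ in Theorem~\ref{t:uld} are met.

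The second step handles the sampling function. We are given $f:(\R/\Z)^m\to\R$ H\"older continuous and nonconstant; under the conjugacy it induces a function on $\Omega$, and since the conjugacy is H\"older (this is part of the Markov partition construction, using the hyperbolicity of the cat map; for the doubling map it is immediate) the induced function is again $\alpha'$-H\"older on $\Omega$ for some $0<\alpha'\le 1$, and it is still nonconstant. Write $\lambda_0 = \lambda_0(\alpha') = \frac{(e^{\alpha'/2}-1)^2}{9}$ as in Definition~\ref{def:sh}; replacing $\lambda_0$ by $\min\{\lambda_0, \lambda_0/\|f\|_\infty\}$ (using $f$ nonconstant so $\|f\|_\infty>0$) we get that for all $0<\lambda\le\lambda_0$ the scaled sampling function $\lambda f$ satisfies $\|\lambda f\|_\infty<\lambda_0(\alpha')$, i.e. $\lambda f\in\SH$. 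Then Corollary~\ref{t:fiber_bunched} applies verbatim to the potential $V_\omega(n)=\lambda f(T^n\omega) = (\lambda f)(T^n\omega)$, giving full spectral localization of $H_\omega$ for $\mu$-a.e.\ $\omega\in\Omega$; transporting back through the conjugacy yields the statement for Lebesgue-a.e.\ $\omega\in(\R/\Z)^m$.

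I do not expect any serious obstacle here, since the corollary is explicitly advertised as a ``direct consequence'' of the two preceding corollaries; the only points requiring a word of care are (i) the H\"older regularity of the conjugacy produced by the Markov partition for the cat map, so that $f\in C^\alpha((\R/\Z)^2)$ really does transfer to some $C^{\alpha'}(\Omega)$ — here one should note that the coding map is H\"older with exponent depending on the hyperbolicity constants, which may force $\alpha'<\alpha$, but this only shrinks $\lambda_0$ and does not affect the conclusion — and (ii) the identification of Lebesgue measure on the torus with an equilibrium state of a H\"older potential on $\Omega$, which is precisely the classical fact cited in the excerpt (see \cite{bowen2}) and hence yields the bounded distortion property. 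One should also remark that the measure-zero sets lost in passing to the symbolic model (dyadic rationals, boundaries of Markov rectangles) are Lebesgue-null, so ``$\mu$-a.e.\ $\omega$'' on $\Omega$ is equivalent to ``Lebesgue-a.e.\ $\omega$'' on $(\R/\Z)^m$. With these remarks in place the proof is complete.
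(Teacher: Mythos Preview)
Your approach is essentially the paper's for the cat map: code via a Markov partition, check that Lebesgue becomes an equilibrium state (hence has bounded distortion), and apply Corollary~\ref{t:fiber_bunched}. That part is fine.

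For the doubling map you diverge from the paper, and there is a subtle gap. The paper does \emph{not} pass to the natural extension; instead it invokes Remark~\ref{rem:halflineops}, i.e., it runs the entire PLE/ULD/localization argument directly for half-line operators over the non-invertible system $(\R/\Z,2x,\mathrm{Leb})$. Your route lifts to the full two-sided $2$-shift and applies Corollary~\ref{t:fiber_bunched}, which yields Anderson localization for the \emph{whole-line} operators $H_{\hat\omega}$ on $\ell^2(\Z)$ for a.e.\ $\hat\omega$ in the extension. But the statement of Corollary~\ref{cor:doubling} (as the paper interprets it) is about the \emph{half-line} operators $H_\omega^+$ on $\ell^2(\Z_+)$ determined by $\omega\in\R/\Z$, since $T^n\omega$ is only defined for $n\ge 0$. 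Whole-line localization for a.e.\ $\hat\omega$ does not formally imply half-line localization for a.e.\ $\omega=\pi(\hat\omega)$: the half-line restriction carries a boundary condition at $0$, and passing from one spectral statement to the other needs an argument you have not supplied. The cleanest fix is exactly the paper's: note that the cocycle, and hence PLE and ULD on $J_\eta$, are identical for the one- and two-sided models (the lifted sampling function $f\circ\pi$ depends only on the future), and then invoke the half-line analogue of Theorem~\ref{t:main} as in Remark~\ref{rem:halflineops}. Once you say that, your argument is complete; but ``transporting back through the conjugacy'' as written skips over this point.

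One minor point: your threshold $\min\{\lambda_0,\lambda_0/\|f\|_\infty\}$ with $\lambda\le\lambda_0$ allows equality, whereas $\SH$ requires the strict inequality $\|\lambda f\|_\infty<\lambda_0(\alpha')$; simply take any $\lambda_0<\lambda_0(\alpha')/\|f\|_\infty$.
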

		
		\begin{coro}\label{cor:markov}
			  Let $(\Omega, T, \mu)$ be a Markov chain with a Markov measure $\mu$. Suppose $T$ has a fixed point and $f \in \LC$ is nonconstant. Consider the potentials given  by $V_\omega(n)=\lambda f(T^n\omega)$. Then for all $\lambda>0$, $H_{\omega}$ has full spectral localization for $\mu$-a.e. $\omega$.
			\end{coro}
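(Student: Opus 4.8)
The plan is to obtain Corollary~\ref{cor:markov} from Corollary~\ref{t:locally_constant} --- equivalently, from Theorems~\ref{t:uld} and~\ref{t:main} --- by verifying that a Markov chain carrying a Markov measure, under the stated hypotheses, fits the setting of Theorem~\ref{t:uld}. The factor $\lambda>0$ plays no role: if $f\in\LC$ is nonconstant then so is $g:=\lambda f$, and the potential in question is $V_\omega(n)=g(T^n\omega)$, so it suffices to prove the statement with $g$ in place of $\lambda f$. Thus the only point that needs genuine verification is that a Markov measure satisfies the bounded distortion property \eqref{eq:bdd}; the remaining hypotheses of Theorem~\ref{t:uld} are either assumed ($T$ has a fixed point) or immediate, once one notes that $\Omega=\mathrm{supp}\,\mu$ is an irreducible subshift of finite type and that the presence of a fixed point (a loop of length one in the transition graph) makes the chain aperiodic, so that $(\Omega,T)$ is topologically mixing and $\mu$ is ergodic.

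For the bounded distortion property I would argue in either of two equivalent ways. Conceptually, a Markov measure is the equilibrium state of the potential $\omega\mapsto\log P_{\omega_0\omega_1}$, where $P$ is the stochastic transition matrix; this potential is locally constant, hence H\"older continuous, so \eqref{eq:bdd} follows from the facts recalled in Section~\ref{sss:local_product}. Concretely, writing $\mu([n;j_0,\ldots,j_k])=p_{j_0}P_{j_0j_1}\cdots P_{j_{k-1}j_k}$ with $p$ the stationary vector, one computes that for $l>n+k$ the ratio in \eqref{eq:bdd} equals $(P^{\,l-n-k})_{j_k i_0}/p_{i_0}$; since $p$ has strictly positive entries, since $(P^{\,l-n-k})_{j_k i_0}>0$ whenever the two cylinders intersect (as $\mu$ has full support), and since $P$ is primitive so that $(P^m)_{ab}\to p_b$ uniformly as $m\to\infty$, this ratio lies in $[C^{-1},C]$ for a constant $C$ depending only on the chain.

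With the hypotheses in hand, I would then invoke Corollary~\ref{t:locally_constant}; unpacking it, Theorem~\ref{t:uld} furnishes a connected compact interval $J\supset\Sigma_g$ with ULD on $J_\eta$ for every $\eta>0$, the discussion preceding Theorem~\ref{t:uld} supplies PLE on $J_\eta$ (bounded distortion gives a local product structure, $\CF_g$ is finite with $\CZ_g\subset\CF_g$ by \cite[Proposition~5.9]{ADZ} and \cite[Lemma~5.11]{ADZ}, and $L(\cdot)$ is continuous by \cite[Theorem~2.8]{backes}), and Theorem~\ref{t:main} then yields spectral localization on $J_\eta$ for $\mu$-a.e.\ $\omega$. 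Letting $\eta\downarrow 0$ along a sequence and intersecting the resulting full-measure sets gives, for $\mu$-a.e.\ $\omega$, pure point spectrum with exponentially decaying eigenfunctions on $J\setminus\CF_g\supset\Sigma_g\setminus\CF_g$; since $\CF_g$ is finite and a fixed energy is $\mu$-almost surely not an eigenvalue of $H_\omega$ (continuity of the one-dimensional integrated density of states), this amounts to full spectral localization on $\sigma(H_\omega)=\Sigma_g$ for $\mu$-a.e.\ $\omega$. As $\lambda>0$ was arbitrary, the proof is complete.

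I do not expect a serious obstacle, this being a corollary of the main theorems; the only step requiring any thought is the bounded distortion verification, and even there the task reduces to recalling standard Gibbs-measure facts, or to the short primitive-matrix estimate indicated above.
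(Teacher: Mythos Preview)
Your proposal is correct and follows essentially the same route as the paper. The paper's own proof is a single sentence declaring Corollary~\ref{cor:markov} to be a special case of Corollary~\ref{t:locally_constant}, relying on the earlier remark in Section~\ref{sss:local_product} that Markov measures are equilibrium states of locally constant (hence H\"older) potentials and therefore enjoy the bounded distortion property; you spell out this verification and the subsequent unpacking of Corollary~\ref{t:locally_constant} in more detail, but the logic is the same.
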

			
			\begin{remark}
				We wish to point out that according to Definition~\ref{def:sh} in Corollary~\ref{t:fiber_bunched}, $f\in \SH$ simply means $f\in C^\alpha(\Omega,\R)$ and $\|f\|_\infty\le \lambda_0$, where
				$$
				\lambda_0=\frac{(e^\frac\alpha2-1)^2}{9}.
				$$
				But in Corollary~\ref{cor:doubling} the $\lambda_0$ might be different due to the coding process that converts the smooth hyperbolic systems to their corresponding subshifts of finite type. Take the doubling map for example, by \cite[Remark 7.4]{ADZ} we may choose $\lambda_0$ to be
				$$
				\lambda_0=\frac{(2^\frac\alpha2-1)^2}{9}.
				$$
				In any case, $\lambda_0$ is not too small if $\alpha$ is not small. 

			\end{remark}

\begin{remark}
Corollary~\ref{cor:markov} covers in particular our initial motivating example from the introduction: for the Bernoulli shift with the Bernoulli measure, any nonconstant locally constant sampling function will produce an operator family that is almost surely spectrally localized throughout the spectrum. One should remark, though, that dynamical localization will only hold away from a discrete set of exceptional energies. There are examples where such energies are indeed present and may produce transport at a rate that is understood; compare \cite{ADZ, DT1, DT2, JS, JSS} for relevant discussions of this phenomenon. We also emphasize that Corollary~\ref{cor:doubling} fully covers the main results of \cite{BS} and fills in the energy intervals on which \cite{BS} was inconclusive regarding the localization statement.
\end{remark}

\section{Large Deviation Estimates -- Proof of Theorem~\ref{t:uld}}\label{sec:largedeviations}
In this section, we prove Theorem~\ref{t:uld}. Thus, we focus on sampling functions $f\in \LC$ or $f\in \SH$. For simplicity, we denote the projectivized action of $B\in \SL(2,\R)$ on $\R\bbP^1$ by $B\cdot v$, $v\in\R\bbP^1$.

\subsection{Stable and Unstable Holonomies}\label{ss:LDT}

Let us recall the following central concept.

\begin{definition}\label{d:holonomies}
	A \textit{stable holonomy} $h^s$ for a continuous $A:\Omega\to\SL(2,\R)$ is a family of matrices
	 $$
	 \{h^s_{\omega , \omega'}\in \SL(2,\R) : \omega\in\Omega, \omega'\in W^s_\loc(\omega)\},
	 $$
such that
	\begin{itemize}
		
		\item[(i)] $h^s_{\omega' , \omega''}h^s_{\omega, \omega'} = h^s_{\omega, \omega''}$ and $h^s_{\omega, \omega} = \mathrm{id}$,
		
		\item[(ii)] $A(\omega') h^s_{\omega, \omega'} = h^s_{T \omega, T \omega'}  A(\omega)$,
		
		\item[(iii)] $(\omega, \omega') \mapsto h^s_{\omega, \omega'}$ is uniformly continuous for all $\omega\in \Omega$ and all $\omega'\in W^u_\loc(\omega)$.
		
	\end{itemize}
	An \textit{unstable holonomy} $\{h^u_{\omega , \omega'} : \omega\in\Omega, \omega'\in W^u_\loc(\omega)\}$ for $A$ is a stable holonomy for $A^{-1}$ over $T^{-1}$.
\end{definition}
If $A$ is locally constant or fiber bunched, then it is a standard result (see e.g. the proof of \cite[Lemma 4.1]{ADZ}) that the stable and unstable holonomies can be defined as
\begin{equation}\label{e.holonomiesdef}
	H^s_{\omega,\omega'} = \lim_{n \to \infty} A_n(\omega')^{-1} A_n(\omega), \quad H^u_{\omega,\omega'} = \lim_{n \to \infty} A_{-n}(\omega')^{-1} A_{-n}(\omega)
\end{equation}
for $\omega , \omega'$ in the same stable and unstable sets, respectively. Moreover, the convergence is uniform for all $\omega\in \Omega$ and all $\omega'\in W^s_\loc(\omega)$ and all $\omega'\in W^u_\loc(\omega)$, respectively. The properties (i)--(iii) for $H^s_{\omega, \omega'}, H^u_{\omega, \omega'}$ follow directly from the construction. Holonomies that arise from \eqref{e.holonomiesdef} are called \emph{canonical holonomies} of $A$.

\begin{definition}\label{d:inv_meas_on_fiber}
	Suppose we are given a $(T,A)$-invariant probability measure $m$ on $\Omega \times \R\bbP^1$ that projects to $\mu$ in the first component. A \emph{disintegration} of $m$ along the fibers is a measurable family $\{m_\omega: \omega\in\Omega\}$ of conditional probability measures on $\R\bbP^1$ such that $m = \int m_\omega \, d\mu(\omega)$, that is,
	$$
	m(D)=\int_\Omega m_\omega(\{z\in\R\bbP^1:(\omega,z)\in D\}) \, d\mu(\omega)
	$$
	for each measurable set $D\subset \Omega\times \R\bbP^1$.
\end{definition}

By Rokhlin's disintegration theorem, such a disintegration exists. Moreover, $\{\tilde m_\omega:\omega\in\Omega\}$ is another disintegration of $m$ if and only if $m_\omega=\tilde m_\omega$ for $\mu$-almost every $\omega\in\Omega$. By a straightforward calculation one checks that $\{ A(\omega)_* m_{\omega} : \omega \in \Omega\}$ is a disintegration of $(T,A)_*m$. In particular, the facts above imply that $m$ is $(T,A)$-invariant if and only if $A(\omega)_* m_\omega = m_{T\omega}$ for $\mu$-almost every $\omega\in\Omega$.

Such a measure $m$ will be called an $s$-\emph{state} (resp., a $u$-\emph{state}) if it is in addition invariant under the stable (resp., unstable) holonomies. That is, the disintegration $\{m_\omega:\omega\in\Omega\}$ satisfies $(h^s_{\omega,\omega'})_* m_\omega = m_{\omega'}$ for $\mu$-almost every $\omega\in\Omega$ and every $\omega'\in W^s_\loc(\omega)$ (resp., $(h^u_{\omega,\omega'})_* m_\omega = m_{\omega'}$ for $\mu$-almost every $\omega\in\Omega$ and every $\omega'\in W^u_\loc(\omega)$). In this case, we say that $\{m_\omega\}$ is $s$-\emph{invariant} (resp., $u$-\emph{invariant}). A measure that is both an $s$-state and a $u$-state is called an $su$-\emph{state}.

\begin{definition}\label{def:indep_of_future}
	We say that a function defined on $\Omega$ \textit{only depends on the future} (resp., \emph{past}) if it is constant on every local stable (resp., unstable) set.
\end{definition}

We can use the stable or unstable holonomy to reduce $A$ so that it is constant on local unstable or stable sets as follows. For each $1\le j\le \ell$, fix a choice of $\omega^{(j)}\in[0;j]$. We define $\varphi(\omega)=\omega\wedge\omega^{(\omega_0)}$, which depends only on the past. We define a new cocycle map as follows. 
$$
\tilde A^{-1}(\omega):=H^u_{T^{-1}\omega, \varphi(T^{-1}\omega)}\cdot A^{-1}(\omega)\cdot H^u_{\varphi(\omega), \omega}.
$$
It is clear that $(T^{-1}, \tilde A^{-1})$ is conjugate to $(T^{-1}, A^{-1})$ via the unstable holonomy. By property (iii), $\tilde A^{-1}$ is continuous. By conditions (i) and (ii), we have that
\begin{align*}
	\tilde A^{-1}(\omega)
	&=H^u_{T^{-1}\omega, \varphi(T^{-1}\omega)}\cdot A^{-1}(\omega)\cdot H^u_{\varphi(\omega), \omega}\\
	&= H^u_{T^{-1}\omega, \varphi(T^{-1}\omega)}\cdot H^u_{T^{-1}\varphi(\omega),T^{-1}\omega}\cdot A(\varphi(\omega))\\
	&= H^u_{T^{-1}\phi(\omega), \varphi(T^{-1}\omega)}\cdot A(\varphi(\omega)),
\end{align*}
which implies that $\tilde A(\omega)$ depends only on the past. Similary, we can conjugate $(T,A)$ to $(T,\bar A)$ via the stable holonomy so that $\bar A$ depends only the future. 

The maps $H^{u,E}_{\omega,\omega'}$ are continuous and uniformly bounded for all $E$ in any compact set. It is then straightforward to see that large deviation estimates as stated in Definition~\ref{d:ple_uld} are preserved under such a conjugacy. Hence, from now on, we may without loss of generality assume that $A^E$ depends only the past. In particular, for such matrices, we have
$$
H^u_{\omega,\omega'}=I_2
$$ 
for all $\omega\in\Omega$ and all $\omega'\in W^u_\loc(\omega)$.

In the context of Theorem~\ref{t:uld}, $f\in \LC$ or $f\in \SH$. Hence, $A^E$ is fiber bunched throughout some connected compact interval $J$ containing the almost sure spectrum $\Sigma_f$. So we may let $H^{s,E}_{\omega, \omega'}$ and $H^{u,E}_{\omega,\omega'}$ be their canonical stable and unstable holonomies, respectively. 

\subsection{Reduction of the Uniform LDT} 

Let $F^E:\Omega\times\R\bbP^1\to\Omega\times\R\bbP^1$ be given by $F^E(\omega,v)=(T\omega, A^E(\omega)\cdot v)$. For a continuous function $\varphi:\Omega\times\R\bbP^1$, we let $S_n^E(\varphi)(\omega,v)=\sum^{n-1}_{k=0}\varphi((F^E)^k(\omega,v))$ be its Birkhoff sum with respect to the dynamics $F^E$. Thus a measure $\nu$ on $\Omega\times\R\bbP^1$ is $(T,A^E)$-invariant if and only if $(F^E)_*\nu=\nu$. Throughout this section, we may sometimes leave the dependence of $A^E$, $F^E$, and $H^{s(u), E}$ on $E$ implicit if it is clear from the context.

\begin{lemma}\label{p:unique_u-state}
	For every $E\in J_\eta$, $A^E$ has a unique $u$-state. Moreover, the unique $u$-state is continuous in $E$ with respect to the weak*-topology.
	\end{lemma}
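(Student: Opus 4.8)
The plan is to establish existence and uniqueness of the $u$-state for $A^E$ with $E \in J_\eta$ via the standard invariant-sections machinery, and then to upgrade this to continuity in $E$ using a compactness argument combined with the uniqueness just obtained.

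\textbf{Existence.} First I would note that $u$-states always exist for a continuous cocycle with an unstable holonomy: take any weak*-limit point of the Cesàro averages $\frac{1}{n}\sum_{k=0}^{n-1}(T,A^E)^k_*\tilde m$ for an arbitrary measure $\tilde m$ projecting to $\mu$, which produces a $(T,A^E)$-invariant probability measure $m$ projecting to $\mu$; since we have normalized $A^E$ to depend only on the past, $H^u \equiv I_2$, so every $(T,A^E)$-invariant measure projecting to $\mu$ is automatically $u$-invariant. (Alternatively, and more in the spirit of this series, one can push forward under $(T,A^E)^n$ a measure whose disintegration is, say, $\delta$-masses at a continuously varying direction, and take limits.) Existence is thus the easy half.

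\textbf{Uniqueness.} This is where PLE on $J_\eta$ enters and is the main obstacle. The point is that on $J_\eta$ we have $L(E) > 0$, and by the general theory of linear cocycles over hyperbolic base dynamics with bounded distortion (Ledrappier-type / Bonatti--Gómez-Mont--Viana--Backes--Brown arguments), positivity of the Lyapunov exponent forces the Oseledets unstable direction $\xi^u(\omega) \in \R\bbP^1$ to depend only on the past and to be $u$-invariant, and any $u$-state must disintegrate as $m_\omega = \delta_{\xi^u(\omega)}$ for $\mu$-a.e.\ $\omega$. The standard route: if $m$ is a $u$-state, consider the function $\omega \mapsto m_\omega$ on $\Omega$; $u$-invariance plus the local product structure of $\mu$ (guaranteed by bounded distortion) lets one run a martingale/reverse-martingale convergence argument along the backward orbit showing that $A^E_{-n}(\cdot)_* m_{\cdot}$ converges, and $L(E)>0$ forces the limit to be a point mass concentrated on the contracting (backward) direction, i.e.\ the unstable Oseledets direction. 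Since this direction is intrinsically determined by $(T, A^E, \mu)$ and does not depend on the choice of $m$, uniqueness follows. I would invoke the relevant statement from \cite{ADZ} (the positivity paper sets up exactly this holonomy/invariant-section framework) rather than reproving it, so the work here is mainly to check that the hypotheses — fiber bunching or local constancy on $J$, $L(E)>0$ on $J_\eta$, bounded distortion of $\mu$ — are in force.

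\textbf{Continuity in $E$.} With uniqueness in hand this is a soft compactness argument. Let $E_k \to E$ in $J_\eta$ and let $m^{(k)}$ be the unique $u$-state for $A^{E_k}$. The space of probability measures on the compact set $\Omega \times \R\bbP^1$ projecting to $\mu$ is weak*-compact, so pass to a subsequence with $m^{(k)} \to m$. One checks that $m$ is $(T, A^E)$-invariant (because $A^{E_k} \to A^E$ uniformly, $(F^{E_k})_* m^{(k)} \to (F^E)_* m$) and that $m$ is $u$-invariant (the canonical unstable holonomies $H^{u,E}_{\omega,\omega'}$ depend continuously on $E$, uniformly, as noted in the excerpt, and are the identity after our normalization, so this is immediate). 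Hence $m$ is a $u$-state for $A^E$; by uniqueness $m$ is \emph{the} $u$-state for $A^E$. Since every subsequential weak*-limit of $(m^{(k)})$ equals this same limit, the whole sequence converges, which is precisely continuity of $E \mapsto m^E$ in the weak*-topology. The only mild subtlety is verifying $u$-invariance passes to the limit when holonomies are nontrivial in the un-normalized picture; under the normalization $H^u \equiv I_2$ adopted earlier in Section~\ref{ss:LDT} this is trivial, so I would simply work in that gauge throughout.
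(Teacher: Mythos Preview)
Your existence step contains a false assertion: that $H^u \equiv I_2$ makes every $(T,A^E)$-invariant measure projecting to $\mu$ automatically $u$-invariant. The condition $H^u = I_2$ only reduces $u$-invariance to the requirement that the disintegration $\{m_\omega\}$ be constant along local unstable sets, which is \emph{not} automatic for an arbitrary invariant measure --- for instance $m^s$, the measure supported on Oseledets stable directions, is $(T,A^E)$-invariant but its disintegration genuinely depends on the future. Existence is still easy (take $m^u = \int \delta^u(\omega)\,d\mu$ with $\delta^u(\omega)$ the Dirac at the Oseledets unstable direction, as the paper does), but your stated justification is incorrect.

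The more serious gap is in uniqueness. You only invoke $L(E) > 0$, but positivity of the Lyapunov exponent alone does \emph{not} force every $u$-state to be the Dirac on the unstable direction: a cocycle given by a constant hyperbolic matrix has $L > 0$ yet two distinct $u$-states (the Diracs on both eigendirections, each of which is in fact an $su$-state). The martingale/Furstenberg-type contraction you sketch needs extra hypotheses --- precisely the pinching/twisting (``typical'') conditions of \cite{BGV, BV} --- which are not assumed here and which the paper explicitly notes need not hold on $J_\eta$. The paper's argument instead uses the \emph{other} defining property of $J_\eta$, namely $E \notin \CF_f$, i.e., $A^E$ admits no $su$-state. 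Since $L(E) > 0$, every invariant measure projecting to $\mu$ is a convex combination $t\,m^u + (1-t)\,m^s$ (cf.\ \cite{backes}); if some $u$-state had $t < 1$ then $m^s$ would itself be a $u$-state, hence an $su$-state, forcing $E \in \CF_f$ --- contradiction. You must use the absence of $su$-states, not merely PLE.

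Your continuity argument is correct and is essentially the paper's.
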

\begin{proof}
Let $E\in J_\eta$ be arbitrary. By the definition of $J_\eta$ in \eqref{e.jeta} and the discussion surrounding \eqref{e.jeta}, we have $L(E)>0$. Thus, by Oseledec's multiplicative ergodic theorem, $A$ has stable and unstable directions for $\mu$-almost every $\omega$. We consider the Dirac measures $\delta^{s}(\omega)$ and $\delta^{u}(\omega)$ on $\R\bbP^1$, where $\delta^{s}(\omega)$ and $\delta^{u}(\omega)$ are Dirac masses concentrated on the stable and unstable directions, respectively. We can define two $F^E$-invariant probability measures $m^{s}$ and $m^{s}$ on $\Omega\times \R\bbP^1$ as follows:	
	$$
	m^{s} = \int \delta^{s}(\omega) \, d\mu(\omega)\mbox{ and }m^{u} = \int \delta^{u}(\omega) \, d\mu(\omega).
	$$

The invariance of $m^{s}$ and $m^{u}$ follows from the $(T,A)$-invariance of the stable and unstable directions, respectively. Property (ii) of Definition~\ref{d:holonomies} implies
	$$
	A_n(\omega') H^s_{\omega, \omega'} = H^s_{T^n \omega, T^n\omega'}  A_n(\omega),
	$$
	which in turn implies that the length of the vector $A_n(\omega')H^s_{\omega,\omega'}\vec v$ decays exponentially for any vector $\vec v\in \mathrm{supp} \; \delta^s(\omega)$. Thus $H^{s}_{\omega,\omega'}\delta^s(\omega)=\delta^s(\omega')$, which implies that $m^s$ is an $s$-state (with disintegration $\{\delta^s(\omega)\}_{\omega\in\Omega}$). Applying the same argument to $(T^{-1}, A^{-1})$ and the unstable holonomies $H^u_{\omega,\omega'}$, we obtain that $m^u$ is a $u$-state (with disintegration $\{\delta^u(\omega)\}_{\omega\in\Omega}$). Moreover, it is not difficult to see that any $F$-invariant measure $m$ on $\Omega\times\R\bbP^1$ is a convex combination of $m^u$ and $m^s$; see, for example, \cite{backes}. 
	
We claim that $m^u$ is the unique $u$-state of $A$. Indeed, suppose there is a $u$-state $m\neq m^u$. Then $m=tm^u+(1-t)m^s$ for some $0\le t< 1$. Thus
	$$
	m^s=\frac{1}{1-t}m+\frac{t}{1-t}m^u,
	$$
	which implies $m^s$ is a $u$-state, hence an $su$-state, of $A^E$. Thus, $E \in \CF_f$, which contradicts our choice of $E$. 
	
To show the continuity of $m^u$ in $E$, we let $m^{u,E}$ denote the unique $u$-state of $A^E$ for each $E\in J_\eta$. Let $\{E_n\}$ be a convergent sequence in $J_\eta$ and $\lim\limits_{n\to\infty}E_n=E_0\in J_\eta$. To show that $m^{u, E_n}$ converges to $m^{u, E_0}$ in the weak*-topology, it suffices to show that the limit of any convergent subsequence is $m^{u,E_0}$. Without loss of generality, we may just assume that $\{m^{u,E_n}\}$ is convergent with the limit denoted by $m$; we need to show that $m=m^{u, E_0}$. Note that
	$$
	A^{E_n}(\omega)-A^{E_0}(\omega)=\begin{pmatrix}E_n-E_0 &0 \\ 0 & 0\end{pmatrix},
	$$
	which clearly implies
$$
\lim\limits_{n\to\infty}\|A^{E_n}-A^{E_0}\|_{\infty}= 0.
$$
 Note by our assumption, $H^{u,E}_{\omega,\omega'}=I_2$ for all parameters. Thus, by a special case of  \cite[Lemma 4.3]{backes}, it follows that $m$ is a $u$-state with respect to $(T,A^{E_0})$. By the uniqueness of the $u$-state, we have $m=m^{u, E_0}$, as desired.
\end{proof}

By the definition of a local unstable set, we may think of $W^u_\loc$ as a map on $\Omega^-$:
$$
W^u_\loc (\omega^-)=\{\omega\in\Omega: \pi^-(\omega)=\omega^-\}.
$$ 
Thus we may decompose $\Omega$ as a disjoint union of local unstable sets as follows:
$$
\Omega=\bigsqcup_{\omega^-\in \Omega^-} W^u_\loc(\omega^-).
$$

A \emph{disintegration} of $\mu$ along the local unstable sets is a measurable family 
$$
\{\mu^u_{\omega^-}: \mbox{ a probablity measure on } W^u_\loc(\omega^-)\}_{\omega^-\in\Omega^-}
$$ 
such that $\mu= \int_{\Omega^-} \mu^u_{\omega^-}d\mu^-(\omega^-)$. In other words,
\beq\label{eq:disint_mu}
\mu(D)=\int_{\Omega^-} \mu^u_{\omega^-}\big(\{\omega\in D: \pi^-(\omega)=\omega^-\}\big) \, d\mu^-(\omega^-)
\eeq
for each measurable set $D\subset \Omega$. Again by Rokhlin's disintegration theorem, such a disintegration exists. For $v\in\R\bbP^1$, we let $\vec v$ be a unit vector in the direction of $v$. Then we have:

\begin{theorem}\label{t:ldt_loc-u-mfld}
For every $\varepsilon>0$, there exist $C, c>0$ such that uniformly for all $(\omega^-, v)\in\Omega^-\times\R\bbP^1$ and all $E\in J_\eta$, we have
$$
\mu^u_{\omega^-}\left\{\omega\in W^u_\loc(\omega^-):  \bigg|\frac1n\log\|A^E_n(\omega)\vec v\|-L(E)\bigg|>\varepsilon\right\}<Ce^{-cn}.
$$
\end{theorem}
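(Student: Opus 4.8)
The plan is to reduce the statement to a non-exponential ``Cesàro-type'' large deviation estimate that is uniform in the fiber, the direction, and the energy, and then to boost it to the claimed exponential bound by a block decomposition together with a martingale concentration inequality, using the bounded distortion property to make well-separated blocks almost independent. Recall that we have already arranged $H^{u,E}=I_2$ and $A^E$ depending only on the past, so that on a local unstable set $W^u_\loc(\omega^-)$ the matrix $A^E_n(\omega)$ depends only on the forward coordinates $\omega_1,\dots,\omega_{n-1}$, with finite range when $f\in\LC$ and exponentially decaying range when $f\in\SH$. Writing $\phi^E(\omega,v)=\log\|A^E(\omega)\vec v\|$, so that $\log\|A^E_n(\omega)\vec v\|=S^E_n(\phi^E)(\omega,v)$, the first step is to show (this is Lemma~\ref{l.cesaro1}) that
\[
\sup_{\omega^-\in\Omega^-}\ \sup_{v\in\R\bbP^1}\ \sup_{E\in J_\eta}\ \mu^u_{\omega^-}\Big\{\omega\in W^u_\loc(\omega^-):\big|\tfrac1n\log\|A^E_n(\omega)\vec v\|-L(E)\big|>\varepsilon\Big\}\longrightarrow 0
\]
as $n\to\infty$.

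For this, Kingman's subadditive ergodic theorem already gives $\frac1n\log\|A^E_n(\omega)\vec v\|\to L(E)$ for $\mu^u_{\omega^-}$-a.e.\ $\omega$ on every fiber, provided $v$ does not lie in the Oseledec stable direction for $\mu^u_{\omega^-}$-a.e.\ $\omega$. This is where the \emph{uniqueness of the $u$-state} (Lemma~\ref{p:unique_u-state}) enters: it forces the $\mu^u_{\omega^-}$-distribution of the stable direction to be nonatomic, for otherwise an atom at some $v$ would survive, as $s$-state mass, in any weak-$*$ limit of the Cesàro averages $\frac1N\sum_{i<N}(F^E)^i_*(\mu^u_{\omega^-}\otimes\delta_v)$, contradicting that such a limit is $F^E$-invariant and hence (being a convex combination of the $s$-state and $m^{u,E}$) equal to $m^{u,E}$. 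The bounded distortion property and the mixing constant $r_0$ are then used to make these distributions \emph{uniformly} nonatomic and the a.e.\ convergence uniformly fast across all fibers and directions, while continuity of $m^{u,E}$ in $E$ plus compactness of $J_\eta$ handles the uniformity in $E$.

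For the exponential upgrade, fix $\varepsilon>0$ and use Lemma~\ref{l.cesaro1} -- and its consequence for operator norms, coming from $\log\|A\|\le\max\{\log\|A\vec v\|,\log\|A\vec v^\perp\|\}+\tfrac12\log 2$ and uniformity in the direction -- to choose a scale $N=N(\varepsilon)$ at which, on every fiber and for every direction, the corresponding ``bad set'' has sufficiently small measure. For large $n$ write $n=kN+r$ with $0\le r<N$ and telescope along a fiber,
\[
\log\|A^E_n(\omega)\vec v\|=\sum_{j=0}^{k-1}\log\|A^E_N(T^{jN}\omega)\vec w_j(\omega)\|+O(N),\qquad \vec w_j(\omega)=\tfrac{A^E_{jN}(\omega)\vec v}{\|A^E_{jN}(\omega)\vec v\|}.
\]
Set $X_j=\tfrac1N\log\|A^E_N(T^{jN}\omega)\vec w_j\|$, which is bounded by $\log\Gamma$, and let $\mathcal F_j$ be the $\sigma$-algebra generated by the coordinates through block $j$. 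Conditioning on $\mathcal F_{j-1}$ fixes the direction $\vec w_j$, and the scale-$N$ estimate applied on the corresponding local unstable fiber with that deterministic direction gives $\E[X_j\mid\mathcal F_{j-1}]\ge L(E)-\varepsilon/3$, the small bad set contributing negligibly. Hence $\sum_j\big(X_j-\E[X_j\mid\mathcal F_{j-1}]\big)$ is a martingale with increments bounded by $2\log\Gamma$, and Azuma--Hoeffding yields $\mu^u_{\omega^-}\{\tfrac1k\sum_{j<k}X_j<L(E)-\tfrac\varepsilon2\}<e^{-c_0k}$ with $c_0=c_0(\varepsilon,\Gamma)$. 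Since $\frac1n\log\|A^E_n(\omega)\vec v\|=\frac Nn\sum_{j<k}X_j+O(N/n)$, this bounds the lower-deviation probability by $Ce^{-cn}$ with $c=c_0/(2N)$. The upper deviation is symmetric, via $\frac1n\log\|A^E_n(\omega)\vec v\|\le\frac1n\log\|A^E_n(\omega)\|\le\frac1k\sum_{j<k}\frac1N\log\|A^E_N(T^{jN}\omega)\|$ (submultiplicativity) and the scale-$N$ estimate for norms. All constants are uniform in $\omega^-$, $v$, and $E$ because $N$, $c_0$, $C$, and $\Gamma$ are.

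The main obstacle is the coupling between successive blocks: unlike $\log\|A^E_n\|$, the quantity $\log\|A^E_n(\omega)\vec v\|$ is not subadditive in $n$, and the contribution of block $j$ depends on the projective direction $\vec w_j$ produced by all earlier blocks, so there is no direct independence to use. This is resolved exactly by the uniformity over the initial direction in Lemma~\ref{l.cesaro1}: because the block-level estimate holds for \emph{every} direction fed into a block, the conditional-mean inequality $\E[X_j\mid\mathcal F_{j-1}]\ge L(E)-\varepsilon/3$ holds for \emph{every} realization of the past, which is what turns the block sum into a martingale with controlled drift. Establishing that uniformity -- proving Lemma~\ref{l.cesaro1} with constants independent of $v$, $\omega^-$, and $E$ -- is therefore the real difficulty, and it is here that the uniqueness of the $u$-state must be combined essentially with the bounded distortion property and topological mixing.
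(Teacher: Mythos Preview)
Your overall architecture---a soft scale-$N$ estimate, a block decomposition, and Azuma--Hoeffding---matches the paper's. But two points deserve comment.

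First, what you call ``Lemma~\ref{l.cesaro1}'' is not what that lemma actually says. In the paper, Lemmas~\ref{l.cesaro1} and~\ref{l.cesaro2} prove the much softer statement that the Ces\`aro averages $\frac1n\sum_{k<n}(F^E)^k_*m$ converge in the weak-$*$ topology to $m^{u,E}$, uniformly in the fiber, the direction, and $E$. This yields directly that the \emph{conditional expectation} of a block Birkhoff average is within $\varepsilon/4$ of $\int\varphi\,dm^{u,E}$ (equation~\eqref{e.almostmartingale3}), which is exactly the drift control your martingale step needs. You instead set out to prove the stronger statement that the \emph{deviation probability} at scale $N$ tends to zero uniformly, and your sketch---Kingman on each fiber, nonatomicity of the stable direction via uniqueness of the $u$-state, then ``bounded distortion makes things uniform''---does not close: pointwise a.e.\ convergence for each fixed $(\omega^-,v,E)$ does not by itself give convergence in probability that is uniform over the triple. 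The paper sidesteps this entirely; the weak-$*$ convergence of measures is a compact statement whose uniformity follows from uniqueness of the limit and metrizability, with no pointwise ergodic theorem invoked.

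Second, the two martingales are structurally different. You take $X_j=\tfrac1N\log\|A^E_N(T^{jN}\omega)\vec w_j\|$ with increments bounded by $\log\Gamma$. The paper instead sets $Y_i(\omega)=\frac{1}{m(D_i(\omega))}\int_{D_i(\omega)}S_i(\varphi)\,dm$, forms $X_n=Y_{nN}-\sum_k\E[Y_{kN}-Y_{(k-1)N}\mid\mathcal B_{(k-1)N}]$, and bounds the increments $X_{n+1}-X_n$ by showing $|S_{(n+1)N}(\varphi)(\omega',v)-S_{(n+1)N}(\varphi)(\omega'',v)|\le C(e^{-\alpha}+N)$ for $\omega',\omega''$ in the same depth-$nN$ cylinder, using H\"older continuity of $\varphi$. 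Your construction is more direct but relies on the soft deviation input you have not fully established; the paper's construction needs only the conditional-expectation input from Lemma~\ref{l.cesaro2}, which is genuinely easier, and makes explicit where H\"older regularity is used.

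In short: the route is the same in spirit, but your Step~1 is both mislabeled and stronger than what is needed, and its justification is incomplete. Replacing it by the weak-$*$ Ces\`aro convergence of Lemma~\ref{l.cesaro2} repairs the argument and recovers essentially the paper's proof.
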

 Theorem~\ref{t:uld} is an easy consequence of Theorem~\ref{t:ldt_loc-u-mfld} as follows.
 \begin{proof}[Proof of Theorem~\ref{t:uld}]
 	By Theorem~\ref{t:ldt_loc-u-mfld} and the disintegration \eqref{eq:disint_mu}, we clearly have
 	\[\mu\left\{\omega\in\Omega:  \bigg|\frac1n\log\|A^E_n(\omega)\vec v\|-L(E)\bigg|>\varepsilon\right\}<Ce^{-cn}
 	\]
 	uniformly for all $E\in J_\eta$. Choosing $\vec v=(0,1)$ and $(1,0)$, it then follows from a standard computation (see, e.g., the proof of \cite[Theorem 3.1]{bucaj2}) that the estimate above implies
 		\[\mu\left\{\omega\in\Omega:  \bigg|\frac1n\log\|A^E_n(\omega)\|-L(E)\bigg|>\varepsilon\right\}<Ce^{-cn}
 	\]
 	uniformly for all $E\in J_\eta$, as desired.
 	\end{proof} 
 
 On the other hand, Theorem~\ref{t:ldt_loc-u-mfld} follows from the next theorem. Recall that for a continuous function $\varphi$ on $\Omega\times\R\bbP^1$, we denote by $S^E_n(\varphi)(\omega,v)=\sum^{n-1}_{k=0}\varphi((F^E)^k(\omega,v))$ its Birkhoff sum with respect to the dynamics $F^E$. 

\begin{theorem}\label{t:ldt_additive}
For every $\varphi\in C^\alpha(\Omega\times\R\bbP^1,\R)$, $\varepsilon>0$, and $E_0\in J_\eta$, there exist $C, c, r>0$ such that uniformly for all $(\omega^-, v)\in\Omega^-\times\R\bbP^1$ and all $E\in (E_0-r, E_0+r)\cap J_\eta$, we have that
	$$
	\mu^u_{\omega^-}\left\{\omega\in W^u_\loc(\omega^-): \bigg|\frac1nS^E_n(\varphi)(\omega,v)-\int\varphi d\, m^{u,E}\bigg|>\varepsilon\right\}<Ce^{-cn}.
	$$
\end{theorem}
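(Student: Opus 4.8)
The goal is a large-deviation estimate for Birkhoff sums of a H\"older observable $\varphi$ on $\Omega\times\R\bbP^1$ with respect to the fiber-bunched (or locally constant) cocycle $F^E$, uniformly over the disintegration measures $\mu^u_{\omega^-}$ and over a small parameter window around $E_0$. The strategy I would follow is the classical ``abstract large deviations via uniform convergence of Ces\`aro averages'' scheme (this is the mechanism behind the Ces\`aro lemmas \texttt{l.cesaro1} etc.\ that the paper announces it will use, and it is the same route used in \cite{duarteklein, park} in their setting). The engine is: $(\star)$ for every $\e>0$ there is $N$ and $\rho<1$ such that
\[
\sup_{(\omega,v)}\ \Big|\tfrac1N S^E_N(\varphi)(\omega,v)-\int\varphi\,dm^{u,E}\Big|\le \e \quad\text{off an ``exceptional'' set of }\mu^u_{\omega^-}\text{-measure }\le\rho,
\]
uniformly in $\omega^-$ and in $E$ near $E_0$; once this ``single-scale'' estimate holds, the full exponential bound follows by a subadditive/iteration argument using the (approximate) product structure of $\mu$ along unstable sets, which is exactly what the bounded distortion property and the nonempty-intersection property \eqref{eq:nonempty_intersection} are there to provide (concatenation of admissible words with a bounded gap $r_0$, with comparable measure).

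\textbf{Key steps, in order.} First, I would reduce to $\varphi$ depending only on finitely many coordinates and on $v$ (density of locally constant functions in $C^\alpha$; the fiber-bunching controls the tail error), and I would use the reduction already made in the excerpt so that $A^E$ depends only on the past and $H^{u,E}_{\omega,\omega'}=I_2$; this means the unstable holonomy acts trivially, so ``$u$-state'' just means $A^E(\omega)_*m_\omega=m_{T\omega}$ with $\omega\mapsto m_\omega$ depending only on the past. Second, I would establish the uniform convergence of Ces\`aro averages: for a fixed continuous $\varphi$, the averages $\frac1n\sum_{k=0}^{n-1}\varphi\circ F^k$ converge to $\int\varphi\,dm^{u,E}$ uniformly on $\Omega\times\R\bbP^1$, uniformly for $E$ in a neighborhood of $E_0$. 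This is where Lemma~\ref{p:unique_u-state} (uniqueness of the $u$-state, continuity in $E$) is essential: unique ergodicity ``in the unstable direction'' forces uniform convergence, by a standard compactness/contradiction argument — if it failed, a subsequence of empirical measures would converge to an $F$-invariant measure that is forced (by the holonomy-triviality and the fact that we average only over unstable fibers / the non-contraction of the unstable direction) to be a $u$-state $\ne m^{u,E_0}$. Third, I would pass from ``uniform in $(\omega,v)$'' to ``off a small-measure exceptional set'' — here this is free because the convergence is genuinely uniform in $(\omega,v)$ — and then upgrade the single-scale estimate to an exponential bound. For the exponential upgrade, fix $N$ large with the single-scale error $\le\e/2$; split $n=mN+r$; write $S_n^E(\varphi)(\omega,v)=\sum_{i=0}^{m-1}\big(S_N^E(\varphi)\circ F^{iN}\big)(\omega,v)+O_N(1)$; and observe that along a local unstable fiber the first $N$ symbols of $F^{iN}(\omega,v)$ are, up to the bounded gap $r_0$, ``independent'' of the previous block in the sense of \eqref{eq:bdd}. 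Then a Bernstein/Chernoff argument (estimate $\int e^{t(S_N^E(\varphi)-N\int\varphi\,dm^{u,E})}\,d\mu^u_{\omega^-}$, iterate using bounded distortion, optimize in $t$) yields $\mu^u_{\omega^-}\{|\frac1n S_n-\int\varphi\,dm^u|>\e\}\le Ce^{-cn}$ with $C,c$ depending only on $\e,\varphi,N,r_0,C_{\mathrm{bdd}}$, hence uniform in $\omega^-$ and in $E\in(E_0-r,E_0+r)\cap J_\eta$.

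\textbf{The main obstacle.} The crux is the uniform (in both $(\omega,v)$ and $E$) convergence of the Ces\`aro averages, i.e.\ turning the abstract uniqueness of the $u$-state from Lemma~\ref{p:unique_u-state} into a genuinely uniform ergodic statement; the subtlety is that $F^E$ is not uniquely ergodic on $\Omega\times\R\bbP^1$ (there are two invariant measures, $m^u$ and $m^s$), so I must argue that averaging over local unstable fibers, together with positivity of $L(E)$, kills the $m^s$-component — any weak-$*$ limit of empirical measures started on an unstable fiber is a $u$-state, and all $u$-states coincide. Making this argument uniform in $E$ requires the weak-$*$ continuity $E\mapsto m^{u,E}$ from Lemma~\ref{p:unique_u-state} together with a careful compactness argument on $J_\eta$ (or on a compact subinterval), and also a uniform (in $E$) version of the holonomy estimates used in Lemma~\ref{p:unique_u-state}, which the fiber-bunching gives with constants locally uniform in $E$. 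The bounded-distortion iteration in the last step is technically involved but conceptually routine; it is the uniform unique-ergodicity-along-unstable-fibers input that carries the real weight, and it is precisely the step where the hypotheses of Theorem~\ref{t:uld} (fixed point, bounded distortion, $f\in\LC\cup\SH$ forcing fiber bunching and finiteness of $\CF_f$) are all used.
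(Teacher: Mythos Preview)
Your overall architecture is right --- uniqueness of the $u$-state (Lemma~\ref{p:unique_u-state}) is indeed the key input, and a concentration inequality is how one closes the argument. But there is a genuine gap in the implementation, and it sits exactly at the point you yourself flag as ``the main obstacle.''

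You assert that the Ces\`aro averages $\frac{1}{n}\sum_{k=0}^{n-1}\varphi\circ (F^E)^k$ converge to $\int\varphi\,dm^{u,E}$ \emph{uniformly on} $\Omega\times\R\bbP^1$, and then say the single-scale estimate $(\star)$ holds with empty exceptional set. This is false: $F^E$ has two ergodic invariant measures $m^{u,E}$ and $m^{s,E}$, and for $(\omega,v)$ with $v$ the stable direction at $\omega$ the Birkhoff average tends to $\int\varphi\,dm^{s,E}$, not $\int\varphi\,dm^{u,E}$. What is actually available (and what Lemma~\ref{l.cesaro2} delivers) is convergence \emph{after integrating over the unstable fiber}: for $m=\mu^u_{\omega^-}\times\delta_v$ one has $\frac{1}{n}\sum_{k=0}^{n-1}(F^E)^k_*m\to m^{u,E}$ uniformly in $\omega^-,v,E$. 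That is a statement about $\int\frac{1}{n}S_n(\varphi)\,dm$, not about $\frac{1}{n}S_n(\varphi)(\omega,v)$ pointwise.

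This also breaks your iteration step. The blocks $S_N^E(\varphi)\circ (F^E)^{iN}$ are \emph{not} approximately independent along the unstable fiber, because the $\R\bbP^1$-component at time $iN$ is $A^E_{iN}(\omega)\cdot v$, which depends on all of $\omega_1,\ldots,\omega_{iN-1}$. Bounded distortion decouples the \emph{base} coordinates only; the fiber coordinate carries the entire history, so the exponential-moment factorization behind a Bernstein/Chernoff argument does not go through as stated. (Your proposed reduction to locally constant $\varphi$ does not help here, for the same reason.)

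The paper's remedy is to use the \emph{conditional} form of the Ces\`aro convergence. One checks that for every cylinder $D_i(\omega)=([0;\omega_0,\ldots,\omega_i]\cap W^u_\loc(\omega^{-,j}))\times\R\bbP^1$ the normalized push-forward of $m$ under $(F^E)^i$ restricted to that cylinder is again of the type covered by Lemma~\ref{l.cesaro2}; this uses that $A^E$ has been reduced to depend only on the past, so $A^E_i(\omega)\cdot v$ is \emph{constant} on $D_i(\omega)$. Hence $\frac{1}{N}\,\E\big(Y_{i+N}-Y_i\,\big|\,\CB_i\big)$ is within $\e/4$ of $\int\varphi\,dm^{u,E}$ for every $i$, uniformly. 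Setting $X_n=Y_{nN}-\sum_{k=1}^n\E(Y_{kN}-Y_{(k-1)N}\mid\CB_{(k-1)N})$ yields a genuine martingale; H\"older continuity of $\varphi$ gives a uniform bound on $|X_{n+1}-X_n|$; and Azuma's inequality (Lemma~\ref{l.azuma}) produces the exponential tail. In short: the correct substitute for ``approximate independence of blocks'' is a \emph{martingale difference} structure, and the correct substitute for your pointwise single-scale estimate is a \emph{conditional-expectation} single-scale estimate coming from Lemma~\ref{l.cesaro2} applied not once but on every cylinder.
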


Theorem~\ref{t:ldt_loc-u-mfld} follows from Theorem~\ref{t:ldt_additive}:

\begin{proof}[Proof of Theorem~\ref{t:ldt_loc-u-mfld}]
	 Let $\varphi^{E}(\omega,v)=\log\|A^{E}(\omega)\vec v\|$. It is a direct computation to see that $\varphi^E\in C^\alpha(\Omega\times\R\bbP^1)$ since $f\in C^\alpha(\Omega,\R)$. Applying Theorem~\ref{t:ldt_additive} to $\varphi^{E_0}$ and the given $\varepsilon>0$, we obtain some positive constants $r_0, c, C$ such that 
	 $$
	 \mu^u_{\omega^-}\left\{\omega\in W^u_\loc(\omega^-):  \bigg|\frac1nS^E_n(\varphi^{E_0})(\omega,v)-\int\varphi^{E_0} d\, m^{u,E}\bigg|>\frac\varepsilon3\right\}<Ce^{-cn}.
	 $$
	 uniformly for all $(\omega^-, v)\in\Omega^-\times\R\bbP^1$ and all $E\in (E_0-r_0, E_0+r_0)\cap J_\eta$.
	 
	 For any $E_0\in J_\eta$, let $\{E_n\}$ be a sequence converging to $E_0$. Then it is clear that 
 $$
 \lim\limits_{n\to\infty}\|\varphi^{E_n}-\varphi^{E_0}\|_\infty=0,
 $$
 where the supremum norm is taken over $\Omega\times\R\bbP^1$. Thus for the given $\varepsilon>0$, there exists $r_1>0$ such that for all $E$ with $|E-E_0|<r_1$, we have
\begin{align*}
	&\left \|\frac1nS^E_n(\varphi^E)-\frac1nS^E_n(\varphi^{E_0})\right\|_{0}<\frac\varepsilon3 \mbox{ and } \\ 
	&\left|\int\varphi^E \, dm^{u,E}-\int\varphi^{E_0}\, dm^{u, E}\right|<\frac\varepsilon3.
\end{align*}

Setting $r=\min\{r_0, r_1\}$ and combining the estimates above, we obtain
 $$
 \mu^u_{\omega^-}\left\{\omega\in W^u_\loc(\omega^-):  \bigg|\frac1nS^E_n(\varphi^{E})(\omega,v)-\int\varphi^{E} d\, m^{u,E}\bigg|>\varepsilon\right\}<Ce^{-cn},
 $$
uniformly for all $(\omega^-, v)\in\Omega^-\times\R\bbP^1$ and all $E\in (E_0-r, E_0+r)\cap J_\eta$. 

By compactness of $J_\eta$, and changing $C, c$ if necessary, we get the uniformity in $E\in J_\eta$. This yields the desired conclusion of Theorem~\ref{t:ldt_loc-u-mfld}. Indeed, we just need to note that
	\begin{align*}
		\frac1nS^E_n(\varphi^E)(\omega,v)
		&=\frac1n\sum^{n-1}_{k=0}\varphi^E(T^k\omega,\vec{A^{E}_k(\omega)v})\\
		&=\frac1n\sum^{n-1}_{k=0}\log\bigg\|A^{E}(T^{k}\omega)\frac{A^{E}_k(\omega)\vec v}{\|A^{E}_k(\omega)\vec v\|})\bigg\|\\
		&=\frac1n\log\|A^{E}_n(\omega)\vec v\|,
	\end{align*}
	and by Birkhorff ergodic theorem, we have
	$$ 
	\int\varphi^E dm^{u,E}=L(E).
	$$
This concludes the proof.
\end{proof}

\subsection{Proof of Theorem~\ref{t:ldt_additive}} 

In this subsection, we shall prove Theorem~\ref{t:ldt_additive}. Let $C, c$ be some universal constants with $0<c<1<C$. We also let $\omega^{-,j}\in \Omega^-_j$ for each $j\in \{1,\ldots, \ell\}$.

\begin{lemma}\label{l.cesaro1}
Consider any $\omega^{-,j}\in\Omega^-_j$ and let $\nu^u$ be a probability measure on $W^u_\loc(\omega^{-,j})$ such that
	\begin{equation}\label{eq:eqiv_to_mu+}
	C^{-1} \le \frac{d((\pi^+)_* \nu^u)}{d\mu^+_j} \le C.
	\end{equation}
  Then, we have
	\begin{equation}\label{eq:nu_to_mu}
	\frac{1}{n} \sum_{k=0}^{n-1} T^k_* \nu^u \to \mu
	\end{equation}
	in the weak-$*$ topology as $n \to \infty$, uniformly in $\omega^{-,j}$, $1\le j\le \ell$, and $\nu^u$.
\end{lemma}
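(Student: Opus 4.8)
The plan is to deduce \eqref{eq:nu_to_mu} from the fact that \emph{every} weak-$*$ limit point of the Ces\`aro averages is $\mu$, which simultaneously delivers the claimed uniformity. Write $\mu_n=\frac1n\sum_{k=0}^{n-1}T^k_*\nu^u$. Since the space $\CP(\Omega)$ of Borel probability measures on the compact metric space $\Omega$ is weak-$*$ compact and metrizable, it is enough to show: for any admissible sequence of data $(j_i,\omega^{-,j_i},\nu^u_i)$ and any $n_i\to\infty$, every weak-$*$ subsequential limit $\bar\mu$ of the associated sequence $\mu_{n_i}$ equals $\mu$. Indeed, if \eqref{eq:nu_to_mu} failed uniformly in the three parameters, one could extract exactly such a sequence with $\mu_{n_i}$ staying a fixed distance away from $\mu$, a contradiction. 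So fix such a sequence and, after passing to a subsequence, assume $\mu_{n_i}\to\bar\mu$.

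\emph{First, $\bar\mu$ is $T$-invariant.} This is the usual Krylov--Bogolyubov computation: $\|T_*\mu_{n_i}-\mu_{n_i}\|=\frac1{n_i}\|T^{n_i}_*\nu^u_i-\nu^u_i\|\le\frac2{n_i}\to0$ in total variation, so testing against $g\in C(\Omega)$ gives $\int g\circ T\,d\bar\mu=\int g\,d\bar\mu$.

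\emph{Second, $\bar\mu\ll\mu$ --- this is the crux.} Since finite cylinders are clopen, $\bar\mu(B)=\lim_i\mu_{n_i}(B)$ for every cylinder $B=[m;a_0,\ldots,a_p]$, and $\mu_{n_i}(B)=\frac1{n_i}\sum_{k=0}^{n_i-1}\nu^u_i([m+k;a_0,\ldots,a_p])$. Choose a threshold $k_0$, depending only on $m$ and on the mixing constant $r_0$ of \eqref{eq:nonempty_intersection}, so large that for $k\ge k_0$ the word $[m+k;a_0,\ldots,a_p]$ lies strictly to the right of the coordinate $0$ and is far enough from it for both \eqref{eq:nonempty_intersection} and the bounded distortion estimate \eqref{eq:bdd} to apply to the pair $[0;j_i]$, $[m+k;a_0,\ldots,a_p]$. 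The (at most $k_0$) terms with $k<k_0$ contribute at most $k_0/n_i\to0$. For $k\ge k_0$ the cylinder $[m+k;a_0,\ldots,a_p]$ depends only on positive coordinates; since $\pi^+$ restricts to a homeomorphism of $W^u_\loc(\omega^{-,j_i})$ onto $\Omega^+_{j_i}$, we get $\nu^u_i([m+k;a_0,\ldots,a_p])=\big((\pi^+)_*\nu^u_i\big)\big([m+k;a_0,\ldots,a_p]^+\big)\le C\,\mu^+_{j_i}\big([m+k;a_0,\ldots,a_p]^+\big)=C\,\mu\big([0;j_i]\cap[m+k;a_0,\ldots,a_p]\big)$ by the hypothesis \eqref{eq:eqiv_to_mu+}. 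Now \eqref{eq:bdd} bounds this by $C\,\mu([0;j_i])\,\mu([m+k;a_0,\ldots,a_p])=C\,\mu([0;j_i])\,\mu(B)$, using $T$-invariance of $\mu$; and integrating the density bound in \eqref{eq:eqiv_to_mu+} over $[0;j_i]^+$ forces $\mu([0;j_i])\le C$. Hence $\nu^u_i([m+k;a_0,\ldots,a_p])\le C^3\mu(B)$ for all $k\ge k_0$, with all constants independent of $i$ (hence of $j$, $\omega^{-,j}$, $\nu^u$), so $\bar\mu(B)\le C^3\mu(B)$. As cylinders generate the Borel $\sigma$-algebra, $\bar\mu\ll\mu$.

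\emph{Finally, $\bar\mu=\mu$.} A $T$-invariant probability measure absolutely continuous with respect to the $T$-ergodic measure $\mu$ must equal $\mu$: as $T$ is a homeomorphism, the Radon--Nikodym density $d\bar\mu/d\mu$ is $T$-invariant $\mu$-almost everywhere, hence $\mu$-a.e.\ constant by ergodicity, hence $\equiv1$ since both measures are probabilities. This finishes the proof. The main obstacle is the second step: one needs a genuine comparison of $\nu^u([m+k;a_0,\ldots,a_p])$ with $\mu([m;a_0,\ldots,a_p])$ valid for \emph{all} large $k$ and \emph{uniformly} over the admissible $\nu^u$ and over $j$, and this forces one to use all of the available structure at once --- the identification of the local unstable set with $\Omega^+_j$ via $\pi^+$, the hypothesis \eqref{eq:eqiv_to_mu+}, topological mixing (through $r_0$) to keep the relevant cylinder intersections nonempty, and the bounded distortion property to decouple the symbol at $0$ from the far-away constraint. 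Note that only the upper bound in \eqref{eq:eqiv_to_mu+} together with $\mu([0;j])\le C$ is actually used, since the last step needs only $\bar\mu\ll\mu$.
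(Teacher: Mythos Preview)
Your proof is correct. The overall architecture---reduce to showing every subsequential weak-$*$ limit is $T$-invariant and absolutely continuous with respect to $\mu$, then invoke ergodicity, and obtain uniformity by a diagonal/compactness contradiction---is the same as the paper's. The execution, however, differs. The paper introduces the auxiliary measure $\nu_j=\mu^-_j\times(\pi^+)_*\nu^u$ on $[0;j]$, first shows that \emph{its} Ces\`aro averages converge to $\mu$ (via a two-sided density bound $C^{-1}\le d\nu'/d\mu\le C$), and then runs a separate argument to prove $\int\varphi\,d(T^k_*\nu^u)-\frac{1}{\mu^-_j(\Omega^-_j)}\int\varphi\,d(T^k_*\nu_j)\to0$ using that points on the same local unstable set are contracted by $T^k$. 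You bypass the auxiliary measure entirely: using that for large $k$ the shifted cylinder $[m+k;a_0,\dots,a_p]$ lives in positive coordinates, you transfer directly to $\Omega^+_j$ via $\pi^+$, apply the hypothesis \eqref{eq:eqiv_to_mu+}, and then use bounded distortion to decouple $[0;j_i]$ from the far-away cylinder. This is more streamlined and shows, as you note, that only $\bar\mu\ll\mu$ (hence only the upper bound in \eqref{eq:eqiv_to_mu+}) is needed. One small inconsistency: your line ``integrating the density bound \dots forces $\mu([0;j_i])\le C$'' actually uses the \emph{lower} bound in \eqref{eq:eqiv_to_mu+}; but this is unnecessary, since $\mu([0;j_i])\le1$ trivially, so your closing remark that only the upper bound is used is in fact correct once this is observed.
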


\begin{proof}
	Let $\nu_j=\mu^-_j \times (\pi^+)_* \nu^u$ be a measure on $[0;j]$. Let $\nu=\frac{1}{\mu^-_j(\Omega^-_j)}\nu_j$, which is a probability measure. By \eqref{eq:eqiv_to_mu+}, we have
	$$
	C^{-1} \le \frac{d(\mu^-_j\times (\pi^+)_* \nu^u)}{d(\mu^-_j\times\mu^+_j)} \le C.
	$$
	By the bounded distortion property of $\mu$, it is straightforward to see (see, e.g., \cite[Section 2.1.2]{ADZ}) that 
	$$
	C^{-1} \le \frac{d\mu_j}{d(\mu^-_j\times\mu^+_j)} \le C.
	$$
	Combining the two estimates above, we obtain
\[
	C^{-1} \le \frac{d\nu_j}{d\mu_j} \le C,
\]
	which implies that for all $n\ge 1$, we have
	\begin{equation}\label{eq:equiv_to_mu_ave}
			C^{-1} \le \frac{d\big(\sum^{n-1}_{k=0}T^k_*\nu_j\big)}{d\big(\sum^{n-1}_{k=0}T^k_*\mu_j\big)} \le C.
		\end{equation}

By \eqref{eq:nonempty_intersection}, for every cylinder set $[i;j_0,\cdots, j_s]$, we have $T^{-k}([i;j_0,\cdots, j_s])\cap[0;j]\neq\varnothing$ for all $k$ sufficiently large. In particular, we can consider such $k$'s so that $k+i>0$. By the bounded distortion property, we then have 
\begin{align*}
(T^k_*\mu_j)([n;j_0,\cdots, j_s])&=\mu(T^{-k}([i;j_0,\cdots, j_s]\cap[0;j])\\
&=\mu([i+k;j_0,\cdots, j_s]\cap[0;j])\\
&\ge C^{-1}\mu([i+k;j_0,\cdots, j_s])\cdot \mu([0;j])\\
&\ge C^{-1}\mu([i+k;j_0,\cdots, j_s]).
\end{align*}
Thus by $T$-invariance of $\mu$, for every cylinder set  $[i;j_0,\cdots, j_s]$, we have for each large $n$ that
\[
C^{-1}\le\frac{\frac1n\sum^{n-1}_{k=0}(T^k_*\mu_j)([i;j_0,\cdots, j_s])}{\mu([i;j_0,\cdots, j_s])}\le C.
\] 
Thus by \eqref{eq:equiv_to_mu_ave}, the estimates above hold true if we replace $\mu_j$ by $\nu=\frac{1}{\mu^-_j(\Omega^-_j)}\nu_j$. This further implies that if $\nu'$ is a limit point of the sequence $\{\frac1n\sum^{n-1}_{k=0}(T^k_*\nu)\}_n$, then we have 
\begin{equation}\label{eq:equiv_to_mu}
	C^{-1} \le \frac{d\nu'}{d\mu} \le C.
\end{equation}

	 Since $\Omega$ is a compact metric space, by the  Stone-Weierstrass theorem the set $C(\Omega,\R)$ is separable, which in turn implies that the weak-$*$ topology on the set of Borel probability measures on $\Omega$ is metrizable. For instance, such a metric may be chosen as 
	\[
	\rho(\nu,\nu'):=\sum^{\infty}_{s=1}2^{-s}\left|\int f _s\, d\nu-\int f_s\, d\nu'\right|,
	\]
	where $\{f_s: s\in \Z_+\}$ is a dense subset of the set $\{f\in C(\Omega,\R): \|f\|_\infty\le 1\}$.
	Note that by the Banach-Alaoglu theorem, the weak-$*$ topology on the set of Borel probability measures on any topological space is compact. Combining the two facts above, we see that the the weak-$*$ topology on the set of Borel probability measures on $\Omega$ is sequentially compact.  
	
	Now let $\nu^{(n)} = \frac{1}{n} \sum_{k=0}^{n-1} T_*^k \nu$ for any $n\in \Z_+$. By the discussion above and \eqref{eq:equiv_to_mu}, the sequence $\{ \nu^{(n)} \}$ has a weak-$*$ accumulation point $\nu^{(0)}$ that is equivalent to $\mu$. It is clear that $\nu^{(0)}$ is also $T$-invariant. Set $g=\frac{d\nu^{(0)}}{d\mu}$, which is $T$-invariant since both $\nu^{(0)}$ and $\mu$ are $T$-invairant. By ergodicity of $\mu$, there is a constant $c$ such that $g(\omega)=c$ for $\mu$-a.e. $\omega$. Since both $\nu^{(0)}$ and $\mu$ are probability measures, we must have $c=1$, which implies $\nu^{(0)}=\mu$.
	
	 Thus, by uniqueness of the accumulation point, we have $\nu^{(n)} \to \mu$ as $n\to\infty$ in the weak-$*$ topology. Applying uniqueness of the limit and metrizability of the set of Borel probability measures, we have that the convergence is uniform in $\nu$. That is, the convergence is uniform in the choices of $\nu^u$ satisfying \eqref{eq:eqiv_to_mu+} and the choices of $\omega^{-,j}$. Indeed, assume for the sake of contradiction that the convergence is not uniform. Then there is $\varepsilon_0>0$ and a strictly increasing sequence $\{n_l\}_{l\ge 1}$ in $\Z_+$ with the following property. For every $l$, there is $\nu(l)$ satisfying \eqref{eq:equiv_to_mu} such that
	\[
	\rho(\nu^{(n_l)}(l),\mu)\ge\varepsilon_0,
	\]
	where $\nu^{(n)}(l):=\frac{1}{n} \sum_{k=0}^{n-1} T_*^k \nu(l)$. Without loss of generality, we may assume $\{\nu^{(n_l)}(l)\}_{l\ge 1}$ is convergent and the limit is $\widetilde\nu$. Thus $\widetilde\nu$ satisfies \eqref{eq:equiv_to_mu} and $\rho(\widetilde\nu, \mu)\ge\varepsilon_0$. Moreover, $\widetilde \nu$ is $T$-invariant. Indeed, 
	\begin{align*}
		\rho(T_*\nu^{(n_l)}(l), \nu^{(n_l)}(l))&=\frac1{n_l}\sum^{\infty}_{s=1}2^{-s}\left|\sum^{n_l-1}_{k=0}\int f_s\, d(T^{k+1}_*\nu(l))-\int f_s\, d(T^k_*\nu(l))\right|\\
		&=\frac1{n_l}\sum^{\infty}_{s=1}2^{-s}\left|\int f_s\, d(T^{n_l}_*\nu(l))-\int f_s\, d(\nu(l))\right|\\
		&\le \frac{2}{n_l},
\end{align*}
which implies 
\[
T_*\widetilde \nu=\lim_{l\to\infty} T_*\nu^{(n_l)}(l)=\lim_{l\to\infty}\nu^{(n_l)}(l)=\widetilde\nu.
\]
Thus the same argument above showing $\nu^{(0)}=\mu$ implies that $\widetilde\nu=\mu$, which contradicts $\rho(\widetilde\nu, \mu)\ge \varepsilon_0$. In summary, so far we have obtained the estimate \eqref{eq:nu_to_mu} except that we only did so for $\nu$. The final step is to  replace $\nu$ by $\nu^u$. To this end, it suffices to prove the following claim.  

If we let $\nu^{u,(n)} = \frac{1}{n} \sum_{k=0}^{n-1} T_*^k \nu^u$, then for any continuous $\varphi$, we claim that 
	\[
	\lim_{n\to\infty}\left(\int \varphi \, d\nu^{u,(n)} - \int \varphi \, d\nu^{(n)} \right)= 0,
	\] 
	where the convergence is also uniform in $\omega^{-,j}$ and $\nu^u$. Indeed, we have
\[
\int \varphi \, d\nu^{u,(n)} - \int \varphi \, d\nu^{(n)}=\frac{1}{n}\sum^{n-1}_{k=1}\left(\int\varphi \, d (T^k_*\nu^u)-\frac1{\mu^-_j(\Omega^-_j)}\int \varphi \, d(T^k_*\nu_j) \right),
\]
which implies that it suffices to show $\int\varphi \, d (T^k_*\nu^u)-\frac1{\mu^-_j(\Omega^-_j)}\int \varphi \, d(T^k_*\nu_j)\to 0 $, uniformly in $\omega^{-}$ and $\nu^u$. To this end, we have
\begin{align*}
	&\int\varphi \, d (T^k_*\nu^u)-\frac1{\mu^-_j(\Omega^-_j)}\int \varphi \, d(T^k_*\nu_j)\\
	&=\int\varphi\circ T^k\, d \nu^u-\frac1{\mu^-_j(\Omega^-_j)}\int \varphi\circ T^k \, d\nu_j\\
	&=\int\varphi\circ T^k\, d \nu^u-\frac1{\mu^-_j(\Omega^-_j)}\int \varphi\circ T^k \, d\big(\mu^-_j \times (\pi^+)_* \nu^u\big)\\
	&=\int_{W^u_\loc(\omega^{-,j})}\varphi\circ T^k\, d \nu^u-\int_{\Omega^+_j} \left(\frac1{\mu^-_j(\Omega^-_j)}\int_{\Omega^-_j}\varphi\circ T^k(\omega^-,\omega^+) \, d\mu^-_j (\omega^-)\right)\, d\big((\pi^+)_* \nu^u\big)(\omega^+)\\
	&=\int_{\Omega^+_j} \left(\varphi\circ T^k(\omega^{-,j}, \omega^+)-\frac1{\mu^-_j(\Omega^-_j)}\int_{\Omega^-_j}\varphi\circ T^k(\omega^-,\omega^+) \, d\mu^-_j (\omega^-)\right)\, d\big((\pi^+)_* \nu^u\big)(\omega^+).
	\end{align*}
Clearly, we have
\begin{align*}
	&\left|\varphi\circ T^k(\omega^{-,j}, \omega^+)-\frac1{\mu^-_j(\Omega^-_j)}\int_{\Omega^-_j}\varphi\circ T^k(\omega^-,\omega^+) \, d\mu^-_j (\omega^-)\right|\\
	&=\left|\frac1{\mu^-_j(\Omega^-_j)}\int_{\Omega^-_j}\varphi\circ T^k(\omega^{-,j}, \omega^+)-\varphi\circ T^k(\omega^-,\omega^+) \, d\mu^-_j (\omega^-)\right|\\
	&\le \frac1{\mu^-_j(\Omega^-_j)}\int_{\Omega^-_j}\left|\varphi\circ T^k(\omega^{-,j}, \omega^+)-\varphi\circ T^k(\omega^-,\omega^+) \right|\, d\mu^-_j (\omega^-),
	\end{align*}
which tends to $0$ uniformly in $\omega^+$ as $k\to \infty$ by uniform continuity of $\varphi$ and the fact $d(T^k(\omega^{-,j},\omega^+) ,T^k(\omega^-,\omega^+) )\to 0$ as $k\to\infty$. The estimates above imply that $\int\varphi \, d (T^k_*\nu^u)-\frac1{\mu^-_j(\Omega^-_j)}\int \varphi \, d(T^k_*\nu_j)\to 0 $, uniformly in $\omega^{-,j}$ and $\nu^u$, as desired.
\end{proof}

\begin{lemma}\label{l.cesaro2}
Let $E\in J_\eta$ and $\omega^{-,j} \in \Omega^-_j$. Suppose $m$ is a probability measure on $W^u_\loc(\omega^{-,j}) \times \R\bbP^1$, whose projection $\nu^u$ to $W^u_\loc(\omega^{-,j})$ satisfies the assumptions of Lemma~\ref{l.cesaro1}. Then, we have
	$$
	\frac{1}{n} \sum_{k=0}^{n-1} (F^{E})^k_* m \to m^{u,E}
	$$
	in the weak-$*$ topology, uniformly in $\omega^{-,j}$, $E\in J_\eta$, and such choices of $m$. 
\end{lemma}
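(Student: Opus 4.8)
The plan is a Krylov--Bogolyubov plus uniqueness argument. Write $m^{(n)}:=\frac1n\sum_{k=0}^{n-1}(F^E)^k_* m$. Since $\Omega\times\R\bbP^1$ is a compact metric space, the set of Borel probability measures on it, with the weak-$*$ topology, is compact and metrizable; fix a compatible metric $\rho$. Thus $\{m^{(n)}\}$ has weak-$*$ accumulation points, and it suffices to prove that every such accumulation point equals $m^{u,E}$; the uniformity in $\omega^{-,j}$, $E\in J_\eta$, and $m$ will then be extracted by the same contradiction-via-compactness device used at the end of the proof of Lemma~\ref{l.cesaro1}.

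Let $\bar m$ be an accumulation point, say $m^{(n_l)}\to\bar m$. Two facts pass to $\bar m$ for free. First, $\bar m$ is $F^E$-invariant: the telescoping estimate from the proof of Lemma~\ref{l.cesaro1} gives $\rho\big((F^E)_* m^{(n)},m^{(n)}\big)=O(1/n)$, so $(F^E)_*\bar m=\bar m$. Second, the projection of $\bar m$ to $\Omega$ is $\mu$: the projection of $m^{(n)}$ equals $\frac1n\sum_{k=0}^{n-1}T^k_*\nu^u$, which converges to $\mu$ by Lemma~\ref{l.cesaro1}, as $\nu^u$ satisfies \eqref{eq:eqiv_to_mu+}.

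The heart of the matter is that $\bar m$ is a $u$-state; Lemma~\ref{p:unique_u-state} then identifies it as $m^{u,E}$ and we are done. Recall from Section~\ref{ss:LDT} that we may assume $A^E$ depends only on the past, so $H^{u,E}_{\omega,\omega'}=I_2$ and a $u$-state is precisely an $F^E$-invariant measure whose fiber disintegration is $\mu$-a.e.\ constant on local unstable sets. By the dichotomy recalled in the proof of Lemma~\ref{p:unique_u-state}, $\bar m=t\,m^{u,E}+(1-t)\,m^{s,E}$ for some $t\in[0,1]$, so it remains to see $t=1$, i.e.\ that no stable component survives the averaging. Fix a fiber disintegration $\{m_\omega\}$ of $m$. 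At a point $\omega'=T^k\omega$ with $\omega\in W^u_\loc(\omega^{-,j})$, the fiber measure of $(F^E)^k_* m$ is $(A^E_k(\omega))_* m_\omega$; because $A^E$ depends only on the past, the matrix $A^E_k(T^{-k}\omega')$ in fact depends only on $\pi^-\omega'$, and since $L(E)>0$ on $J_\eta$ its most expanded direction converges, as $k\to\infty$, to the unstable Oseledec direction $E^u(\omega')$ (itself a function of $\pi^-\omega'$). Hence $(A^E_k(\omega))_* m_\omega\to\delta_{E^u(\omega')}$, at an exponential rate coming from uniform hyperbolicity on $J_\eta$, for every $\omega$ with $m_\omega(\{E^s(\omega)\})=0$. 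Finally, restricted to $W^u_\loc(\omega^{-,j})$ the section $E^s$ is a measurable function of $\pi^+\omega$ alone, and the image of $\mu^+_j$ under it is non-atomic because $E\notin\CF_f$ (no $su$-state; cf.\ \cite{ADZ}); combined with $(\pi^+)_*\nu^u\sim\mu^+_j$ this yields $\int m_\omega(\{E^s(\omega)\})\,d\nu^u(\omega)=0$, so the stable part of each $(F^E)^k_* m$ is negligible and $t=1$. The real work is concentrated here: propagating the bounded-distortion/equivalence hypothesis through the cocycle to show that the fiber measures of the Ces\`aro averages relax to the unstable section --- rather than retaining a stable-holonomy-invariant piece --- and doing so at an exponential rate; this is the step that simultaneously uses $L(E)>0$ on $J_\eta$, $E\notin\CF_f$, the past-dependence of $A^E$, and the quantitative local product structure.

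For uniformity, suppose it fails. Then there are $\varepsilon_0>0$, $n_l\uparrow\infty$, indices $j_l$, energies $E_l\in J_\eta$, and probability measures $m_l$ on $W^u_\loc(\omega^{-,j_l})\times\R\bbP^1$ with projections obeying \eqref{eq:eqiv_to_mu+} such that $\rho\big(m_l^{(n_l)},m^{u,E_l}\big)\ge\varepsilon_0$. Passing to subsequences (compactness of $J_\eta$, finiteness of $\CA$, compactness of the measure space) we may assume $E_l\to E_0\in J_\eta$, $m_l^{(n_l)}\to\widetilde m$, and, by the continuity in Lemma~\ref{p:unique_u-state}, $m^{u,E_l}\to m^{u,E_0}$. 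Since $\|A^{E_l}-A^{E_0}\|_\infty\to0$ and $H^{u,E}_{\omega,\omega'}=I_2$ for all parameters, the invariance and projection of the second paragraph and the $u$-state conclusion of the third pass to the limit --- using the uniformity in $\omega^{-,j}$ and $\nu^u$ in Lemma~\ref{l.cesaro1} together with the stability of $u$-states under such limits, \cite[Lemma 4.3]{backes} --- so $\widetilde m$ is a $u$-state of $A^{E_0}$, whence $\widetilde m=m^{u,E_0}$, contradicting $\rho(\widetilde m,m^{u,E_0})\ge\varepsilon_0$. This yields the uniform convergence and completes the proof.
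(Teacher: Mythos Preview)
Your overall architecture (Krylov--Bogolyubov, show the limit is an $F^E$-invariant measure over $\mu$, identify it as a $u$-state, invoke uniqueness) matches the paper's, and your uniformity argument is fine. The gap is in the third paragraph, where you try to show $t=1$ by a dynamical Oseledec-type argument.

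First, you invoke ``uniform hyperbolicity on $J_\eta$'' to get an exponential rate for $(A^E_k(\omega))_* m_\omega\to\delta_{E^u(T^k\omega)}$. There is no uniform hyperbolicity here: $J_\eta$ is designed to cover most of the spectrum $\Sigma$, and on $\Sigma$ the cocycle is certainly not uniformly hyperbolic. All that is available is PLE, i.e.\ $L(E)>0$, which is an almost-everywhere statement and gives neither a uniform rate nor pointwise convergence on the single leaf $W^u_\loc(\omega^{-,j})$ (a $\mu$-null set). Second, and more seriously, your non-atomicity step is invalid for general $m$. The lemma allows \emph{any} probability measure $m$ with projection $\nu^u$; in particular one may take $m_\omega=\delta_{E^s(\omega)}$ (wherever $E^s$ is defined), and then $m_\omega(\{E^s(\omega)\})=1$ identically, so $\int m_\omega(\{E^s(\omega)\})\,d\nu^u=1$, not $0$. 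Non-atomicity of $(E^s)_*\mu^+_j$ only yields your conclusion when $m_\omega$ is \emph{independent} of $\omega$ (e.g.\ the lift to $W^u_\loc(\omega^{-,j})\times\{v\}$ used later in the paper), because only then can one Fubini the graph condition $v=E^s(\omega)$. For the lemma as stated your argument does not go through.

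The paper avoids all of this with a purely structural argument that never touches Oseledec directions. Using that $A^E$ depends only on the past, it projects the accumulation point $\widetilde m$ to $\widetilde m^-:=(\pi^-\times\mathrm{id})_*\widetilde m$ on $\Omega^-\times\R\bbP^1$, checks directly from the Ces\`aro representation that $\widetilde m^-$ is $F^E_-$-invariant, and then applies \cite[Lemma~3.4]{AV} to recover $\widetilde m_\omega=\lim_n (A^E_{-n}(\pi^-(T^n\omega)))_*\widetilde m^-_{\pi^-(T^n\omega)}$; by $F^E_-$-invariance this limit collapses to $\widetilde m^-_{\pi^-\omega}$, so the disintegration depends only on $\pi^-\omega$ and $\widetilde m$ is a $u$-state. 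This works for \emph{every} $m$ as in the statement, with no hyperbolicity, no Oseledec regularity on the leaf, and no non-atomicity input.
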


\begin{proof}
	By compactness of $\Omega\times\R\bbP^1$ and the same reasoning as in the proof of Lemma~\ref{l.cesaro1}, we see that weak-$*$ accumulation points of $\frac{1}{n} \sum_{k=0}^{n-1} (F^E)^k_* m$ exist. 
	
	Consider such an accumulation point and denote it by $\widetilde m$.  Without loss of generality, we may just assume that $\frac{1}{n} \sum_{k=0}^{n-1} (F^E)^k_* m$ converges to $\widetilde m$. Clearly, $\widetilde m$ is invariant under $F^E$ and it projects to an accumulation point of  $\frac{1}{n} \sum_{k=0}^{n-1} T^k_* \nu^u$ in the first component. By our assumption on $\nu^u$ and Lemma~\ref{l.cesaro1}, $\frac{1}{n} \sum_{k=0}^{n-1} T^k_* \nu^u$ converges to $\mu$ as $n\to\infty$. That is, $\widetilde m$ projects to $\mu$ in the first coordinate. 
	
	Next, we show that any disintegration $\{\widetilde m_\omega\}_{\omega\in\Omega}$ of $\widetilde m$ is invariant under the unstable holonomy. By our assumption that $A$ depends only on the past, this is equivalent to having $\widetilde m_\omega=\widetilde m_{\omega'}$ for $\omega$ and $\omega'$ in the same local unstable set. To this end, we let $\widetilde m^-=(\pi^-\times id)_*(\widetilde m)$, which is a measure on $\Omega^-\times \R\bbP^1$. First, we note that $A^E$ naturally descends to a map on $\Omega^-$ since it depends only on the past. That is, $A^E(\omega)=\tilde A^E(\pi^-\omega)$ for some map $\tilde A^E:\Omega^-\to\SL(2,\R)$. Abusing notation slightly, we still let $A^E$ denote the map on $\Omega^-$. Let $F^{E}_-$ denote the projectivized action of $(T_-, (A^E)^{-1})$ on $\Omega^-\times \R\bbP^1$ where $T_-$ is the right shift on $\Omega^-$. Then we naturally have 
	$$
	(\pi^-\times id)\circ (T,A^E)^{-1}=(T_-,(A^{E})^{-1})\circ (\pi^-\times id),
	$$
	which implies $(\pi^-\times id)\circ (F^E)^{-1}=F^{E}_-\circ (\pi^-\times id)$. We claim that $\widetilde m^-$ is invariant under $F^E_-$. Indeed,
	\begin{align*}
		(F^{E}_-)_*\widetilde m^-&=(F^{E}_-)_*(\pi^-\times id)_*(\widetilde m)\\
		&=(F^{E}_-\circ(\pi^-\times id))_*\big(\lim\limits_{n\to\infty} \frac{1}{n} \sum_{k=0}^{n-1} (F^E)^k_* m\big)\\
		&=((\pi^-\times id)\circ (F^E)^{-1})_*\big(\lim\limits_{n\to\infty} \frac{1}{n} \sum_{k=0}^{n-1} (F^E)^k_* m\big)\\
		&=(\pi^-\times id)_*\big(\lim\limits_{n\to\infty} \frac{1}{n} \sum_{k=0}^{n-1} (F^E)^{k-1}_* m\big)\\
		&=(\pi^-\times id)_*\widetilde m\\
		&=\widetilde m^-.
		\end{align*}
	Let $\{\widetilde m^-_{\omega^-}\}$ be a disintegration of $\widetilde m^-$. By $F^E_-$-invariance, we then have 
	$$
	(A^{E}(T_-\omega^-)^{-1})_*\widetilde m^-_{\omega^-}=\widetilde m^-_{T_-\omega^-},
	$$
	or equivalently,
	\begin{equation}\label{eq:F_-invariance}
		(A^{E}_{-1}(\omega^-))_*\widetilde m^-_{\omega^-}=\widetilde m^-_{T_-\omega^-}.
	\end{equation}
	By a special case of \cite[Lemma 3.4]{AV}, the disintegration $\{\widetilde m_\omega\}$ of  $\widetilde m$ can be recovered from the disintegration $\{\widetilde m^-_{\omega^-}\}$ of $\widetilde m^-$ via 
	$$
	\widetilde m_\omega=\lim\limits_{n\to\infty}(A^E_{-n}(\pi^-(T^n\omega)))_*\widetilde m^-_{\pi^-(T^n\omega)}
	$$
    for $\mu$-a.e. $\omega$. But \eqref{eq:F_-invariance} and the fact $T_-\circ\pi^-=\pi^-\circ T^{-1}$ imply that
    $$
(A^E_{-n}(\pi^-(T^n\omega)))_*\widetilde m^-_{\pi^-(T^n\omega)}=\widetilde m^-_{
	\pi^-\omega}
	$$
	for all $n\ge 1$. Thus we have
	$$
		\widetilde m_\omega=\widetilde m^-_{\pi^-\omega},
	$$
	which stays constant for all $\omega'\in W^u_\loc(\pi^-\omega)$. This concludes the proof that $\widetilde m$ is a $u$-state. Therefore, by uniqueness of the $u$-state, it must be equal to $m^{u,E}$. Uniform convergence follows again from uniqueness of the limit as in the proof of Lemma~\ref{l.cesaro1}.
\end{proof}

The final piece needed to prove Theorem~\ref{t:ldt_additive} is the following large deviation estimate for martingale difference sequences; see, for example, \cite{alon, BS}.

\begin{lemma}\label{l.azuma}
	Let $\{d_n\}_{n\ge 1}$ be a martingale difference sequence adapted to some filtration. That is, $\{d_n\}_{n\ge 1}$ is a sequence of variables such that
	\begin{equation}\label{e.azumass1}
		\E (d_{n+1} | \mathcal{F}_n) = 0,
	\end{equation}
	where $\mathcal{F}_n$ is the $\sigma$-algebra generated by $d_1 , \ldots , d_n$. Then for every $\varepsilon>0$, there is a $c>0$ such that for all $N\ge 1$,
	\begin{equation}\label{e.azumass3}
		\bbP\left[\bigg|\sum^{N}_{1}d_n\bigg|>\varepsilon N^{\frac12}\left(\sum^N_{1}\|d_n\|^2_\infty\right)^\frac12\right]<e^{-c\varepsilon^2N}.
	\end{equation}
\end{lemma}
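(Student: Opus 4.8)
The plan is to run the classical Chernoff--Cram\'er exponential-moment argument, using Hoeffding's lemma in its conditional form along the given filtration. Throughout, write $S_N=\sum_{n=1}^N d_n$, $b_n=\|d_n\|_\infty$, and $B_N^2=\sum_{n=1}^N b_n^2$, so that the event in \eqref{e.azumass3} is $\{|S_N|>\varepsilon N^{1/2}B_N\}$.

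The one substantive input is the conditional Hoeffding bound: if $X$ is a random variable with $\E(X\mid\mathcal G)=0$ and $|X|\le b$ almost surely, then $\E(e^{\lambda X}\mid\mathcal G)\le e^{\lambda^2 b^2/2}$ for every $\lambda\in\R$. This is proved exactly as in the unconditional case --- bound $e^{\lambda x}$ on $[-b,b]$ above by the affine interpolant at the endpoints (convexity), take conditional expectations, and then check by a one-variable estimate that the resulting function of $\lambda$ vanishes to second order at $0$ and has second derivative bounded by $b^2$, hence is at most $\lambda^2 b^2/2$. I would apply this with $X=d_n$ and $\mathcal G=\mathcal F_{n-1}$: the pointwise bound $|d_n|\le b_n$ survives conditioning, and \eqref{e.azumass1} supplies the conditional mean-zero hypothesis.

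Next I would iterate. Since $S_{n-1}$ is $\mathcal F_{n-1}$-measurable, the tower property gives
\[
\E\big(e^{\lambda S_n}\big)=\E\Big(e^{\lambda S_{n-1}}\,\E\big(e^{\lambda d_n}\mid\mathcal F_{n-1}\big)\Big)\le e^{\lambda^2 b_n^2/2}\,\E\big(e^{\lambda S_{n-1}}\big),
\]
so by induction $\E\big(e^{\lambda S_N}\big)\le e^{\lambda^2 B_N^2/2}$. Markov's inequality then yields $\bbP(S_N>s)\le e^{-\lambda s+\lambda^2 B_N^2/2}$ for all $s,\lambda>0$, and optimizing in $\lambda$ (namely $\lambda=s/B_N^2$) gives $\bbP(S_N>s)\le e^{-s^2/(2B_N^2)}$. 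Running the same argument for $\{-d_n\}_{n\ge1}$, which is again a martingale difference sequence with the same bounds $b_n$ and the same generated filtration, and adding the two estimates, $\bbP(|S_N|>s)\le 2e^{-s^2/(2B_N^2)}$. Finally set $s=\varepsilon N^{1/2}B_N$: the exponent collapses to $\varepsilon^2 N/2$, giving
\[
\bbP\Big(\big|\textstyle\sum_{1}^{N}d_n\big|>\varepsilon N^{1/2}B_N\Big)\le 2\,e^{-\varepsilon^2 N/2},
\]
which is \eqref{e.azumass3} with $c$ any constant below $1/2$, the harmless factor $2$ being absorbed into the exponential for $N$ beyond a threshold depending only on $\varepsilon$ (the only regime that enters the applications in Section~\ref{sec:largedeviations}).

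I do not expect a real obstacle here: this is the standard Azuma--Hoeffding inequality and the chain of steps above is routine. The only point that deserves care is deploying Hoeffding's lemma in its conditional form correctly --- that is, verifying that both the almost sure sup-norm bound and the martingale property pass cleanly through the conditioning on $\mathcal F_{n-1}$ --- after which everything reduces to bookkeeping with the moment generating function.
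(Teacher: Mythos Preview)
Your argument is correct and is precisely the classical Azuma--Hoeffding derivation (conditional Hoeffding lemma, iterated via the tower property, then Chernoff/Markov with optimization in $\lambda$). The paper does not give its own proof of this lemma; it simply cites \cite{alon, BS} and, in the remark following it, \cite{A, lesigne}. So there is nothing to compare against beyond noting that your proof is the standard one those references contain.

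Your handling of the residual factor $2$ is also appropriate. The paper's formulation is slightly loose on this point anyway --- the remark after the lemma asserts one may take $c=1$ in \eqref{e.azumaconc}, which likewise suppresses the same constant --- and, as you observe, only the large-$N$ regime is invoked in the proofs of Theorem~\ref{t:ldt_additive} and Lemma~\ref{l.additiveldt+2}, where the factor is harmlessly absorbed.
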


\begin{remark}
	Note that if there is $a$ such that 
	\begin{equation}\label{e.azumass2}
		\sup_{n \ge 1} \|d_n\|_\infty< a,
	\end{equation}
	then \eqref{e.azumass3} becomes
	\begin{equation}\label{e.azumaconc}
		\bbP \Big( \Big| \frac{1}{N} \sum^N_1 d_n \Big| > \varepsilon \Big) \le e^{-\frac{c\varepsilon^2}{2a^2} n},
	\end{equation}
	which is already known; compare, for example, \cite{A} or \cite{lesigne}. In fact, $c$ may be taken to be $1$ in \eqref{e.azumaconc}.
\end{remark}

We are now ready to prove Theorem~\ref{t:ldt_additive}.

\begin{proof}[Proof of Theorem~\ref{t:ldt_additive}]
	
	By the bounded distortion property of $\mu$, there exists $C\ge 1$ such that for each $1\le j\le \ell$ and $\mu^-$-almost every $\omega^{-,j}\in\Omega^-_j$, 
	\begin{equation}\label{eq:eqiv_to_mu+0}
		C^{-1} \le\frac{d(\pi^+_* \mu^{u}_{\omega^{-,j}})}{d\mu^+_j}\le C.
	\end{equation}
	Since the estimates on a zero $\mu^-$-measure subset of $\Omega^-$ are negligible when we pass the estimates to $\mu$ via \eqref{eq:disint_mu}, we may without loss of generality assume that \eqref{eq:eqiv_to_mu+0} holds uniformly for all $\omega^{-,j}\in\Omega^-_j$. Thus $\mu^u_{\omega^{-,j}}$ satisfies the assumption of Lemma~\ref{l.cesaro1}. Choose any $v\in\R\bbP^1$ and let $m$ be the lift of $\mu^u_{\omega^{-,j}}$ to $W^u_\loc(\omega^{-,j})\times \{ v\}$. Then $m$ satisfies the assumption of Lemma~\ref{l.cesaro2}. Since $\pi^+T=T_+\pi^+$, we have $\pi^+_*T_*\mu^u_{\omega^{-,j}}=(T_+)_*\pi^+_*\mu^u_{\omega^{-,j}}$ which implies
	\[
	C^{-1}\le\frac{d(\pi^+_*T_*\mu^u_{\omega^{-,j}})}{d((T_+)_*\mu^+_j)} \le C.
	\]
	By $T_+$-invariance of $\mu^+$, for $ji$ admissible we have $((T_+)_*\mu^+_j)|_{[0;i]^+}=\mu^+_i$. Hence, the above estimate implies that $\frac{1}{\mu^u_{\omega^{-,j}}([0:ji])}T_*\mu^u_{\omega^{-,j}}|_{[0;i]}$ is a probability measure that satisfies \eqref{eq:eqiv_to_mu+0}, hence the assumption of Lemma~\ref{l.cesaro1}. Note that 
	\[
	F^E_*m|_{[0;i]\times\R\bbP^1}=(F^E|_{[0;j,i]\times\R\bbP^1})_*m=(F^E|_{(W^u_\loc(w^{-,j})\cap[0;j,i])\times\R\bbP^1})_*m
	\]
	is the lift of $T_*\mu^u_{\omega^{-,j}}|_{[0:i]}$ to  $(T(W^u_\loc(\omega^{-,j}))\cap[0:i])\times \{A^E(\omega)v\}$, where $A^E(\omega)v$ is a constant since $\omega\in W^u_\loc(\omega^{-,j})\cap[0;j,i]$ and $A^E$ depends only on the past. Thus 
	\[
	\frac{1}{\mu^u_{\omega^{-,j}}([0:ji])}F^E_*m|_{[0:i]\times\R\bbP^1}=	\frac{1}{m([0:j,i]\times\R\bbP^1)}F^E_*m|_{[0:i]\times\R\bbP^1}
	\] 
	satisfies the assumptions of Lemma~\ref{l.cesaro2} as well. By induction, if $ji_1\cdots i_s$ is admissible, then
	\[
	\frac{1}{\mu^u_{\omega^{-,j}}([0:j,i_1,\ldots,i_s])}T^s_*\mu^u_{\omega^{-,j}}|_{[0;i_1,\ldots, i_s]}
	\] 
	satisfies  the assumption of Lemma~\ref{l.cesaro1}. Likewise,
	\[
	(F^E)^s_*m|_{[0;i_1,\ldots,i_s]}=\big((F^E)^s|_{W^u_\loc(w^{-,j})\cap[0;j,i_1,\ldots,i_s]}\big)_*m
	\] 
	is the lift of $T^s_*\mu^u_{\omega^{-,j}}|_{[0:i_1,\ldots, i_s]}$ to  $T^s(W^u_\loc(\omega^{-,j}))\cap[0:i_1,\ldots, i_s]\times \{A_s^E(\omega)v\}$, where again $A_s^E(\omega)v$ stays constant since $\omega\in W^u_\loc(\omega^{-,j})\cap[0;j,i_1,\ldots, i_{s-1},i_s]$. Hence,
	\[
	\frac{1}{m([0:j,i_1,\ldots, i_s]\times\R\bbP^1)}(F^E)^s_*m|_{[0:i_1,\ldots, i_s]}
	\]
	satisfies the assumption of Lemma~\ref{l.cesaro2}. Now for each $\omega\in W^u_\loc(\omega^{-,j})$, we define
	 \[
	 D_i(\omega) := \big([0;\omega_0,\ldots,\omega_{i}] \cap W^u_\loc(\omega^{-,j}) \big)\times \R\bbP^1.
	 \]
	Note that for every integrable function $\psi$, we have
	\[
	\int_{D_{i-1}(T\omega)}\psi(\tilde\omega)\,  d(F^E)^i_*m(\tilde\omega)=\int_{D_i(\omega)}\psi\circ (F^E)^i(\tilde\omega)\, dm(\tilde\omega),
	\]
	where $\tilde\omega$ is the variable of the integrand, which should not be confused with $\omega$. We leave $\tilde\omega$ implicit whenever is it clear from the context. Consider a H\"older continuous function $\varphi\in C^\alpha(\Omega \times \R\bbP^1,\R)$. By Lemma~\ref{l.cesaro2} and the facts described above, given $\varepsilon > 0$, there is $N \ge 1$ such that for every $i\ge 0$ and every $\omega\in\Omega$,
	\begin{equation}\label{e.almostmartingale3}
		\left|\frac1{m(D_i(\omega) )}\int_{D_i(\omega)} \frac1NS_N(\varphi\circ (F^E)^i) \, dm  - \int \varphi \, dm^{u,E} \right| < \frac\varepsilon4.
	\end{equation}
 Let us rewrite this as follows. Define $Y_ i: W^u_\loc(\omega^{-,j}) \to \R$ by
	$$
	Y_i(\omega) = \frac{1}{m(D_i(\omega))} \int_{D_i(\omega)} S_i(\varphi) \, dm,
	$$
	which depends only on $\omega_0,\ldots, \omega_i$. Let $\mathcal{B}_i$ be the $\sigma$-algebra generated by $Y_0 , \ldots , Y_i$, which is basically generated by the cylinder sets $[0;n_0,\ldots, n_i]$. In particular, the conditional expectation of $Y_{i+N}$ respect to $\CB_i$ is 
	\begin{align}\label{eq:ConExpection}
	\nonumber\E\left(Y_{i+N} \Big| \CB_i\right)&=\frac{1}{m(D_i(\omega))}\sum_{\tilde\omega\in D_i(\omega)}m(D_{i+N}(\tilde\omega))Y_{i+N}(\tilde\omega)\\
	&=\frac{1}{m(D_i(\omega))}\sum_{\tilde\omega\in D_i(\omega)}\int_{D_{i+N}(\tilde\omega)}S_{i+N}(\varphi) \,dm\\
	\nonumber&=\frac{1}{m(D_i(\omega))}\int_{D_i(\omega)}S_{i+N}(\varphi) \, dm.
	\end{align}
	Clearly, $\E\left(Y_i\Big|\CB_i\right)=Y_i$. Thus the estimate \eqref{e.almostmartingale3} reads
	\begin{equation}\label{e.almostmartingale}
		\left| \frac{1}{N} \E \left( Y_{i+N} - Y_i \Big| \mathcal{B}_i \right)  - \int \varphi \, dm^{u,E} \right| < \frac\varepsilon4.
	\end{equation}
If we define
	$$
	X_n = Y_{nN} - \sum_{k=1}^n \E \left( Y_{kN} - Y_{(k-1)N} | \mathcal{B}_{(k-1)N} \right),
	$$
	it follows that $\{ X_n \}$ is a martingale, that is, \eqref{e.azumass1} holds for $\{ X_n\}$. Indeed, 
\begin{align}\label{eq:MartingaleDiff}
\nonumber X_{n+1}-X_n&=Y_{(n+1)N}-Y_{nN}-\E\Big(Y_{(n+1)N}-Y_{nN}\Big|\CB_{nN}\Big)\\
&=Y_{(n+1)N}-\E\Big(Y_{(n+1)N}\Big|\CB_{nN}\Big).
\end{align}
Since the $\sigma$-algebra $\CF_n$ generated by $X_1,\ldots, X_n$ is precisely $\CB_{nN}$, the above equation implies that for all $n\ge 1$,
\[
\E\left(X_{n+1}-X_n\Big| \CF_n\right)=0.
\]
We claim that \eqref{e.azumass2} also holds true for $\{ X_n \}$ because of the H\"older continuity of $\varphi$. Indeed, it is clear that $|X_1|=|Y_N-\E(Y_N-Y_0\Big|\CB_0)|=|Y_N-Y_0-\E(Y_N\Big|\CB_0)|<CN$. By \eqref{eq:ConExpection} and \eqref{eq:MartingaleDiff}, $X_{n+1}-X_n$ may be rewritten as
\[
\frac{1}{m(D_{(n+1)N}(\omega))}\int_{D_{(n+1)N}(\omega)}S_{(n+1)N}(\varphi) \, dm - \frac{1}{m(D_{nN}(\omega))}\int_{D_{nN}(\omega)}S_{(n+1)N}(\varphi) \, dm.
\]
Note that $A^E_i(\omega')\vec v$ is independent of $\omega'$ for all $\omega'\in [0;\omega_0,\ldots,\omega_{nN}]\cap W^u_\loc(\omega^{-,j})$ and all $0\le i\le nN$. So we may denote the projection of  such $A^E_i(\omega')\vec v$ in $\R\bbP^1$ by $v_i$. Now for any $\omega', \omega''\in[0;\omega_0,\ldots,\omega_{nN}]\cap W^u_\loc(\omega^{-,j})$, we have 
\begin{align*}
	&\left|S_{(n+1)N}(\varphi)(\omega',v)-S_{(n+1)N}(\varphi)(\omega'',v)\right|\\
	\le&\left(\sum^{nN-1}_{i=0}+\sum^{(n+1)N-1}_{nN}\right)\Big|\varphi\big((F^E)^{i}(\omega',v)\big)-\varphi\big((F^E)^{i}(\omega'',v)\big)\Big|
	\\
	\le&\sum^{nN-1}_{i=0}\Big|\varphi\big((F^E)^{i}(\omega',v)\big)-\varphi\big((F^E)^{i}(\omega'',v)\big)\Big|+CN\\
	\le&\sum^{nN-1}_{i=0}\big|\varphi\big(T^i\omega',v_i\big)-\varphi\big(T^i\omega'',v_i\big)\big|+CN\\
	\le&C\sum^{nN-1}_{i=0}d(T^i\omega',T^i\omega'')^\alpha+CN\\
	\le&C\sum^{nN-1}_{i=0}e^{-\alpha(nN-i)}+CN\\
	\le&C(e^{-\alpha}+N),
	\end{align*}
	where $C$ may vary but is independent of $\omega$, $\omega^{-,j}$, $v$, or $E$. Let $a=C(e^{-\alpha}+N)$. This implies that 
	\[
	\left|X_1\right|<a\mbox{ and }\left|X_{n+1}-X_n\right|<a\mbox{ for all }n\ge 1.
	\]
	Thus, it follows from Lemma~\ref{l.azuma} that for every $\delta>0$ and all $n\ge 1$, we have
	\begin{equation}\label{eq:LDT1}
	\mu^u_{\omega^{-,j}}\left\{\omega:\left|\frac1nX_n\right|>\delta\right\}=\bbP\left(\left|\frac1nX_n\right|>\delta\right)<e^{-\frac{\delta^2}{2a^2}n}.
	\end{equation}
	Suppose $\omega\in W^u_\loc(\omega^{-,j})$ satisfies 
	\begin{equation}\label{eq:largeDevi}
	\left|\frac{1}{nN}S_{nN}(\varphi)(\omega)-\int\varphi\, dm^{u,E}\right|>\varepsilon.
	\end{equation}
	Then by the same argument above bounding $|X_{n+1}-X_n|$, we have for all $\omega'\in [0;\omega_0,\ldots,\omega_{nN}]\cap W^u_\loc(\omega^{-,j})$ that
	\[
	\left|S_{nN}(\phi)(\omega,v)-S_{nN}(\phi)(\omega',v)\right|<C.
	\] 
	Combining \eqref{eq:largeDevi} and the estimate above, we obtain for all $n\ge N_0$ that 
	\[
	\left|\frac1{nN}Y_{nN}(\omega)-\int\varphi\, dm^{u,E}\right|>\frac\varepsilon2,
	\]
	where $N_0$ depends only on $\varphi$ and $\varepsilon$. By the estimate above and \eqref{e.almostmartingale}, we have for all $\omega$ satisfying \eqref{eq:largeDevi} that
	\begin{align*}
		\left|\frac1{nN} X_n\right|
		&=\left|\frac1{nN}Y_{nN} - \frac1{n}\sum_{k=1}^n\frac1N\E \left( Y_{kN} - Y_{(k-1)N} \Big| \mathcal{B}_{(k-1)N} \right)\right|\\
		&\ge \left|\frac1{nN}Y_{nN} -\int\varphi\, dm^{u,E}\right|- \frac1n\sum_{k=1}^n \left|\frac1N\E \left( Y_{kN} - Y_{(k-1)N} | \mathcal{B}_{(k-1)N} \right)-\int\varphi\, dm^{u,E}\right|\\
		&>\frac\varepsilon4.
		\end{align*}
		Applying \eqref{eq:LDT1} to $\delta=\frac{\varepsilon N}{4}$, we obtain for all $n\ge N_0$,
		\begin{align*}
			\mu^u_{\omega^{-,j}} & \left\{\omega:	\left|\frac{1}{nN}S_{nN}(\varphi)(\omega)-\int\varphi\, dm^{u,E}\right|>\varepsilon\right\}\\
			& \le \mu^u_{\omega^{-,j}}\left\{\omega: \left|\frac1n X_n\right|\ge \frac{N\varepsilon}{4}\right\}\\
			& \le e^{-\frac{N\varepsilon^2}{16a^2}nN}.
			\end{align*}
			Absorbing the statement for $1\le n\le N_0$ into some constant $C=C(\varphi,\varepsilon)$, we obtain the desired estimate
			\[\mu^u_{\omega^{-,j}}\left\{\omega:	\left|\frac{1}{nN}S_{nN}(\varphi)(\omega)-\int\varphi\, dm^{u,E}\right|>\varepsilon\right\}<Ce^{-\frac{N\varepsilon^2}{16a^2}nN}
			\]
			along the sequence $\{nN\}$ with all the constants depending only on $\varphi$ and $\varepsilon$. 
		
		Replacing the sequence $\{ nN \}$ above by $ \{ nN + l \}$ for $l = 1, \ldots , N-1$ and running the same argument, we obtain the estimate \eqref{e.azumaconc} for all $n$.
\end{proof}

\section{Establishing Localization -- Proof of Theorem~\ref{t:main}}\label{sec:doubleresonances}

In this section, we derive localization from PLE and ULD, and hence prove Theorem~\ref{t:main}. The overall structure of the proof follows the outline of the treatment of the Bernoulli Anderson case in  \cite{bucaj2}. However, many of the key steps in the proof given in \cite{bucaj2} require the independence of the potential values and hence won't work in the same way in our far more general setting. Consequently, we will give full details whenever the absence of independence requires a new argument, while referring the reader to \cite{bucaj2} for details regarding the steps that extend without changes from the Bernoulli Anderson case to our setting.

For some of the key lemmas we have to use techniques from \cite{ADZ, BS}. 
For example, Lemma~\ref{l.additiveldt+2} generalizes \cite[Lemma 8.1]{BS}, while the proof of Proposition~\ref{prop:doubleRes}, the elimination of double resonances, combines various different techniques from \cite[Section 3]{ADZ}, \cite[Section 9]{BS}, and \cite[Section 6]{bucaj2}. 

\subsection{Bounding the Green Function }In the proof we shall use the H\"older continuity of the Lyapunove exponent, which is a direct consequence of PLE and ULD; see, for example, \cite[Theorem 4.1]{bucaj2}. We formulate it as follows. There exist constants $C > 0$, $\beta > 0$ depending on $f$ and $I$ such that
\begin{equation} \label{eq:holder}
	|L(E)-L(E')|
	\leq
	C|E-E'|^\beta
\end{equation}
for all $E,E' \in I$. First, we define 
\[g_n(\omega,E)=\frac1n\log\|A_n^E(\omega)\|.\] 
Let $\Gamma=\sup_{\omega\in\Omega,E\in I}\|A^E(\omega)\|$. Since $f\in C^\alpha(\Omega,\R)$, a direct computation shows that 
\begin{equation}\label{eq:holdergn}
	\left|g_n(\omega',E')-g_n(\omega,E)\right|<\Gamma^{n-1}\big(Cd(\omega,\omega')^\alpha+|E'-E|\big)
\end{equation}
uniformly for all $E\in I$. Then we have:

\begin{lemma}\label{l:additiveLDT}
	For every $\varepsilon>0$, there is a $n_0=n_0(\varepsilon)$ such that 
	\[
	\mu\Big\{\omega:\Big|L(E)-\frac1r\sum^{r-1}_{s=0}g_n(T^{ns+s_0}\omega,E)\Big|>\varepsilon\Big\}\le e^{-\frac{c\varepsilon^2}{n^2}r}
	\]
	for all $E\in I$, $r\ge C\frac{n}{\e}$, $s_0\in\Z$ and all $n\ge n_0$.
\end{lemma}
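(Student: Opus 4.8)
The plan is to regard $\frac1r\sum_{s=0}^{r-1}g_n(T^{ns+s_0}\cdot,E)$ as $\frac1r$ times a sum of $r$ ``block functions'' $\Phi_s(\omega):=g_n(T^{ns+s_0}\omega,E)$, each of which depends --- up to exponentially small errors governed by \eqref{eq:holdergn} --- only on the coordinates $\omega_j$ lying in a window of length $O(n)$ about $ns+s_0$, and then to run a martingale large deviation argument in the spirit of \cite[Section~6]{bucaj2}, \cite[Section~9]{BS}, and the proof of Theorem~\ref{t:ldt_additive}. Replacing $\omega$ by $T^{s_0}\omega$, which preserves $\mu$, I may and do take $s_0=0$.

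First I would match the block mean to $L(E)$. Since $\|A^E\|_\infty\le\Gamma$ and $\|A_n^E(\omega)\|\ge1$, one has $0\le g_n(\cdot,E)\le\log\Gamma$ and $|L(E)|\le\log\Gamma$ on $I$, so ULD applied with error $\varepsilon/4$ yields
\[
\Big|\int g_n(\omega,E)\,d\mu(\omega)-L(E)\Big|\le\frac{\varepsilon}{4}+2\log\Gamma\cdot\mu\big\{|g_n(\cdot,E)-L(E)|>\tfrac{\varepsilon}{4}\big\}\le\frac{\varepsilon}{4}+2C\log\Gamma\,e^{-cn},
\]
which is $<\varepsilon/2$ once $n\ge n_0(\varepsilon)$, uniformly in $E\in I$ because the ULD constants are uniform; by $T$-invariance the same bound holds for $\int\Phi_s\,d\mu$ for every $s$. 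Thus it is enough to bound $\mu\{|\frac1r\sum_{s=0}^{r-1}(\Phi_s-\int\Phi_s\,d\mu)|>\varepsilon/2\}$.

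Next I would construct the martingale. Let $\mathcal B_k$ be the $\sigma$-algebra generated by $\{\omega_j:j\le(k+1)n-1\}$, so that, modulo the Hölder tails, $\Phi_0,\dots,\Phi_k$ are $\mathcal B_k$-measurable while $\Phi_{k+1},\Phi_{k+2},\dots$ depend only on coordinates outside $\mathcal B_k$. As in the proof of Theorem~\ref{t:ldt_additive} I would set $Y_k=\E(\sum_{s=0}^{r-1}\Phi_s\mid\mathcal B_k)$ and form the compensated martingale $X_m=Y_m-\sum_{k=1}^m\E(Y_k-Y_{k-1}\mid\mathcal B_{k-1})$. The point of this compensated form, rather than the plain Doob martingale of $\sum_s\Phi_s$, is that $Y_k-Y_{k-1}$ has small effective support: the past part $\sum_{s<k}\Phi_s$ is $\mathcal B_{k-1}$-measurable and cancels, while the future part $\sum_{s>k}\Phi_s$ interacts with the coordinates in $\mathcal B_k\setminus\mathcal B_{k-1}$ only across one bounded-distortion interface, so that by \eqref{eq:bdd} --- applicable because the coordinate runs in question are disjoint --- its contribution is, after compensation, an error bounded by an absolute constant; one is left with $Y_k-Y_{k-1}=\Phi_k-\E(\Phi_k\mid\mathcal B_{k-1})+O(1)$. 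Hence $\sup_m\|X_m-X_{m-1}\|_\infty\le a:=Cn$, the factor $n$ entering because each block carries $\log\|A_n^E\|$, of size $O(n)$; the $O(1)$ errors come from summing the Hölder estimate \eqref{eq:holdergn} along an orbit, exactly as in the bound $|X_{n+1}-X_n|<C(e^{-\alpha}+N)$ in the proof of Theorem~\ref{t:ldt_additive}. Since $X_{r-1}-X_0=\sum_{s=0}^{r-1}\Phi_s-\sum_{k=1}^{r-1}\E(Y_k-Y_{k-1}\mid\mathcal B_{k-1})$, and each compensator term is within $n\varepsilon/8$ of $nL(E)$ (again by the ULD input of the previous paragraph), for $r\ge Cn/\varepsilon$ the event in question is contained in $\{|\frac1r(X_{r-1}-X_0)|>n\varepsilon/4\}$, and Lemma~\ref{l.azuma} in the form \eqref{e.azumaconc} with increment bound $a=Cn$ then gives a bound of the asserted form $e^{-c\varepsilon^2 r/n^2}$, the power of $n$ tracking the block length through the increment estimate; the hypothesis $r\ge Cn/\varepsilon$ is used to discard the finitely many small-$r$ cases and the shift by $1$. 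The lower tail is in fact easier and does not need this: submultiplicativity gives $\frac1r\sum_{s}g_n(T^{ns}\cdot,E)\ge g_{rn}(\cdot,E)$, so $\{\frac1r\sum_s\Phi_s<L(E)-\varepsilon\}\subseteq\{g_{rn}(\cdot,E)<L(E)-\varepsilon\}$, which ULD at scale $rn$ controls directly.

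The hard part is the martingale construction. In contrast to the Bernoulli Anderson case of \cite{bucaj2}, where the blocks $\Phi_s$ are genuinely independent, here they are only weakly dependent, so one must (i) use the bounded distortion property to reduce the interaction between a block and its future to a single bounded-distortion interface, and (ii) in the properly Hölder (not locally constant) case, cope with the fact that $\Phi_s$ is merely an approximate function of finitely many coordinates, the error being controlled via \eqref{eq:holdergn}. Bringing the martingale increments down to $O(n)$ --- rather than something that grows with $r$ --- is precisely what the compensated construction accomplishes, and it is here that the techniques of \cite{BS} and the proof of Theorem~\ref{t:ldt_additive} enter.
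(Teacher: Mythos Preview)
Your martingale construction contains a genuine gap. You set $Y_k=\E\bigl(\sum_{s=0}^{r-1}\Phi_s\,\big|\,\mathcal B_k\bigr)$, which is the Doob martingale of the fixed sum $\sum_s\Phi_s$; the tower property then forces $\E(Y_k-Y_{k-1}\mid\mathcal B_{k-1})=0$, so your ``compensated'' process $X$ coincides with $Y$ and the compensation is vacuous. (This is \emph{not} the construction in the proof of Theorem~\ref{t:ldt_additive}, where $Y_i$ is the conditional average of $S_i(\varphi)$, a different function for each $i$, so the compensators there are nonzero.) Your claim that ``each compensator term is within $n\varepsilon/8$ of $nL(E)$'' is therefore impossible, since the compensators vanish. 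More seriously, you must then bound the Doob increments $Y_k-Y_{k-1}$ directly, and here the future part fails: bounded distortion \eqref{eq:bdd} gives only $\bigl|\E(\Phi_s\mid\mathcal B_k)-\int\Phi_s\,d\mu\bigr|\le(C-1)\log\Gamma$ for each $s>k$ individually, with \emph{no decay} in the gap $s-k$. Summing over the $O(r)$ future indices yields an increment of order $r$, not $O(1)$ or $O(n)$, and Azuma with increments of order $r$ is useless.

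The paper circumvents this by a two-step reduction that you omit. First, $g_n$ is conjugated via the coboundary $h^s_n$ of \eqref{eq:reductionh}--\eqref{eq:conj_to_g+} to a function $g_n^+$ depending only on the future; this is where the hypothesis $r\ge Cn/\varepsilon$ actually enters, absorbing the telescoped boundary term $h^s_n(T^{rn}\omega)-h^s_n(\omega)=O(n)$ from \eqref{eq:bddhn}. Second, on $\Omega^+$ one applies Lemma~\ref{l.additiveldt+2}, whose martingale is not of Doob type but a Littlewood--Paley decomposition: one truncates $F=g_n^+/(Cn)$ at level $N\sim(\alpha n)^{-1}\log(K/\varepsilon)$ using H\"older continuity, writes the Birkhoff sum of $F_N=\int F+\sum_{i\le N}\Delta_iF$ as a martingale difference sequence $\{d_i\}$ with each $d_i$ containing at most $N$ translates of the $\Delta_kF$, and obtains $\sum_i\|d_i\|_\infty^2\le 4N^2/r$. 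With $K\sim\Gamma^n$ this produces exactly the rate $e^{-c\varepsilon^2r/n^2}$. Your lower-tail observation via submultiplicativity and ULD at scale $rn$ is correct, but the upper tail genuinely requires the one-sided reduction and the decomposition of Lemma~\ref{l.additiveldt+2}.
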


Using the techniques of \cite[Section 3]{ADZ}, we can reduce $g_n(\cdot, E)$ to a family of functions in $C^{\frac\alpha2}(\Omega^+,\R)$. We briefly introduce the reduction process since we need a bit more information than what is discussed in \cite[Section 3]{ADZ}. Following the discussion after Definition~\ref{def:indep_of_future}, for each $1\le j\le \ell$, we fix a choice $\omega^{(j)}\in[0;j]$ and consider $\varphi(\omega) = \omega^{(\omega_0)}\wedge \omega\in W^u_\loc(\omega^{(\omega_0)})\cap W^s_\loc(\omega)$. For each $n$, we construct a family of functions $h_n(\omega, E)$ via
\begin{equation}\label{eq:reductionh}
h^s_n(\omega,E) := \sum^{\infty}_{k=0} \big[ g_n(T^k\omega,E) - g_n(T^k\varphi(\omega),E) \big].
\end{equation}
Then 
\begin{equation}\label{eq:conj_to_g+}
	g^+_n(\omega ,E):=g_n(\omega,E)+h_n(T\omega, E)-h_n(\omega, E)
\end{equation}
is constant on every local stable set $W^s_\loc(\omega)$. By the estimate of \cite[Section 3]{ADZ}, we see that 
\[
\big|h(\omega,E)-h(\omega',E)\big|<C'\Gamma^{n-1}d(\omega,\omega')^{\frac\alpha2},
\]
where $C'=\frac{C}{1-e^{-\frac\alpha2}}$ and $C$ is from \eqref{eq:holdergn}. In particular, we may just say that 
\begin{equation}\label{eq:holdergn+}
|g^+_n(\omega, E)-g^+_n(\omega',E)|<C\Gamma^{n-1}d(\omega,\omega')^{\frac\alpha2}.
\end{equation}
Moreover, choosing $N=\lceil\frac{\log\Gamma}{\alpha}n\rceil$, we see that 
\begin{align}\label{eq:bddhn}
	\nonumber |h^s_n(\omega) |&\le  \sum^{N}_{k=0} \big|g_n(T^k\omega) - g_n(T^k\varphi(\omega)) \big|+\big|\sum^\infty_N\big|g_n(T^k\omega) - g_n(T^k\varphi(\omega))\big|\\
	&\le 2\Gamma N+Ce^{-\alpha N}\Gamma^{n}\\
	\nonumber &\le C\Gamma N\\
	\nonumber&\le Cn,
	\end{align} 
	which implies that 
	\begin{equation}\label{eq:bddgn+}
		\sup_{E\in I}\|g^+_n(\cdot, E)\|_{\infty}\le Cn.
		\end{equation}
Then, following the strategy of \cite[Section 3]{ADZ}, Lemma~\ref{l:additiveLDT} will be a consequence of the following lemma:

\begin{lemma}\label{l:additiveLDT+}
	For every $\varepsilon>0$, there is a $n_0=n_0(\varepsilon)$ such that 
	\[
	\mu^+\Big\{\omega^+:\Big|L(E)-\frac1r\sum^{r-1}_{s=0}g^+_n(T^{ns}_+\omega^+,E)\Big|>\varepsilon\Big\}\le e^{-c\frac{\varepsilon^2}{n^2}r}
	\]
	for all $E\in I$, $r\in\Z_+$, and all $n\ge n_0$.
\end{lemma}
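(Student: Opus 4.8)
The plan is to establish Lemma~\ref{l:additiveLDT+} by recognizing that $g^+_n(\cdot,E)$ is, up to the coboundary $h_n(T\omega,E)-h_n(\omega,E)$ built in \eqref{eq:conj_to_g+}, a function that depends only on the future and hence descends to a genuine H\"older function on $\Omega^+$. The key observation is that the Birkhoff average $\frac1r\sum_{s=0}^{r-1}g^+_n(T^{ns}_+\omega^+,E)$ is, after grouping the cocycle $A^E$ into blocks of length $n$, precisely a Birkhoff-type average of a single function along the orbit of the block dynamics $T^n_+$, and that its $\mu^+$-mean equals $L(E)$ by the Birkhoff ergodic theorem (using that $\mu^+$ is $T_+$-ergodic, hence $T^n_+$ is still ergodic or at worst breaks into finitely many ergodic pieces cycled by $T_+$, a scenario we have already reduced away via the topological mixing reduction in Section~\ref{sss:local_product}). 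So the content of the lemma is an exponential large deviation estimate for this specific average, uniform in $E\in I$.

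First I would set up a martingale difference sequence along the lines of the proof of Theorem~\ref{t:ldt_additive}: for a point $\omega^+\in\Omega^+$ write $\omega^+=\omega_0\omega_1\omega_2\cdots$, let $\mathcal{B}_k$ be the $\sigma$-algebra on $\Omega^+$ generated by the cylinders $[0;\omega_0,\ldots,\omega_{k}]^+$, and consider the partial sums $Y^+_k(\omega^+)=\frac{1}{\mu^+([0;\omega_0,\ldots,\omega_k]^+)}\int_{[0;\omega_0,\ldots,\omega_k]^+}S^+_k\,d\mu^+$, where $S^+_k$ is the appropriate $k$-term sum of $g^+_n$-evaluations. Using the bounded distortion property of $\mu$, which descends to $\mu^+$, the conditional expectation $\E(Y^+_{k+nN'}-Y^+_k\mid\mathcal{B}_k)$ can be shown to converge, uniformly in the cylinder and in $E\in I$ (by compactness of $I$ and by \eqref{eq:holdergn+}), to $N'$ times $L(E)$ for a suitable block length $N'$; this plays the role of \eqref{e.almostmartingale} from the earlier proof. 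Then define $X_m=Y^+_{mN'}-\sum_{k=1}^{m}\E(Y^+_{kN'}-Y^+_{(k-1)N'}\mid\mathcal{B}_{(k-1)N'})$, verify that $\{X_m\}$ is a martingale, and verify the uniform bound $\sup_m\|X_{m+1}-X_m\|_\infty\le a$ using the H\"older estimate \eqref{eq:holdergn+} for $g^+_n$ together with exponential contraction along stable directions — crucially, this produces $a\le C(1+N')$ with $C$ independent of $n$ only after one tracks the $\Gamma^{n-1}$ factor carefully, which is why the final exponent carries a $1/n^2$. Finally apply Lemma~\ref{l.azuma} (in the form \eqref{e.azumaconc}) with $\varepsilon$ replaced by roughly $N'\varepsilon$ and translate back from the sparse sequence $\{mN'\}$ to all $r$ by running the same argument along $\{mN'+l\}$, $l=0,\ldots,N'-1$, exactly as at the end of the proof of Theorem~\ref{t:ldt_additive}.

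The main obstacle I anticipate is getting the dependence on the block length $n$ exactly right in the exponent — the lemma claims decay $e^{-c\varepsilon^2 r/n^2}$, and this quadratic loss in $n$ must come out of the bound on $\|X_{m+1}-X_m\|_\infty$. The subtlety is that $g^+_n$ itself is only bounded by $Cn$ in sup norm by \eqref{eq:bddgn+}, so naively a single martingale increment (which involves differences of $S^+$ over blocks) is of size $O(n)$, giving $a=O(n)$ and hence $e^{-c\varepsilon^2 r/n^2}$ via \eqref{e.azumaconc}; one has to be careful that the telescoping/coboundary structure really does keep the increment at $O(n)$ rather than degrading further, and that the convergence rate in the conditional-expectation step (the analogue of \eqref{e.almostmartingale3}) is genuinely uniform in $n$ in the sense needed — i.e.\ that the block length $N'$ needed to reach accuracy $\varepsilon/4$ can be absorbed without reintroducing $n$-dependence beyond what is claimed. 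A secondary point requiring care is the uniformity in $E\in I$: since we only have PLE and ULD on $I$ (not on all of $\R$), and since $I$ may itself be a finite union of intervals, one leans on the H\"older continuity \eqref{eq:holder} of $L(E)$ and on \eqref{eq:holdergn+} to pass from a fixed base point $E_0$ to a neighborhood and then cover $I$ by compactness, exactly mirroring how Theorem~\ref{t:ldt_loc-u-mfld} was deduced from Theorem~\ref{t:ldt_additive}.
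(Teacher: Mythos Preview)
Your approach is not the paper's, and as written it has a minor error and a genuine gap. The paper does \emph{not} rerun the $Y_i/X_m$ construction from Theorem~\ref{t:ldt_additive}. Instead it proceeds in two short steps. First, ULD on $I$ (together with the coboundary relation \eqref{eq:conj_to_g+}) gives an $n_1(\varepsilon)$ with $\left|\int g^+_n\,d\mu^+-L(E)\right|<\varepsilon/10$ for all $n\ge n_1$ and $E\in I$. Second, it applies Lemma~\ref{l.additiveldt+2}, a self-contained LDT valid for \emph{any} $\alpha$-H\"older $F$ on $\Omega^+$ with $\|F\|_\infty\le 1$ and H\"older constant $K$: taking $F=g^+_n/(Cn)$ (so $K$ is of order $\Gamma^n$ by \eqref{eq:holdergn+}) yields a deviation bound of order $\exp\bigl(-c(\varepsilon/n)^2 n^2 r/\log^2(Kn/\varepsilon)\bigr)$, and since $\log K$ is of order $n$ this collapses to $e^{-c\varepsilon^2 r/n^2}$. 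The martingale inside Lemma~\ref{l.additiveldt+2} is built from the Doob-type telescoping $\Delta_iF=\E_i[F]-\E_{i-1}[F]$ along the filtration $\{\CB_i\}$ (this is the Bourgain--Schlag device), \emph{not} from conditional expectations of Birkhoff partial sums; no analogue of the Ces\`aro step Lemma~\ref{l.cesaro2} is ever invoked, and uniformity in $E\in I$ is automatic because all constants depend only on $\|F\|_\infty$, $K$, and $\varepsilon$.

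The minor error: you assert $\int g^+_n\,d\mu^+=L(E)$ ``by the Birkhoff ergodic theorem'', but since the coboundary integrates to zero one has $\int g^+_n\,d\mu^+=\int g_n\,d\mu=\frac1n\int\log\|A^E_n\|\,d\mu$, which equals $L(E)$ only in the limit $n\to\infty$. Closing this for finite $n$ is exactly where the threshold $n_0(\varepsilon)$ in the statement originates, and your outline never produces it. The genuine gap: your ``analogue of \eqref{e.almostmartingale}'' has no mechanism behind it. In Theorem~\ref{t:ldt_additive} that estimate came from Lemma~\ref{l.cesaro2}, whose proof rested on uniqueness of the $u$-state for the projective dynamics on $\Omega\times\R\bbP^1$. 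Here there is no projective cocycle in play --- $g^+_n$ is just a scalar H\"older function on $\Omega^+$ --- so that machinery is unavailable, and ``compactness of $I$ and \eqref{eq:holdergn+}'' do not by themselves produce any convergence of conditional averages to $\int g^+_n\,d\mu^+$. One can extract the needed decorrelation from bounded distortion of $\mu^+$, but the clean way to package it is precisely the $\Delta_iF$ decomposition of Lemma~\ref{l.additiveldt+2}, which replaces your unjustified convergence step by the elementary H\"older approximation $\|F-\E_N[F]\|_\infty<Ke^{-\alpha nN}$.
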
 

The proof of Lemma~\ref{l:additiveLDT+} relies on Lemma~\ref{l.additiveldt+2} below, whose proof again uses Lemma~\ref{l.azuma}.

\begin{lemma}\label{l.additiveldt+2}
Let $K>1$ and assume that $F$ is $\alpha$-H\"older continuous on $\Omega^+$ with $\|F\|_\infty<1$ and $|F(\omega)-F(\omega')|<Kd(\omega,\omega')^\alpha$. Then for all $\varepsilon>0$ and all $r, n\ge 1$, we have
	\[
	\mu^+\left\{\omega^+:\left|\frac1r\sum^{r-1}_{s=0}F(T^{ns}_+\omega^+)-\int F d\mu^+\right|<\varepsilon \right\}<\exp\left(-\frac{c\varepsilon^2n^2}{\log^2 (K\varepsilon^{-1})}r\right).
	\]
\end{lemma}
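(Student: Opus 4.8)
The inequality as displayed cannot be correct with ``$<\varepsilon$'': by the ergodic theorem the $\mu^+$-measure of the set on which the gapped Birkhoff average $\frac1r\sum_{s}F(T^{ns}_+\omega^+)$ is \emph{close} to $\int F\,d\mu^+$ tends to $1$. The statement that is needed, and that is used in the proof of Lemma~\ref{l:additiveLDT+}, is the large-deviation bound obtained by replacing ``$<\varepsilon$'' by ``$>\varepsilon$''; I describe a proof of that. Since $\mu^+$ is $T_+$-invariant, each $F\circ T_+^{ns}$ has mean $\int F\,d\mu^+$, so what is at stake is exponential concentration of a gapped Birkhoff average about its mean. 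The plan is: truncate $F$ to a function of finitely many coordinates, form a martingale difference sequence adapted to a filtration by \emph{disjoint} coordinate blocks, and apply the Azuma-type estimate of Lemma~\ref{l.azuma}.

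Set $m=\lceil\alpha^{-1}\log(4K\varepsilon^{-1})\rceil$ and let $F_m=\E_{\mu^+}(F\mid\CC_m)$ be the conditional expectation of $F$ on the $\sigma$-algebra $\CC_m$ generated by the coordinates $\omega^+_0,\dots,\omega^+_{m-1}$. Then $F_m$ depends only on those $m$ coordinates, $\|F_m\|_\infty\le\|F\|_\infty<1$, and, since any two points agreeing on the first $m$ coordinates lie at distance $\le e^{-m}$, the $\alpha$-H\"older hypothesis gives $\|F-F_m\|_\infty\le Ke^{-\alpha m}<\varepsilon/4$; this is the step responsible for the factor $\log^2(K\varepsilon^{-1})$ in the exponent. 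It therefore suffices to bound the $\mu^+$-measure of the set where $\frac1r\sum_{s=0}^{r-1}F_m\circ T_+^{ns}$ deviates from $\int F_m\,d\mu^+$ by more than $\varepsilon/2$. We may further assume $n\ge2m$: for $n<2m$ the asserted exponent is $\lesssim\varepsilon^2r$, which is already supplied by the classical large-deviation estimate for $T_+^{n}$-Birkhoff averages of the H\"older observable $F$ with respect to the Gibbs measure $\mu^+$ (a standard consequence of the bounded distortion property).

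For $n\ge2m$ the slots $I_s=\{ns,\dots,ns+m-1\}$, $s=0,\dots,r-1$, are pairwise disjoint and separated by gaps of length $\ge m$. Let $\CF_s$ be the $\sigma$-algebra generated by the coordinates in $I_0\cup\cdots\cup I_s$ (with $\CF_{-1}$ trivial) and put $d_s=F_m\circ T_+^{ns}-\E_{\mu^+}(F_m\circ T_+^{ns}\mid\CF_{s-1})$. Then $\{d_s\}$ is a martingale difference sequence with $\|d_s\|_\infty\le2\|F_m\|_\infty<2$, and
\[
\frac1r\sum_{s=0}^{r-1}F_m\circ T_+^{ns}-\int F_m\,d\mu^+=\frac1r\sum_{s=0}^{r-1}d_s+\frac1r\sum_{s=0}^{r-1}\Big(\E_{\mu^+}(F_m\circ T_+^{ns}\mid\CF_{s-1})-\int F_m\,d\mu^+\Big).
\]
Lemma~\ref{l.azuma}, applied to $\{d_s\}$ with $N=r$ and $\sum_s\|d_s\|_\infty^2<4r$, bounds $\mu^+\{|\frac1r\sum_s d_s|>\varepsilon/4\}$ by $e^{-c\varepsilon^2 r}$. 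For the drift term one uses that $\mu^+$ inherits the bounded distortion property and, on the topologically mixing subshift of finite type $\Omega^+$, is therefore a Gibbs measure for a H\"older potential, hence has exponential decay of correlations: choosing $m$ (enlarged if necessary by a constant factor depending only on $\alpha$ and the decay rate) so large that $C\theta^{m}<\varepsilon/4$, and using that $I_s$ is separated from $I_0\cup\cdots\cup I_{s-1}$ by a gap $\ge m$, we obtain $|\E_{\mu^+}(F_m\circ T_+^{ns}\mid\CF_{s-1})-\int F_m\,d\mu^+|<\varepsilon/4$ uniformly in $s$. Combining the three errors ($\varepsilon/4$ from truncation, $\varepsilon/4$ from the drift, $\varepsilon/4$ from the Azuma deviation) by inclusion of events gives $\mu^+\{|\frac1r\sum_s F\circ T_+^{ns}-\int F\,d\mu^+|>\varepsilon\}<e^{-c\varepsilon^2 r}$ for all $r\ge1$ and $n\ge2m$. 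Since $m\asymp\alpha^{-1}\log(K\varepsilon^{-1})$, the exponent $c\varepsilon^2 r$ dominates $c'\varepsilon^2n^2r/\log^2(K\varepsilon^{-1})$ whenever $n\lesssim\log(K\varepsilon^{-1})$ — which covers the regime in which the lemma is invoked, where $K\asymp\Gamma^{n}$ keeps $n/\log(K\varepsilon^{-1})$ bounded — so together with the classical bound for $n<2m$ this yields the statement.

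The main obstacle is the drift term: the bounded distortion property by itself only makes $\E_{\mu^+}(F_m\circ T_+^{ns}\mid\CF_{s-1})$ comparable to $\int F_m\,d\mu^+$ up to the fixed multiplicative constant $C$, which is not enough; one genuinely needs the quantitative exponential decay of correlations that follows from the Gibbs property. A secondary, purely bookkeeping, point is checking that the choices of $m$ and the constants from Lemma~\ref{l.azuma} combine to give the advertised dependence of the exponent on $\varepsilon$, $K$, and $n$, and that the small-$n$ and small-$r$ corners are absorbed into the classical Gibbs large-deviation estimate.
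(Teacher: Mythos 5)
You are right that the displayed ``$<\varepsilon$'' is a typo for ``$>\varepsilon$'', and your general plan (truncate $F$ to finitely many coordinates, build a martingale difference sequence, apply Lemma~\ref{l.azuma}) is in the same spirit as the paper's. But the way you build the martingale creates a drift term that the hypotheses cannot control, and you concede as much: to get $|\E_{\mu^+}(F_m\circ T_+^{ns}\mid\CF_{s-1})-\int F_m\,d\mu^+|<\varepsilon/4$ you invoke exponential decay of correlations ($\psi$-mixing) coming from a Gibbs property of $\mu^+$. The lemma assumes only the bounded distortion property, which makes conditional measures comparable to $\mu^+$ up to a fixed constant $C$ and nothing more; it carries no mixing rate, let alone an exponential one, so this step is simply not available in the stated setting, and declaring that one ``genuinely needs'' the Gibbs property amounts to strengthening the hypotheses rather than proving the lemma. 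The paper's argument is arranged precisely so that no drift term ever appears: instead of centering $F_m\circ T_+^{ns}$ on widely separated blocks, it expands the truncation $F_N=\int F\,d\mu^++\sum_i\Delta_iF$ over the nested filtration $\CB_i$ generated by the first $ni$ coordinates and re-sums the array $\Delta_kF\circ T_+^{n(i-k)}$ along diagonals, so that the resulting increments $d_i$ are adapted to the same filtration with $\E(d_i\mid\CB_{i-1})=0$, and Lemma~\ref{l.azuma} applies directly with no decay-of-correlations input; the factor $n^2/\log^2(K\varepsilon^{-1})$ then comes from $N\asymp\log(K\varepsilon^{-1})/(\alpha n)$ together with $\sum_i\|d_i\|_\infty^2\lesssim N^2/r$.

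The second gap is your treatment of the case $n<2m$, which you dismiss by citing a ``classical large-deviation estimate for $T_+^{n}$-Birkhoff averages \ldots a standard consequence of the bounded distortion property.'' No such classical estimate exists under bounded distortion alone, and certainly not with the uniformity in the H\"older constant $K$ that is required here; moreover, in the intended application $K\asymp\Gamma^{n}$, so $m\asymp\alpha^{-1}\log(K\varepsilon^{-1})\gtrsim n$, i.e.\ the regime you defer to the ``classical'' result is exactly the regime in which the lemma is actually used. Since an exponential bound for gapped Birkhoff averages of observables with huge H\"older constant under a bounded-distortion measure is precisely what this lemma provides, the appeal is circular there. (A lesser point: your final bound $e^{-c\varepsilon^2 r}$ is weaker than the stated exponent $c\varepsilon^2 n^2 r/\log^2(K\varepsilon^{-1})$ outside the regime $n\lesssim\log(K\varepsilon^{-1})$; that alone could be tolerated for the application, but the unjustified mixing input and the circular fallback are genuine gaps.)
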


\begin{proof}
For $\vec l=(l_1,\ldots, l_n)$, where the finite word $l_1\cdots l_n$ is admissible, we instead just say that $\vec l$ is admissible. Let $|\vec l|$ denotes the number of entries in $\vec l$. For an admissible $\vec l$, we set $\Omega^+_{\vec l}:=[0;\vec l]^+.$

	Fix any $n\ge 1$ and let $\CB_i$ be the $\sigma$-algebra generated by the cylinder sets $\Omega^+_{\vec l}$, where $|\vec l|=ni$. Denote by $\mathbb E_i$ the conditional expectation operator with respect to $\CB_i$. Hence,
	\[\mathbb E_i[F](\omega^+)=\sum_{\vec l}\frac{1}{\mu^+(\Omega^+_{\vec l})}\int_{\Omega^+_{\vec l}}F\, d\mu^+\cdot \chi_{\Omega^+_{\vec l}}(\omega^+),
	\] 
	where the sum is taken over all admissible $\vec l$.
	Note that $\mathbb E_0[F]=\int F\, d\mu^+$ and we let $\Delta_i F=\mathbb E_i[F]-\mathbb E_{i-1}[F]$ for all $i\ge 1$. Clearly, we only need to consider $\varepsilon>0$ small. Thus we may choose an integer $N$ such that
	\[
	\log\frac{10K}{\varepsilon}<\alpha nN<2\log\frac{10K}{\varepsilon}.
	\]
	In particular, we have
	\begin{equation}\label{eq:chooseN}
		Ke^{-\alpha nN}<\frac{\varepsilon}{10}\ \mbox{and}\ \frac1{N}>\frac{cn}{\log\frac K\varepsilon}.
	\end{equation}
	By H\"older continuity of $F$, we have
	\begin{equation}\label{eq:FtoFN}
		\|F-\mathbb E_N[F]\|_\infty<Ke^{-nN\alpha}<\frac\varepsilon{10}.
	\end{equation}
	Denote $\mathbb E_N[F]$ by $F_N$ for simplicity. Let $i\wedge j=\max\{i,j\}$. Note that 
	\[
	F_N=\int F\, d\mu^++\sum^N_{i=0}\Delta_i(F).
	\] 
	Thus, for $r\ge 1$
	\begin{align*}
		\frac{1}{r}\sum^{r-1}_{s=0}F_N(T^{ns}_+\omega^+)&=\int F \, d\mu^+\\
		&+\frac1r\Delta_1F\ &(=d_1)\\
		&+\frac1r[\Delta_1F(T^n_+\omega^+)+\Delta_2F(\omega^+)]\ &(=d_2)\\
		&+\cdots+\frac1r\sum^{i\wedge N-1}_{k=(i-r)\wedge 1}\Delta_kF(T^{n(i-k)}_+\omega^+)+\cdots\ &(=d_i)\\
		&+\frac1r[\Delta_{N-1}F(T^{n(r-1)}_+\omega^+)+\Delta_NF(T^{n(r-2)}_+\omega^+)]\  &(=d_{N+r-2})\\
		&+\frac1r\Delta_{N}F(T^{n(r-1)}_+\omega^+)\ &(=d_{N+r-1}).
	\end{align*}
	It is clear that $\{d_i\}_{i\ge 1}$ is a martingale sequence adapted to $\CB_i$ since $\mathbb E_{i-1}(d_{i})=0$. Moreover, since $\|F\|_\infty\le 1$, it is straightforward to see that for $r\le N$:
	\[
	\|d_i\|_\infty\le \begin{cases} 2\frac{i}{r}, &1\le i< r\\
		2, & r\le i\le N\\
		2\frac{N+r-i}{r}, &N< i\le N+r-1\end{cases},
	\]
	which implies
	\[
	\sum^{N+r}_{i=1}\|d_i\|^2_\infty\le 8\sum^{r-1}_{i=1}\frac{i^2}{r^2}+4(N-r+1)\le 4N\le 4\frac{N^2}{r}.
	\]
	If $r\ge N$, we have 
	\[
	\|d_i\|_\infty\le \begin{cases} 2\frac{i}{r}, &1\le i< N\\
		2\frac{N}{r}, & N\le i\le r\\
		2\frac{N+r-i}{r}, & r<i\le N+r-1
	\end{cases},
	\]
	which implies
	\[
	\sum^{N+r}_{i=1}\|d_i\|^2_\infty\le 8\sum^{N-1}_{i=1}\frac{i^2}{r^2}+4\frac{N^2}{r^2}(r-N+1)\le 4\frac{N^2}{r}.
	\]
	In all cases, we have 
	\begin{equation}\label{eq:bdd_difference}
	\left(\sum^{N+r}_{i=1}\|d_i\|^2_\infty\right)^{\frac12}\le \frac{2N}{\sqrt r}.
	\end{equation}
	By \eqref{e.azumass3}, we have for all $\delta>0$ and $m\in\Z_+$ that
	\[
		\bbP\left[\bigg|\sum^{m}_{1}d_i\bigg|>\delta m^{\frac12}\left(\sum^{m}_{1}\|d_i\|^2_\infty\right)^\frac12\right]<e^{-c\delta^2m}.
	\]
	By setting $m=r+N$ and $\delta m^{\frac12}\left(\sum^{m}_{1}\|d_i\|^2_\infty\right)^\frac12=\frac\e{10}$, and using \eqref{eq:bdd_difference}, we obtain
	\begin{align*}
	\mu^{+} & \left\{\omega^+: \left|\frac{1}{r}\sum^{r-1}_{s=0}F_N(T^{ns}_+\omega^+)-\int F \, d\mu^+\right|>\frac\varepsilon{10}\right\}\\
	& = \bbP\left[\bigg|\sum^{m}_{1}d_i\bigg|>\delta m^{\frac12}\left(\sum^{m}_{1}\|d_i\|^2_\infty\right)^\frac12\right]\\
	& < \exp\bigg(-c\frac{\e^2}{\sum^m_1\|d_i\|^2_\infty}\bigg)\\
	& \le \exp\bigg(-c\frac{\varepsilon^2}{N^2}r\bigg),
	\end{align*}
	and hence, in view of \eqref{eq:chooseN} and \eqref{eq:FtoFN}, we have 
	\[
	\mu^{+}\left\{\omega^+: \left|\frac{1}{r}\sum^{r-1}_{s=0}F(T^{ns}_+\omega^+)-\int F \, d\mu^+\right|>\varepsilon\right\}<\exp\left(-\frac{c\varepsilon^2n^2}{\log^2(\varepsilon^{-1} K)}r\right),
	\]
which is the desired estimate.	
\end{proof}

Now we are ready to prove Lemma~\ref{l:additiveLDT+}.
\begin{proof}[Proof of Lemma~\ref{l:additiveLDT+}]
	By ULD on $I$ and the relation between $g_n(\cdot,E)$ and $g^+_n(\cdot, E)$, for every $\varepsilon>0$, there are $c,C>0$ such that
	\[
	\mu^+\left\{\omega^+: \left|g^+_n(\omega^+, E)-L(E)\right|>\epsilon\right\} < Ce^{-cn},
	\] 
	uniformly for all $E\in I$, which implies the existence of some $n_1(\e)$ so that for all $n\ge n_1(\varepsilon)$,
	\[
	\left|\int g^+_n(\omega^+,E)\, d\mu^+-L(E)\right|<\frac\varepsilon{10}.
	\]
	By \eqref{eq:holdergn+} and \eqref{eq:bddgn+}, we may apply Lemma~\ref{l.additiveldt+2} to $F=\frac{g^+_n}{Cn}$ and $K=C\Gamma^{n}$ to obtain some $n_2(\varepsilon)$ so that for all $n\ge n_2(\varepsilon)$,
	\[
	\mu^+\left\{\omega^+: \left|\frac1r\sum^{r-1}_{s=0} g^+_n(T^{ns}_+\omega^+, E)-\int g^+_n\, d\mu^+\right|>\frac\varepsilon{10}\right\}<e^{-\frac{c\varepsilon^2}{n^2}r}.
	\]
	Combining the two estimates above, we obtain
	\[
	\mu^+\left\{\omega^+: \left|\frac1r\sum^{r-1}_{s=0} g^+_n(T^{ns}_+\omega^+, E)-L(E)\right|>\varepsilon\right\}<e^{-c\frac{\varepsilon^2}{n^2}r},
	\]
	uniformly for all $E\in I$, $r\ge 1$, and $n\ge n_0(\varepsilon)=\max\{n_1(\varepsilon), n_2(\varepsilon)\}$.
\end{proof}

\begin{proof}[Proof of Lemma~\ref{l:additiveLDT}]
	This proof is similar to the proof of \cite[Theorem 3.1]{ADZ}. We copy a part of it here since we need the explicit lower bound on $r$. 
	
Let
	$$  	
	\CB^+_r(\varepsilon) := \bigg\{ \omega^+ \in \Omega^+ : \bigg| \frac{1}{r} \sum^{r-1}_{s=0} g^+_n(T^{ns}_+\omega^+,E) - L(E) \bigg| > \varepsilon \bigg\}.
	$$
	In view of \eqref{eq:conj_to_g+} and \eqref{eq:bddhn}, for $r\ge C\frac{n}\e$, we have
	\[
	\bigg| \frac{1}{r} \sum^{r-1}_{s=0} g_n(T^{ns}\omega,E) - L(E) \bigg| > \varepsilon \Rightarrow \bigg| \frac{1}{r} \sum^{r-1}_{s=0} g^+_n(T^{ns}_+\omega^+,E) - L(E) \bigg| > \frac\varepsilon2,
	\]
	which implies for all $n\ge n_0(\e)$, $r\ge C\frac{n}\e$, and all $E\in I$ that
	\begin{align*}
		\mu \bigg\{ \omega \in \Omega : \bigg| \frac{1}{r} \sum^{r-1}_{s=0} g_n(T^{ns}\omega,E) - L(E) \bigg| > \varepsilon \bigg\} & \le \mu[ (\pi^+)^{-1} \CB^+_n(\varepsilon/2)] \\
	& = \mu^+(\CB^+_n(\varepsilon/2)) \\
	& < e^{-c\frac{\e^2}{n^2} r}.
	\end{align*}
	Finally, the ability to add the shift by $s_0$ in the statement of Lemma~\ref{l:additiveLDT} is a consequence of the $T$-invariance of $\mu$.
\end{proof}

\begin{prop} \label{prop:meanLDTshifted}
	For any $\varepsilon  \in (0,1)$, there exists a subset $\Omega'= \Omega'(\varepsilon ) \subset \Omega$ of full $\mu$-measure such that the following statement holds true. For every $\varepsilon  \in (0,1)$ and every $\omega \in \Omega'(\varepsilon )$, there exists $\widetilde n_0 = \widetilde n_0(\omega,\varepsilon )$ such that
	\begin{equation}\label{eq:partsum1}
		\left|L(E) - \frac{1}{n^4}\sum_{s=0}^{n^4-1} g_{n}\left(T^{s_0 + ns}(\omega),E\right) \right|
		<\varepsilon 
	\end{equation}
	for all $n,s_0 \in \Z$ with   $n \geq \max\left( \widetilde n_0, (\log(|s_0|+1))^{2/3} \right)$ and all $E \in I$.
\end{prop}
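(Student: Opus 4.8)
The plan is to upgrade the probabilistic estimate of Lemma~\ref{l:additiveLDT} to an almost-sure statement via a Borel--Cantelli argument, treating $s_0$ and $n$ jointly by a union bound over the relevant range of $s_0$. First I would apply Lemma~\ref{l:additiveLDT} with $r = n^4$, $\varepsilon$ replaced by $\varepsilon/2$ (say), and a fixed shift $s_0$: provided $n$ is large enough that $n^4 \ge C n/\varepsilon$ (which holds for all $n$ beyond some $n_0(\varepsilon)$), we get
\[
\mu\Big\{\omega:\Big|L(E)-\tfrac{1}{n^4}\sum_{s=0}^{n^4-1} g_n(T^{ns+s_0}\omega,E)\Big|>\tfrac\varepsilon2\Big\}\le e^{-c\varepsilon^2 n^2}
\]
for every $E\in I$. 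The first real issue is the dependence on $E$: the set in \eqref{eq:partsum1} must be controlled uniformly over the whole interval $I$, not for a single energy. I would handle this exactly as is standard in such arguments, using the Lipschitz-type bound \eqref{eq:holdergn} on $g_n(\cdot,E)$ in $E$: partition $I$ into $O(\Gamma^{n})$ subintervals of length $\varepsilon/(10\Gamma^{n})$, so that varying $E$ within one subinterval changes $\frac{1}{n^4}\sum_s g_n(\cdots,E)$ by less than $\varepsilon/10$; then it suffices to control the bad event at the (finitely many) centers of these subintervals. The union bound over these $O(\Gamma^n)$ energies costs a factor $\Gamma^n = e^{n\log\Gamma}$, which is still swallowed by $e^{-c\varepsilon^2 n^2}$ once $n$ is large, yielding
\[
\mu\Big\{\omega:\sup_{E\in I}\Big|L(E)-\tfrac{1}{n^4}\sum_{s=0}^{n^4-1} g_n(T^{ns+s_0}\omega,E)\Big|>\varepsilon\Big\}\le e^{-c'\varepsilon^2 n^2}
\]
for all $n\ge n_0(\varepsilon)$ and all $s_0\in\Z$, with $c'$ depending only on $f$ and $I$.

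Next I would take the union over the allowed shifts. The constraint $n\ge(\log(|s_0|+1))^{2/3}$ means that for a given $n$ the permissible values of $s_0$ satisfy $|s_0|\le e^{n^{3/2}}-1$, so there are at most $2e^{n^{3/2}}+1$ of them. Summing the above bound over this range gives a bad-set measure of at most $(2e^{n^{3/2}}+1)\,e^{-c'\varepsilon^2 n^2}$, and since $n^{3/2}=o(n^2)$ this is $\le e^{-c''\varepsilon^2 n^2}$ for $n$ large, which is summable in $n$. Call the union of these sets (over $n\ge n_0(\varepsilon)$, with $r=n^4$ and all admissible $s_0$) $\CB_n(\varepsilon)$; then $\sum_n \mu(\CB_n(\varepsilon))<\infty$, so by Borel--Cantelli the set $\Omega'(\varepsilon):=\{\omega: \omega\in\CB_n(\varepsilon)\text{ for only finitely many }n\}$ has full measure. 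For $\omega\in\Omega'(\varepsilon)$ we may take $\widetilde n_0(\omega,\varepsilon)$ to be $\max(n_0(\varepsilon), M(\omega)+1)$ where $M(\omega)$ is the largest $n$ with $\omega\in\CB_n(\varepsilon)$; then \eqref{eq:partsum1} holds for all $n\ge\max(\widetilde n_0,(\log(|s_0|+1))^{2/3})$ and all $E\in I$, which is exactly the claim. (One can intersect over a countable sequence $\varepsilon_k\downarrow 0$ if a single full-measure set good for all $\varepsilon$ is wanted, but the statement as written only asks for one per $\varepsilon$.)

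The main obstacle I anticipate is making sure all the quantitative rates line up: the exponential cost $e^{n\log\Gamma}$ from discretizing the energy interval and the cost $e^{n^{3/2}}$ from the shift range must both be dominated by the Gaussian-type decay $e^{-c\varepsilon^2 n^2}$ coming from the martingale/Azuma estimate underlying Lemma~\ref{l:additiveLDT}. This is why the exponent in Lemma~\ref{l.additiveldt+2} and Lemma~\ref{l:additiveLDT} is quadratic in the number of blocks $r$ (here $r=n^4$, giving decay $e^{-c\varepsilon^2 n^{-2}\cdot n^4}=e^{-c\varepsilon^2 n^2}$ in terms of the block size $n$) rather than merely linear — the superlinear gain is precisely what allows both union bounds to be absorbed. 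Everything else is bookkeeping: the reduction $r=n^4$, the choice of $\widetilde n_0(\omega,\varepsilon)$ as the last bad scale, and the elementary observation that $(\log(|s_0|+1))^{2/3}\le n$ is equivalent to $|s_0|\le e^{n^{3/2}}-1$.
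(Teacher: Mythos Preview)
Your proposal is correct and follows essentially the same route as the paper's proof: discretize $I$ into a grid of $O(\Gamma^n/\varepsilon)$ points, apply Lemma~\ref{l:additiveLDT} with $r=n^4$ at each grid point, take union bounds over the grid and over $|s_0|\le e^{n^{3/2}}$, then invoke Borel--Cantelli. The only small addition you should make explicit is that when passing from a grid point to a nearby $E\in I$ you must also control the variation of $L(E)$ itself via the H\"older bound \eqref{eq:holder}, not just the variation of the averaged $g_n$ via \eqref{eq:holdergn}; the paper handles this by requiring $C(\varepsilon/(6\Gamma^n))^\beta<\varepsilon/3$, which is automatic for large $n$.
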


\begin{proof}	
	For a given large enough $n$ (we will determine largeness later), we first consider sets $\CB_{n,s_0} = \CB_{n,s_0}(\varepsilon)$ where \eqref{eq:partsum1} fails to hold:
	\begin{align*}
		\CB_{n,s_0}
		:=
		\left\{\omega : \sup_{E \in I}
		\left|L(E) - \frac{1}{n^4}\sum_{s=0}^{n^4-1} g_n(T^{s_0 + ns}\omega,E) \right|
		\geq
		\e \right\}.
	\end{align*}
	
	Let $\kappa=|I|$ and given $0 < \delta \leq \kappa/2$, define the na\"ive grid
	\[
	I_0
	:=
	\left[ I \cap \left( \delta \, \Z \right) \right].
	\]
	It is straightforward to check that $I_0$ is $\delta$-dense in $I$ in the sense that
	\begin{equation} \label{eq:prop82:deltaDense_centered}
		I
		\subseteq
		\bigcup_{t \in I_0} [t-\delta,t+\delta].
	\end{equation}
	Moreover, we may estimate the cardinality of $I_0$ via
	\[
	\# I_0
	\leq
	\frac{\kappa}{\delta}+1\le \frac{2\kappa}{\delta}.
	\]
	Now, fixing $0 < \e < 1$ and taking $\delta = \e (3 \Gamma^{n})^{-1}$, we may produce a finite set $I_0 \subset I$, which is $\e (3 \Gamma^{n})^{-1}$-dense in $I$ in the sense of \eqref{eq:prop82:deltaDense_centered}, with cardinality bounded above by
	\begin{equation} \label{eq:cardinality_centered}
		\#I_0
		\leq
		\frac{6\kappa \Gamma^{n}}{\e}.
	\end{equation}
	If necessary, enlarge $n$ to ensure that
	\begin{equation}\label{eq:holderApplication}
		C \left(\frac{\e}{6\Gamma^n} \right)^\beta
		<
		\frac{\e}{3},
	\end{equation}
	where $C$ and $\beta$ are from \eqref{eq:holder}.
	Then \eqref{eq:holder} and \eqref{eq:holdergn} yield
	\begin{align*}
		\CB_{n,s_0}
		\subset
		\bigcup_{E \in I_0}
		\left\{\omega: \left|L(E) - \frac{1}{n^4}\sum_{s=0}^{n^4-1} g_{n} \left(T^{s_0 + ns}\omega,E \right) \right|
		\geq
		\frac{\e}{3} \right\}.
	\end{align*}
	Consequently, by taking $n$ large enough that $n \geq \max\{n_0(\e/3), C\e^{-1/3}\}$ and \eqref{eq:holderApplication} holds, and using $T$-invariance of $\mu$, we obtain
	\begin{equation} \label{eq:supersmall}
		\mu(\CB_{n,s_0})
		\leq
		\frac{6 \kappa \Gamma^{n}}{\e} e^{-c\e^2 n^{2}}
	\end{equation}
	by Lemma~\ref{l:additiveLDT}. Now, with
	\[
	\CB_{n}
	:=
	\bigcup\limits_{|s_0| \leq e^{n^{3/2}}} \CB_{n, s_0},
	\]
	it is clear that $\mu\left(\CB_{n}\right) \leq e^{-c\e n^2}$ for $n$ large, so $\Omega' := \Omega \setminus \limsup \CB_{n}$ satisfies $\mu(\Omega') = 1$ by the Borel--Cantelli Lemma. Naturally, for each $\omega \in \Omega'$, we can find $\widetilde n_0 = \widetilde{n}_0(\omega, \e)$ large enough that $\omega \notin \CB_n$ whenever $n \ge \widetilde n_0$. In other words,
	\[
	\omega \notin \CB_{n, s_0} \mbox{ whenever } n\geq \widetilde{n}_0(\omega, \e) \mbox{ and } |s_0| \leq e^{n^{3/2}}.
	\]
	Changing the order of $n$ and $s_0$, the statement above implies
	\[
	\omega \notin \CB_{n,s_0}
	\mbox{ whenever } s_0 \in\Z \mbox{ and } n \geq \max\left( \widetilde n_0, (\log(|s_0|+1))^{2/3} \right).
	\]
	By the definition of $B_{n,s_0}$, we obtain the statement of the proposition.
\end{proof}

We are now in a position to estimate the finite-volume Green functions. Before stating the estimate, we fix some notation. Let $\Lambda = [a,b] \cap \Z$ be a finite subinterval of $\Z$, and denote by $P_\Lambda:\ell^2(\Z) \to \ell^2(\Lambda)$ the canonical projection. We denote the restriction of $H_\omega$ to $\Lambda$ by
$$
H_{\omega,\Lambda}
:=
P_\Lambda H_\omega P_\Lambda^*.
$$
For any $E \notin \sigma(H_{\omega,\Lambda})$, define
$$
G_{\omega,\Lambda}^E
:=
(H_{\omega,\Lambda}-E)^{-1}
$$
to be the resolvent operator associated to $H_{\omega,\Lambda}$.  Like $H_{\omega,\Lambda}$, $G_{\omega,\Lambda}^E$ has a representation as a finite matrix; denote its matrix elements by $G_{\omega,\Lambda}^E(m,n)$, that is,
$$
G_{\omega,\Lambda}^E(m,n)
:=
\left\langle \delta_m , G_{\omega,\Lambda}^E \delta_n \right\rangle,
\quad
n,m \in \Lambda.
$$
Additionally, for $N \in \Z_+$, let us define $H_{\omega,N} := H_{\omega,[0,N-1]}$ to be the restriction of $H_\omega$ to the box $\Lambda_N := [0,N-1]\cap \Z.$  We will likewise use the same notation for the associated resolvent $G_{\omega,N}^E$.
\medskip

Using Cramer's rule, we know that
\begin{equation} \label{eq:greenstodet}
	G_{\omega,N}^E(j,k)
	=
	\frac{\det[H_{\omega,j} - E] \det[H_{T^{k+1}\omega, N-k-1} - E]}{\det[H_{\omega,N} - E]}
\end{equation}
for any $0 \leq j \leq k \leq N-1$ and $E \notin \sigma(H_{\omega,N})$, where one interprets $\det[H_{\omega,0} - E] = 1$.

Another relation that will be important in what follows is
\begin{align}
	\label{eq:transfertodet}
	A_N^E(\omega)
	=
	\begin{bmatrix}
		\det(E - H_{\omega,N}) & -\det(E - H_{T\omega,N-1})\\
		\det(E - H_{\omega,N-1}) & -\det(E - H_{T\omega,N-2})
	\end{bmatrix},
	\quad
	N \ge 2.
\end{align}
This is a standard fact, which one may prove inductively.

In particular, since the norm of a matrix majorizes the absolute value of any of its entries, we obtain
\begin{equation} \label{eq:greenBoundsFromTMBounds}
	\left| G_{\omega,N}^E(j,k) \right|
	\leq
	\frac{\|A_{j}^E(\omega)\| \|A_{N-k}^E(T^{k}\omega)\|}{|\det[H_{\omega,N} - E]|}
\end{equation}
for all $0 \leq j \leq k \leq N-1$ by combining \eqref{eq:greenstodet} and \eqref{eq:transfertodet}. Thus, it is straightforward to transform estimates on transfer matrices into estimates on Green functions of truncations of $H_\omega$; to complete our goal of estimating Green functions, we will use Proposition~\ref{prop:meanLDTshifted} to estimate transfer matrix norms, and then apply \eqref{eq:greenBoundsFromTMBounds}.

\begin{coro}\label{cor:LEandGreenEst_general}
	Given $\varepsilon \in (0,1)$ and $\omega \in\Omega'(\varepsilon)$, there exists $\widetilde n_1 = \widetilde n_1(\omega,\varepsilon)$ so that the following statements hold true.  For all $E \in I$, we have
	\begin{equation}\label{eq:FiniteLyapUpperEstimate_general}
		\frac{1}{n}\log
		\norm{A_n^E(T^{s_0}\omega)}
		\leq
		L(E) + 2\varepsilon
	\end{equation}
	whenever $n, s_0 \in \Z$ satisfy $n \geq \max\left( \widetilde n_1, \log^2(|s_0| + 1) \right)$.
	
	Moreover, for all  $n, s_0 \in \Z$ with $n\ge \e^{-1}\max\left( \widetilde n_1, 2\log^2(|s_0|+1)\right)$, we have
	\begin{equation}\label{eq:greenEst_K_general}
		\left| G_{T^{s_0} \omega,n}^E(j,k) \right|
		\leq
		\frac{\exp[(n - |j - k|)L(E) + C_0 \e n]}{|\det[H_{T^{s_0} \omega,n} - E]|}
	\end{equation}
	for all $E \in I \setminus \sigma(H_{T^{s_0} \omega,n})$ and all $j,k\in [0,n)$, where $C_0$ is a constant that only depends on $f$ and $\alpha$.
\end{coro}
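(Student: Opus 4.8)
The plan is to extract both conclusions from Proposition~\ref{prop:meanLDTshifted} by a telescoping/subadditivity argument, together with the deterministic transfer-matrix-to-Green-function inequality \eqref{eq:greenBoundsFromTMBounds}. Fix $\e\in(0,1)$ and $\omega\in\Omega'(\e)$, and let $\widetilde n_0=\widetilde n_0(\omega,\e)$ be as in Proposition~\ref{prop:meanLDTshifted} (applied with the role of $\e$ there played by a suitably rescaled $\e$, say $\e$ itself, up to renaming constants). First I would establish \eqref{eq:FiniteLyapUpperEstimate_general}. Given $m,s_0\in\Z$ with $m$ large, write $m=pn+q$ with $0\le q<n$, where $n\ge\widetilde n_0$ is fixed and $p$ is large; by submultiplicativity $\|A^E_m(T^{s_0}\omega)\|\le \|A^E_q(T^{s_0+pn}\omega)\|\cdot\prod_{s=0}^{p-1}\|A^E_n(T^{s_0+ns}\omega)\|$, so $\frac1m\log\|A^E_m(T^{s_0}\omega)\|\le \frac{pn}{m}\cdot\frac1p\sum_{s=0}^{p-1}g_n(T^{s_0+ns}\omega,E)+\frac{\log\Gamma}{m}\cdot q$. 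Choosing $p=n^3$ (so $pn=n^4$ matches the sum length in \eqref{eq:partsum1}) and invoking Proposition~\ref{prop:meanLDTshifted}, the average is within $\e$ of $L(E)$ provided $n\ge\max(\widetilde n_0,(\log(|s_0|+1))^{2/3})$; the error term $\frac{q\log\Gamma}{m}\le\frac{\log\Gamma}{n^3}$ is negligible for $n$ large. A standard interpolation across the residue $q$ (using $\|A^E(\cdot)\|\le\Gamma$) then upgrades this from the arithmetic progression $m\in n^4\Z$ to all $n$ in the stated range, at the cost of replacing $\e$ by $2\e$ and renaming $\widetilde n_0$ as $\widetilde n_1$; the condition $n\ge\log^2(|s_0|+1)$ is the clean sufficient condition ensuring $(\log(|s_0|+1))^{2/3}\le n$ for $n$ large.

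Next I would prove \eqref{eq:greenEst_K_general}. The key point is \eqref{eq:greenBoundsFromTMBounds}:
\[
\left| G_{T^{s_0}\omega,n}^E(j,k) \right|
\le
\frac{\|A^E_j(T^{s_0}\omega)\|\,\|A^E_{n-k}(T^{s_0+k}\omega)\|}{|\det[H_{T^{s_0}\omega,n}-E]|}
\]
for $0\le j\le k\le n-1$. I would bound the numerator using \eqref{eq:FiniteLyapUpperEstimate_general} applied to the two shorter blocks of lengths $j$ and $n-k$, with base points $T^{s_0}\omega$ and $T^{s_0+k}\omega$ respectively; note $|s_0+k|\le|s_0|+n$, and the hypothesis $n\ge\e^{-1}\max(\widetilde n_1,2\log^2(|s_0|+1))$ is tailored precisely so that both $j$ and $n-k$ are either $\ge\widetilde n_1$ (when they are a definite fraction of $n$, so \eqref{eq:FiniteLyapUpperEstimate_general} applies and gives $\le$ length$\cdot(L(E)+2\e)$) or else so short ($<\widetilde n_1$) that the trivial bound $\|A^E_\ell\|\le\Gamma^\ell\le\Gamma^{\widetilde n_1}$ contributes only an $O(\e n)$ error once divided through. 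Either way the numerator is at most $\exp[jL(E)+(n-k)L(E)+C_0\e n]=\exp[(n-|j-k|)L(E)+C_0\e n]$ after using $L(E)\ge0$ and $j+(n-k)=n-(k-j)=n-|j-k|$, with $C_0$ absorbing $2\cdot 2 + (\log\Gamma)\cdot(\text{constant})$ and depending only on $f$ and $\alpha$ (through $\Gamma$ and $\widetilde n_1$'s dependence, which one must check is harmless — it only affects the additive constant, not the coefficient of $n$, once $n$ is large relative to $\widetilde n_1$, which the hypothesis $n\ge\e^{-1}\widetilde n_1$ guarantees). The case $k\le j$ follows by the symmetry $G^E_{T^{s_0}\omega,n}(j,k)=G^E_{T^{s_0}\omega,n}(k,j)$ of a symmetric resolvent.

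The main obstacle is bookkeeping the parameter dependencies so that the final constant $C_0$ genuinely depends only on $f$ and $\alpha$ and not on $\e$, $\omega$, $s_0$, or $n$: one must ensure that the contribution of the "short" blocks (length $<\widetilde n_1$) is $\le C\widetilde n_1\log\Gamma$, which is a fixed constant for given $\omega,\e$, and that after dividing by $n\ge\e^{-1}\widetilde n_1$ this becomes $\le C(\log\Gamma)\e n$ — i.e. it is the hypothesis $n\ge\e^{-1}\widetilde n_1$ that converts a fixed additive error into the required $O(\e n)$ error. A secondary subtlety is that \eqref{eq:FiniteLyapUpperEstimate_general} has a hypothesis $n\ge\log^2(|s_0|+1)$ on the \emph{block length}, so when applying it to the block of length $n-k$ based at $T^{s_0+k}\omega$ one needs $n-k\ge\log^2(|s_0+k|+1)$; since $|s_0+k|\le|s_0|+n$ and $n-k$ can be as small as $1$, this can only be guaranteed when $n-k\ge\widetilde n_1$ fails — hence precisely in the regime where we instead use the trivial bound, so there is no actual conflict, but the split into the two regimes must be set up carefully. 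These are routine once the quantifiers are ordered correctly, following the corresponding step in \cite[Section 5]{bucaj2}.
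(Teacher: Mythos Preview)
Your outline for \eqref{eq:FiniteLyapUpperEstimate_general} is the paper's argument with the roles of $n$ and $m$ interchanged: chop the long block into pieces of size roughly its fourth root and feed the resulting average into Proposition~\ref{prop:meanLDTshifted}.

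For \eqref{eq:greenEst_K_general}, however, your two-case split has a genuine gap. Applying \eqref{eq:FiniteLyapUpperEstimate_general} to the block of length $n-k$ based at $T^{s_0+k}\omega$ requires $n-k\ge\max\bigl(\widetilde n_1,\log^2(|s_0+k|+1)\bigr)$, not merely $n-k\ge\widetilde n_1$. Since $|s_0|$ is constrained only by $\log^2(|s_0|+1)\le\tfrac{\e n}{2}$, the threshold $\log^2(|s_0+k|+1)$ can be of order $\e n$, far larger than the fixed constant $\widetilde n_1$. There is therefore an intermediate regime $\widetilde n_1\le n-k<\log^2(|s_0+k|+1)$ in which neither \eqref{eq:FiniteLyapUpperEstimate_general} nor your ``short block'' bound $\Gamma^{\widetilde n_1}$ applies. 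Your closing paragraph asserts that this regime is empty, which is false. (Your approach can be rescued by redefining ``short'' to mean length $<\max\bigl(\widetilde n_1,\log^2(|s_0+k|+1)\bigr)$ and observing that this threshold is itself $\le C\e n$, so the trivial bound $\Gamma^{\text{length}}$ still costs only $O(\e n)$ in the exponent; but this is not what you wrote.)

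The paper avoids any case split via a padding trick. With $h:=\lceil\e n\rceil$ one writes
\[
\|A_j(T^{s_0}\omega)\|\le\|A_{j+h}(T^{s_0-h}\omega)\|\cdot\bigl\|[A_h(T^{s_0-h}\omega)]^{-1}\bigr\|,
\]
and uses $\|B^{-1}\|=\|B\|$ for $B\in\SL(2,\R)$. Both factors on the right now have length $\ge h$, and the hypothesis $n\ge\e^{-1}\max\bigl(\widetilde n_1,2\log^2(|s_0|+1)\bigr)$ is designed precisely so that $h\ge\widetilde n_1$ and $h\ge\log^2(|s_0-h|+1)$. Hence \eqref{eq:FiniteLyapUpperEstimate_general} applies to both factors unconditionally, giving $\|A_j(T^{s_0}\omega)\|\le e^{(j+2h)(L(E)+2\e)}\le e^{jL(E)+C_0\e n}$ with $C_0$ depending only on $\sup_I L\le\log\Gamma$, hence only on $f$. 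The block $A_{n-k}(T^{s_0+k}\omega)$ is handled identically. This also disposes of your worry about $\widetilde n_1$ (and hence $\omega,\e$) creeping into $C_0$: in the paper's argument $\widetilde n_1$ never enters the exponent at all.
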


\begin{proof}
	Fix $\e \in (0,1)$, $\omega \in \Omega'(\e)$, and $E \in I$. As usual, our estimates are independent of the energy, so we suppress $E$ from the notation. Choose $\widetilde n_1 \in \Z_+$ large enough that
	\begin{equation}
		\label{eq:AveragingCorollary_general_N_Largeness}
		\widetilde n_1
		\geq
		\max\left( \widetilde n_0(\omega,\e)^3, 4\e^{-1}, 15^4 \right), \quad
		\text{and} \quad
		\frac{20 \log\Gamma}{\widetilde n_1^{1/4} - 5} < \e.
	\end{equation}
	Given $n,s_0 \in \Z$ with $n \geq \max\left(\widetilde n_1 ,\log^2(|s_0|+ 1) \right)$, we want to apply Proposition~\ref{prop:meanLDTshifted} with $m = \lceil n^{1/4}\rceil$. Notice that $0 \le m^4 -n \leq 5m^3$. Thus, by submultiplicativity of the matrix norm and unimodularity of the transfer matrices, we have
	\begin{equation}\label{eq:PartialLyapCor}
		\|A_{n}(T^{s_0}\omega)\|
		\leq
		\Gamma^{5m^3}\prod_{s = 0}^{m^3-1} \|A_m(T^{s_0 + sm} \omega)\|.
	\end{equation}
	By our choice of $\widetilde n_1$, we have $m \geq \widetilde n_0$ and $m \geq (\log(|s_0|+1))^{2/3}$. Thus, combining \eqref{eq:PartialLyapCor} with Proposition~\ref{prop:meanLDTshifted}  and \eqref{eq:AveragingCorollary_general_N_Largeness}, a direct computation shows that
	\begin{equation}\label{eq:lognorm_estimate_general}
		\begin{split}
			\frac{1}{n}\log\|A_{n}(T^{s_0} \omega)\|
			& \leq
			\frac{5m^3 \log \Gamma}{n} + \frac{m^4}{n}(L+\e) \\
			& \leq
			L + \frac{10\log\Gamma}{m-5} + \frac{m}{m-5} \e \\
			& \leq
			L + 2\e,
		\end{split}
	\end{equation}
	which yields \eqref{eq:FiniteLyapUpperEstimate_general}.
	
	Now suppose $n\ge \e^{-1}\max \left( \widetilde n_1, 2 \log^2(|s_0|+1)) \right)$ and put $h := \lceil \varepsilon n\rceil$. For $j \ge 0$, we have
	\begin{equation*}
		\norm{ A_j(T^{s_0} \omega) }
		\leq
		\norm{ A_{j + h}( T^{s_0 - h} \omega ) }
		\norm{ [A_h( T^{s_0 - h} \omega )]^{-1} }.
	\end{equation*}
	Clearly $j \ge 0$ and $h \ge \widetilde n_1$. Moreover, by our choice of $\widetilde n_1$ and the relation between $n$ and $s_0$, a direct computation shows $h \geq \log^2(|s_0 - h| + 1)$. 
	Thus we can apply \eqref{eq:lognorm_estimate_general} to estimate the norms on the right hand side and obtain
	\begin{equation}\label{eq:TransferMatrixFromGreenF1}
		\norm{ A_j(T^{s_0} \omega) }
		\leq
		e^{(j + 2h)L(E) + 2\e(j + 2h)}
		\leq
		e^{jL(E) + C_0\varepsilon n},
	\end{equation}
	where $C_0$ is a constant that depends only on $\widetilde \mu$.
	
	Naturally, one can also apply the analysis above to estimate the transfer matrix $A_{n-k}(T^{s_0 + k} \omega)$ with $ j\le k \le n-1$ as well. Using \eqref{eq:AveragingCorollary_general_N_Largeness} and the relationships among $h$, $s_0$ and $k$, a direct computation shows that $\log^2(|s_0 - k + h| + 1)\le h$. Thus, \eqref{eq:lognorm_estimate_general} yields
	\begin{align}
		\nonumber
		\| A_{n-k}(T^{s_0 + k} \omega) \|
		& \leq
		\left\| A_{n-k+h}(T^{s_0 + k - h} \omega) \right\|
		\left\| \left[ A_{h}(T^{s_0 + k - h} \omega)\right]^{-1}\right \| \\
		\label{eq:TransferMatrixFromGreenF2}
		& \leq
		\exp\left[(n-k)L(E) + C_0 \e n\right].
	\end{align}
	Combining equations \eqref{eq:TransferMatrixFromGreenF1} and \eqref{eq:TransferMatrixFromGreenF2} with the observation \eqref{eq:greenBoundsFromTMBounds}, one sees that for a suitable choice of $C_0$,
	\begin{align*}
		|G_{T^{s_0} \omega,n}^E(j,k)|
		& \leq
		\frac{\|A_{j}(T^{s_0}\omega)\| \|A_{n-k}(T^{s_0 + k} \omega)\|}{|\det[H_{T^{s_0} \omega,n} - E)]|} \\
		& \leq
		\frac{\exp[(n - |j - k|)L(E) + C_0 \e n]}{|\det[H_{T^{s_0} \omega,n} - E)]|}
	\end{align*}
	for all $E \in I \setminus \sigma(H_{T^{s_0} \omega,n})$ and all $0 \le j \le k < n$. The case $j \geq k$ follows because $H$ is self-adjoint and $E$ is real.
\end{proof}

\subsection{Elimination of Double Resonances}

For $N \in \Z_+$, we define
\[
\overline{N}
:=
\left\lfloor N^{\log N}\right\rfloor
=
\left\lfloor e^{(\log N)^2} \right\rfloor,
\]
which is a super-polynomially and subexponentially growing function of $N$.

We now introduce the set of double resonances. Given $\e > 0$ and $N \in \Z_+$, let $\mathcal D_N =\mathcal D_N(\e)$ denote the set of all those $\omega \in \Omega$ such that
\begin{equation} \label{eq:doubleRes:GreenLB_centered}
	\|G_{T^s\omega,[-N_1, N_2]}^E\|
	\ge
	e^{K^2}
\end{equation}
and
\begin{equation} \label{eq:doubleRes:FNupperBound_centered}
	\frac1m\log\|A_m(T^{s+r}\omega,E)\|
	\leq
	L(E)-\e
\end{equation}
for some choice of $s\in \Z$, $K \ge \max(N, \log^2(|s|+1))$, $0\le N_1,N_2 \leq K^9$, $E \in I $, $K^{10} \le r \le \overline K$, and $m \in \{K,2K\}$.

\begin{prop} \label{prop:doubleRes}
	For all $0 < \e < 1$, there exist constants $C > 0$ and $\widetilde\eta > 0$ such that
	\[
	\mu(\mathcal D_N(\e))
	\le
	Ce^{-\widetilde\eta N}
	\]
	for all $N \in \Z_+$.
\end{prop}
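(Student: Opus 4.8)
The plan is to fix all the discrete parameters entering the definition of $\mathcal D_N(\e)$ — namely $s$, $K$, $N_1$, $N_2$, $r$, and $m$ — and to bound the $\mu$-measure of the corresponding ``bad'' set, with the energy $E$ still free, by a quantity of the form $CK^9 e^{-cK}$; a union bound over the admissible parameter values for a given $K$, followed by a geometric sum over $K\ge N$, then yields the claim. Indeed, for fixed $K$ the number of admissible tuples grows only sub-exponentially in $K$: the condition $\log^2(|s|+1)\le K$ allows at most $2e^{\sqrt K}+1$ values of $s$, there are at most $(K^9+1)^2$ pairs $(N_1,N_2)$, at most $\overline K\le e^{(\log K)^2}$ values of $r$, and two values of $m$, so the total multiplicity is $e^{\sqrt K}e^{(\log K)^2}K^{\mathcal O(1)}=e^{o(K)}$.

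So fix such a tuple. The key structural observation is that the two defining inequalities \eqref{eq:doubleRes:GreenLB_centered} and \eqref{eq:doubleRes:FNupperBound_centered} are governed by coordinates of $\omega$ in \emph{disjoint} windows of $\Z$. The restricted resolvent $G^E_{T^s\omega,[-N_1,N_2]}$ depends on the potential values $f(T^{k}\omega)$ with $k\in[s-K^9,s+K^9]$, and since $f\in C^\alpha$ each such value is determined up to an error $\le Ce^{-\alpha K^9}$ by $\omega$ restricted to $W_1:=[\,s-2K^9,\,s+2K^9\,]$; as finite self-adjoint matrices, and their eigenvalues, depend Lipschitz-continuously on their entries, the spectrum $\sigma(H_{T^s\omega,[-N_1,N_2]})$ is thus determined up to a $Ce^{-\alpha K^9}$ error by $\omega|_{W_1}$. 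Likewise, by \eqref{eq:holdergn}, $A_m^E(T^{s+r}\omega)$ is determined up to the same kind of error (note $\Gamma^{2K}e^{-\alpha K^9}\to0$) by $\omega$ restricted to $W_2:=[\,s+r-K^9,\,s+r+2K+K^9\,]$. Since $r\ge K^{10}>3K^9$, the index-windows $W_1$ and $W_2$ are disjoint.

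The heart of the argument is to use \eqref{eq:doubleRes:GreenLB_centered} to confine $E$: $\|G^E_{T^s\omega,[-N_1,N_2]}\|\ge e^{K^2}$ forces $\mathrm{dist}\big(E,\sigma(H_{T^s\omega,[-N_1,N_2]})\big)\le e^{-K^2}$, so $E$ lies within $e^{-K^2}$ of one of the at most $N_1+N_2+1\le 2K^9+1$ eigenvalues of the restriction. Now condition on a cylinder $C_1$ over $W_1$: the eigenvalues are pinned to fixed numbers $E_1,\dots,E_{N_1+N_2+1}$, each within $e^{-K^2}+Ce^{-\alpha K^9}$ of $I$, and on $C_1$ the bad set is contained in $\bigcup_i B_i$, where $B_i:=\{\omega:\frac1m\log\|A_m^{\bar E_i}(T^{s+r}\omega)\|\le L(\bar E_i)-\e/2\}$ and $\bar E_i\in I$ is a closest point of $I$ to $E_i$; here the Hölder continuity \eqref{eq:holder} of $L$ together with \eqref{eq:holdergn} absorb, at the price of passing from $\e$ to $\e/2$, the errors from replacing $E$ by $\bar E_i$ and from the $W_2$-localization of $A_m^E$. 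Each $B_i$ is, up to the same negligible localization error, measurable with respect to $\omega|_{W_2}$, and ULD on $I$ gives $\mu(B_i)\le Ce^{-cm}\le Ce^{-cK}$. Since $W_1$ and $W_2$ are disjoint index-windows, the bounded distortion property \eqref{eq:bdd}, applied to $C_1$ and the cylinders making up $B_i$, yields $\mu(B_i\cap C_1)\le C\,\mu(B_i)\,\mu(C_1)$; summing over $i$ and over all cylinders $C_1$ over $W_1$ gives $\mu(\text{bad set for this tuple})\le(2K^9+1)\,C\,e^{-cK}$. Multiplying by the multiplicity count from the first paragraph, the union over all tuples with a given $K$ has measure $\le Ce^{-cK/2}$ for $K$ large; summing over $K\ge N$ and enlarging $C$ to cover the finitely many small $N$ gives $\mu(\mathcal D_N(\e))\le Ce^{-\widetilde\eta N}$ with $\widetilde\eta=c/2$.

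The main obstacle is precisely the decoupling used in the third step: the ``resonance'' event \eqref{eq:doubleRes:GreenLB_centered} and the ``anti-resonance'' event \eqref{eq:doubleRes:FNupperBound_centered} are tied to the \emph{same} energy $E$, yet depend on coordinates of $\omega$ in windows separated by $\ge K^{10}-3K^9\gg1$, and it is this separation that allows the cheap, energy-uniform deviation bound ULD to be upgraded into control of the correlated pair, through bounded distortion. For locally constant $f$ the windows $W_1,W_2$ carry the finite-volume data exactly; for general $f\in C^\alpha$ one must first check — using only the Hölder modulus of $f$ and elementary perturbation bounds for finite matrices and their eigenvalues — that both finite-volume quantities are determined up to errors super-polynomially small in $K$ by $\omega$ on windows of size $\mathrm{poly}(K)$, so that these errors are dwarfed by the scales $e^{-K^2}$ and $e^{-cK}$ appearing in the estimate.
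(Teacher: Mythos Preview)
Your proof is correct and follows the same overall strategy as the paper: freeze the discrete parameters, use the resolvent bound \eqref{eq:doubleRes:GreenLB_centered} to pin $E$ to an eigenvalue of the finite-volume operator, condition on a cylinder so that those eigenvalues become deterministic, decouple the two events via bounded distortion, and invoke ULD. The difference is in the decoupling step. You localize both events to disjoint coordinate windows $W_1,W_2$ of width $\sim K^9$ using the H\"older modulus of $f$, approximate $B_i$ by a union of $W_2$-cylinders, and apply the two-sided bounded distortion property \eqref{eq:bdd} directly to the pair $(C_1,C_2)$ of cylinders. The paper instead shifts by $s$, partitions into cylinders $\Omega_{\vec l}=[-K^2;\vec l]$, replaces the large-deviation set $S_K(E',\e/3)$ by its $s$-local saturation $\tilde S(E')$, projects to the one-sided shift $(\Omega^+,T_+,\mu^+)$, and invokes Lemma~\ref{l:bdd+} (a Radon--Nikodym bound for the push-forwards $\mu^+_{\vec l}$) to obtain $\mu(\Omega_{\vec l}\cap T^{-r}S_K)\le C\mu(\Omega_{\vec l})\mu(S_K)$. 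Your route is more elementary and keeps everything on the two-sided shift; the paper's route reuses the one-sided machinery from \cite{ADZ} and sidesteps the explicit cylinder-approximation of $B_i$ by passing to saturations. Both yield the same $CK^9e^{-cK}$ bound per tuple and the same final estimate after the union bound and the sum over $K\ge N$.
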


To prove Proposition~\ref{prop:doubleRes}, we first need the following lemma from \cite{ADZ}. Recall that for an admissible $\vec l=(l_1,\ldots, l_n)$, we have defined  at the begining of proof of Lemma~\ref{l.additiveldt+2} that $\Omega^+_{\vec l}:=[0;\vec l]^+$. We further define
\beq\label{eq:normalized_forward_measure}
 \mu^+_{\vec l}=\frac1{\mu^+\big(\Omega^+_{\vec l}\big)}(T^{|\vec l|}_+)_*\mu^+\big|_{\Omega^+_{\vec l}}.
\eeq
In other words, $\mu^+_{\vec l}$ is the normalized push-forward of $\mu^+$ under the injective map $T^{|\vec l|}_+:\Omega^+_{\vec l}\to \Omega^+$. By definition, we have
\[
\int_{\Omega^+}f\, d\mu^+_{\vec l}=\frac1{\mu^+(\Omega^+_{\vec{l}})}\int_{\Omega^+_{\vec l}}f\circ T_+^{|\vec l|}\, d\mu^+.
\]
If we view $T^{-|\vec l|}_+$ as a map from $T_+^{|\vec l|}(\Omega^+_{\vec l})$ to $\Omega^+_{\vec l}$, we obtain
\beq\label{eq:inverseIntegral}
\mu^+(\Omega^+_{\vec{l}})\int_{T_+^{|\vec l|}(\Omega^+_{\vec{l}})}f\circ T_+^{-|\vec l|}\, d\mu^+_{\vec l}=\int_{\Omega^+_{\vec{l}}}f\, d\mu^+.
\eeq
Note that $\mu^+_{\vec l}$ is concentrated on $T_+^{|\vec l|}(\Omega^+_{\vec l})$.  Then we have \cite[Lemma 3.3]{ADZ}; stated as Lemma~\ref{l:bdd+} below. We point out that although in the statement of Lemma 3.3, \cite{ADZ} assumes the topological mixing property, it is not needed since the proof there did not use it. 

\begin{lemma}\label{l:bdd+}
	Consider a one-sided subshift of finite type $(\Omega^+,T_+,\mu^+)$, where $\mu^+$ has the bounded distortion property. There exists a $C \ge 1$ so that, uniformly for all admissible $\vec l$, we have
	\beq\label{eq:bdd+1}
	\frac{d\mu^+_{\vec l}}{d\mu^+}(\omega^+)\le C \mbox{ for $\mu$-a.e. } \omega^+,
	\eeq
	where $\frac{d\mu^+_{\vec l}}{d\mu^+}$ is the Radon-Nikodym derivative of $\mu^+_{\vec l}$ with respect to $\mu^+$. In particular, we have for all nonnegative measurable functions $f$ and all admissible $\vec l$,
	\beq\label{eq:bdd+2}
	\int f \, d\mu^+_{\vec l}\le  C\int f \, d\mu^+.
	\eeq
\end{lemma}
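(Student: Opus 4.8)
The plan is to deduce \eqref{eq:bdd+1} from the single uniform estimate
\[
\mu^+_{\vec l}(Z)\le C\,\mu^+(Z)
\]
for every cylinder $Z=[0;i_0,\ldots,i_s]^+\subset\Omega^+$, with $C$ depending on neither the admissible word $\vec l$ nor the cylinder $Z$, and then to upgrade this to the pointwise bound on the Radon--Nikodym derivative by a routine measure-theoretic argument. Granting \eqref{eq:bdd+1}, the ``in particular'' assertion \eqref{eq:bdd+2} is immediate from $\int f\,d\mu^+_{\vec l}=\int f\,\frac{d\mu^+_{\vec l}}{d\mu^+}\,d\mu^+$ for nonnegative measurable $f$. (This is \cite[Lemma 3.3]{ADZ}; we reproduce the argument, noting that topological mixing is not used.)

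For the cylinder estimate, write $n=|\vec l|$, so $\Omega^+_{\vec l}=[0;l_1,\ldots,l_n]^+$ occupies the coordinates $0,\ldots,n-1$. Since $T_+^{n}$ is injective on $\Omega^+_{\vec l}$ with $(T_+^{n})^{-1}(Z)=[n;i_0,\ldots,i_s]^+$, the definition \eqref{eq:normalized_forward_measure} gives
\[
\mu^+_{\vec l}(Z)=\frac{\mu^+\big(\Omega^+_{\vec l}\cap[n;i_0,\ldots,i_s]^+\big)}{\mu^+(\Omega^+_{\vec l})}.
\]
If this intersection is empty, both sides of the desired inequality vanish. Otherwise, pull back via $(\pi^+)^{-1}$, which intertwines $\mu$ with $\mu^+$ and carries $\Omega^+_{\vec l}$ and $[n;i_0,\ldots,i_s]^+$ to cylinders in $\Omega$ occupying the coordinate blocks $\{0,\ldots,n-1\}$ and $\{n,\ldots,n+s\}$; the gap condition needed for \eqref{eq:bdd} holds since the second block begins at $n>n-1$, so \eqref{eq:bdd} yields
\[
\mu^+\big(\Omega^+_{\vec l}\cap[n;i_0,\ldots,i_s]^+\big)\le C\,\mu^+(\Omega^+_{\vec l})\cdot\mu^+\big([n;i_0,\ldots,i_s]^+\big).
\]
Finally $\mu^+$ is $T_+$-invariant (from $T$-invariance of $\mu$ via $\pi^+\circ T=T_+\circ\pi^+$), hence $\mu^+([n;i_0,\ldots,i_s]^+)=\mu^+([0;i_0,\ldots,i_s]^+)=\mu^+(Z)$, and dividing by $\mu^+(\Omega^+_{\vec l})$ gives $\mu^+_{\vec l}(Z)\le C\,\mu^+(Z)$.

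For the upgrade, note that the length-$(s+1)$ cylinders based at $0$ partition $\Omega^+$, these partitions refine as $s$ grows, and together they generate the Borel $\sigma$-algebra, all atoms having positive $\mu^+$-measure since $\mu$ has full support. Thus $X_s:=\sum_{|\vec i|=s+1}\frac{\mu^+_{\vec l}(\Omega^+_{\vec i})}{\mu^+(\Omega^+_{\vec i})}\,\chi_{\Omega^+_{\vec i}}$ is a nonnegative martingale with respect to $\mu^+$ adapted to this filtration, with $\int X_s\,d\mu^+=1$, and the cylinder estimate forces $X_s\le C$ uniformly in $s$. By the martingale convergence theorem, $X_s\to X_\infty$ $\mu^+$-a.e.\ and in $L^1(\mu^+)$ with $0\le X_\infty\le C$, and since $\int_{\Omega^+_{\vec i}}X_\infty\,d\mu^+=\mu^+_{\vec l}(\Omega^+_{\vec i})$ for every cylinder, uniqueness of measure extension identifies $X_\infty$ with $\frac{d\mu^+_{\vec l}}{d\mu^+}$, giving \eqref{eq:bdd+1}. (Alternatively, observe that the signed measure $C\mu^+-\mu^+_{\vec l}$ is nonnegative on the generating algebra of finite unions of cylinders, hence nonnegative by a Hahn-decomposition/approximation argument.) The one place to be careful is precisely this measure-theoretic bookkeeping, together with aligning the coordinate blocks so the hypothesis $l>n+k$ of \eqref{eq:bdd} is satisfied; everything else is substitution of definitions, and the argument is short.
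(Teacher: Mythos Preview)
Your proof is correct; the paper does not supply its own argument here but cites \cite[Lemma~3.3]{ADZ}, and your reconstruction is the natural one (apply bounded distortion to the adjacent cylinder blocks, then pass to the Radon--Nikodym derivative via the cylinder-martingale). One small slip of wording: when the intersection $\Omega^+_{\vec l}\cap[n;i_0,\ldots,i_s]^+$ is empty, only the left side $\mu^+_{\vec l}(Z)$ need vanish, not $\mu^+(Z)$, but the inequality $0\le C\mu^+(Z)$ is of course trivial in that case.
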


\begin{proof}[Proof of Proposition~\ref{prop:doubleRes}]
	Define auxiliary ``bad sets'' for fixed $s$ and $K$:
	\begin{equation*}
		\mathcal D_{K,s} = \set{\omega: \eqref{eq:doubleRes:GreenLB_centered}, \eqref{eq:doubleRes:FNupperBound_centered}\text{ are satisfied for some choice of } E,N_1,N_2,r,m \text{ as above}}.
	\end{equation*}
	Fix $\e \in (0,1)$ and begin by noticing that
	\begin{equation}\label{eq:split-into-N-k-sets_centered}
		\mathcal D_{K,s}
		\subset
		\bigcup_{K^{10} \le r \le \overline{K}}\;\;
		\bigcup_{0 \leq N_1,N_2\leq K^{9}}
		\widetilde{\mathcal D}_1(N_1,N_2,r,s) \cup \widetilde{\mathcal D}_2(N_1,N_2,r,s),
	\end{equation}
	where $\widetilde{\mathcal D}_j(N_1,N_2,r,s)$ denotes the collection of all $\omega \in \Omega$ for which there exists $E \in I$ such that \eqref{eq:doubleRes:GreenLB_centered} and \eqref{eq:doubleRes:FNupperBound_centered} hold with $m=jK$. 
	
	We will estimate $\mu(\widetilde{\mathcal D}_1)$. The estimates for $\widetilde{\mathcal D}_2$ are completely analogous. To that end, suppose $\omega\in\widetilde{\mathcal D}_1(N_1,N_2,r,s)$, that is, \eqref{eq:doubleRes:GreenLB_centered} and \eqref{eq:doubleRes:FNupperBound_centered} hold for some $E \in I$. By the spectral theorem, there exists $E_0 \in \sigma(H_{T^s\omega,[-N_1,N_2]})$ with
	\begin{equation}
		\label{eq:doubleRes:DtildeUB}
		|E - E_0|
		\leq
		\left\|G^E_{T^s\omega,[-N_1,N_2]} \right\|^{-1}
		\leq
		e^{-K^{2}}.
	\end{equation}
	On the other hand, choosing $K$ large enough that $C\Gamma^{K} e^{-\alpha K^{2}}\leq \frac{\e}{6}$ and $ Ce^{-\alpha\beta K^{2}} \leq \frac{\e}{6}$ (where $C,\beta$ are from \eqref{eq:holder} and \eqref{eq:holdergn} and the $\alpha$ in the second inequality will be used later), we get
	\begin{align}\label{eq:gKtranfer}
		\nonumber g_{K}(T^{s+r}\omega,E_0)
		& \leq
		g_{K}(T^{s+r}\omega, E) + \frac{\e}{6}\\
		& \leq
		L(E) - \e + \frac{\e}{6}\\
		\nonumber & \leq
		L(E_0) - \frac{2\e}{3},
	\end{align}
	where we have used \eqref{eq:holdergn} in the first line, \eqref{eq:doubleRes:FNupperBound_centered} in the second line, and \eqref{eq:holder} in the final line.
	Thus, when $K$ is large enough, we get
	\begin{equation} \label{eq:doubleRes:DtildeUB_centered}
		\widetilde{\mathcal D}_1(N_1,N_2,r,s)
		\subseteq
		\hat{\mathcal D}_1(N_1,N_2,r,s) \text{ for all } N_1, N_2, r, \text{ and } s,
	\end{equation}
	where $\hat{\mathcal D}_1 = \hat{\mathcal D}_1(N_1,N_2,r,s)$ denotes the set of all $\omega \in \Omega$ such that
	\[
	g_{K}(T^{s+r}\omega,E_0)
	\leq
	L(E_0) - \frac{\e}{2}
	\]
	for some $E_0\in\sigma\left(H_{T^s\omega,[-N_1,N_2]} \right)$. To estimate the measure of $
	\hat{\mathcal D}_1$, by $T$-invariance, we may instead consider 
	\[
	T^s(\hat{\mathcal D}_1)=\bigcup_{E_0\in  \sigma\big(H_{\omega, [-N_1, N_2]}\big)}\big\{\omega: g_K(T^r\omega, E_0)\le L(E_0)-\frac\e2 \big\}.
	\]
	Divide $\Omega$ into $\Omega=\bigcup_{\vec l} \Omega_{\vec l}$, where $\vec l$ is admissible with $|\vec l|=2K^2+1$ and $\Omega_{\vec l}=[-K^2: \vec l]$. Fix some $\omega^{(\vec l)}\in \Omega_{\vec l}$. Then for all $\omega\in\Omega_{\vec l}$, we have $d(\omega,\omega^{(\vec l)})\le e^{-K^2}$. Thus for each $E_0\in \sigma\big(H_{\omega, [-N_1, N_2]}\big)$, there is a $E'\in \sigma\big(H_{\omega^{(\vec l)}, [-N_1, N_2]}\big)$ such that 
	\[
	|E'-E_0|\le \|H_{\omega, [-N_1, N_2]}-H_{\omega^{(\vec l)}, [-N_1, N_2]}\|<Ce^{-\alpha K^2}.
	\]
Running the estimate \eqref{eq:gKtranfer} again, we obtain that for $\omega\in\Omega_{\vec l}$
\[
g_{K}(T^{r}\omega,E_0)<L(E_0)-\frac23\e\Rightarrow g_{K}(T^{r}\omega,E')<L(E')-\frac\e3.
\]
In other words, we have 
	\begin{equation}\label{eq:fix_omega}
		T^s(\hat{\mathcal D}_1)\subset \bigcup_{\vec l} \bigcup_{E'\in  \sigma\big(H_{\omega^{(\vec l)}, [-N_1, N_2]}\big)}\left\{\omega\in \Omega_{\vec l }: g_K(T^r\omega, E')\le L(E')-\frac\e3 \right\}.
	\end{equation}
	Let $S_K(E,\e)=\{\omega: g_K(\omega, E)\le L(E)-\e\}$. Then the set on the right-hand side of \eqref{eq:fix_omega} becomes 
	\[
	\Omega_{\vec l}\bigcap T^{-r}\big(S_K(E',\e/3)\big).
	\] 
	To estimate the measure of the set above, we define the following $s$-locally saturated set,
	\[
	\tilde S(E)=\bigcup_{\omega\in T^{-K^2}S_K(E,\e/3)}W^s_\loc(\omega).
	\]
	For any $\omega'\in W^s_\loc(\omega)$, we have
	$d(T^{K^2}\omega',T^{K^2}\omega)\le e^{-K^2}$, which implies
	\[
		|g_K(T^{K^2}\omega,E)-g_K(T^{K^2}\omega',E)|\le C\Gamma^{K}e^{-\alpha K^{2}}<\frac\e6.
	\]
	For any $\omega'\in \tilde S(E)$, $\omega'\in W^s_\loc(\omega)$ for some $\omega\in T^{-K^2}(S_K(E,\e/3))$. Hence, 
	\begin{align*}
		g(T^{K^2}\omega',E)&\le g(T^{K^2}\omega,E)+\frac\e6\\
		&\le L(E)-\frac\e3+\frac\e6\\
		&=L(E)-\frac\e6,
				\end{align*}
	which implies
\[
	T^{-K^2}S_K(E,\e/3)\subset \tilde S(E)\subset T^{-K^2}S_K(E,\e/6).
\]
   Clearly,
	\begin{align*}
	T^{-K^2}\left(\Omega_{\vec l}\bigcap T^{-r}S_K(E',\e/3)\right)&\subset T^{-K^2}\left(\Omega_{\vec l}\bigcap T^{-r+K^2}\tilde S(E')\right)\\
	&=\left(T^{-K^2}\Omega_{\vec l}\right)\bigcap \left(T^{-r}\tilde S_K(E')\right),
	\end{align*}
	which is $s$-locally saturated. Let 
	$$
	\tilde S^{+}(E)=\pi^+(\tilde S(E)) \mbox{ and } \Omega^+_{\vec l}:=\pi^+\big(T^{-K^2}\Omega_{\vec l}\big)=[0;\vec l]^+.
	$$ 
	It is straightforward to see that for any $s$-locally saturated set $S$, we have $\mu(S)=\mu^+(\pi^+S)$ and $\pi^+T^{-n}S=T^{-n}_+\pi^+S$, which yields
    \begin{align*}
	\mu\left(\big(T^{-K^2}\Omega_{\vec l}\big)\bigcap \big( T^{-r}\tilde S_K(E')\big)\right)
	&=\mu^+\pi^+\left(\big(T^{-K^2}\Omega_{\vec l}\big)\bigcap \big( T^{-r}\tilde S_K(E')\big)\right)\\
	&\le \mu^+\left(\pi^+\big(T^{-K^2}\Omega_{\vec l}\big)\bigcap \pi^+\big( T^{-r}\tilde S_K(E')\big)\right)\\
	&= \mu^+\left(\Omega^+_{\vec{l}}\bigcap T^{-r}_+\big(\tilde S^+_K(E')\big)\right).
   \end{align*}
By Lemma~\ref{l:bdd+}, the three estimates above, $T$- and $T^+$-invariance of $\mu$ and $\mu^+$ respectively, and the fact that $r\ge K^{10}>2K^2+1$, we obtain
	\begin{align*}
		\mu\big(\Omega_{\vec l}\bigcap T^{-r}\big(S_K(E',\e/3)\big)\big)&=\mu\left(T^{-K^2}\big(\Omega_{\vec l}\bigcap T^{-r}S_K(E',\e/3)\big)\right)\\
		&\le \mu^+\left(\Omega^+_{\vec{l}}\bigcap T^{-r}_+\big(\tilde S^+(E')\big)\right)\\&=
		\int_{\Omega^+_{\vec l}} \chi_{T^{-r}_+(\tilde S^+(E'))}\, d\mu^+\\
		&=\int_{\Omega^+_{\vec l}} \chi_{\tilde S^+(E')}\circ T^{r}_+ d\mu^+\\
		&=\mu^+(\Omega^+_{\vec l}) \int_{\Omega} \chi_{\tilde S^+(E')}\circ T^{r-2K^2-1}_+\, d\mu^+_{\vec l}\\
		&\le C\mu^+(\Omega^+_{\vec l})\int_{\Omega}\chi_{\tilde S^+(E')}\circ T^{r-2K^2-1}_+\, d\mu^+\\
		&=C\mu^+(\Omega^+_{\vec l})\int_{\Omega}\chi_{\tilde S^+(E')}d\mu^+\\
		&=C\mu^+(\Omega^+_{\vec l})\cdot \mu^+\big(\tilde S^+(E')\big)\\
		&=C\mu(\Omega_{\vec l})\cdot \mu\big(\tilde S(E')\big)\\
		&\le C\mu(\Omega_{\vec l})\mu (T^{-K^2}S_K(E', \e/6))\\
		&=C\mu(\Omega_{\vec l})\mu (S_K(E', \e/6))\\
		&\le C\mu(\Omega_{\vec l}) e^{-cK},
	\end{align*}
	where the last line follows from ULD. Then, we have
	\begin{align*}
		\mu(\hat{\mathcal D}_1)&=\mu(T^s\hat{\mathcal D}_1)\\
		& \le C\sum_{\vec l}\mu(\Omega_{\vec l})K^9e^{-cK}
		\\
		& \leq
		C \base^{9} e^{-c\base} \\
		& \leq
		Ce^{- \eta_1 \base}.
	\end{align*}
	Thus, we obtain $\mu(\widetilde{\mathcal D}_1(N_1,N_2,r,s)) \leq Ce^{-\eta_1 \base}$ by applying \eqref{eq:doubleRes:DtildeUB_centered}. 
	
	Applying similar reasoning to $\widetilde{\mathcal D}_2$, one can estimate $\mu(\widetilde{\mathcal D}_2(N_1,N_2,r,s) ) \leq Ce^{-\eta_2\base}$.
	
	Putting everything together yields
	\begin{align*}
		\mu(\mathcal D_{K,s})
		& \leq
		\sum_{0 \leq N_1,N_2 \leq \base^{9}} \;
		\sum_{K^{10} \le r \le \overline{K}}
		\left(\mu(\widetilde{\mathcal D}_1(N_1,N_2,r)) + \mu(\widetilde{\mathcal D}_2(N_1,N_2,r)) \right) \\
		& \leq
		C \base^{18} \overline{\base} e^{-\eta_3 \base} \\
		& \leq
		Ce^{-2\widetilde\eta \base}
	\end{align*}
	for some suitable choice of $\widetilde\eta$. Changing the order of $K$ and $s$, we have
	\[
	\mathcal{D}_N
	=
	\bigcup_{s\in\Z}\ \bigcup_{K\ge\max\{N,\log^2(|s|+1)\}}\mathcal{D}_{K,s}
	\subseteq
	\bigcup_{K \ge N} \bigcup_{|s| \le e^{\sqrt K}} \mathcal D_{K,s}.
	\]
	Then, the estimates above yield
	\[
	\mu(\mathcal{D}_N)
	\leq
	\sum_{K \ge N} (2e^{\sqrt K}+1) Ce^{-2\widetilde\eta K}
	\leq
	Ce^{-\widetilde\eta N}
	\]
	for large enough $N$. Adjusting the constants to account for small $N$ concludes the proof.
\end{proof}
Once we have Corollary~\ref{cor:LEandGreenEst_general} and Proposition~\ref{prop:doubleRes}, we can then mimic the process after the proof of Proposition 6.1 of \cite[Section 6]{bucaj2} to prove \eqref{eq:edl}, hence Theorem~\ref{t:main}.

\begin{remark}\label{rem:halflineops}
An analogous proof can be given for half-line operators associated with non-invertible maps. The changes are relatively simple. In particular, modulo these modifications, the arguments presented in this paper will establish the localization result for the doubling map model stated in Corollary~\ref{cor:doubling}. 
\end{remark}

\section{Applications -- Proof of Corollaries~\ref{t:locally_constant} and \ref{t:fiber_bunched}}\label{sec:examples}

The two corollaries can be proved together. In both cases, we may let $J\subset \R$ be a compact interval so that it contains the almost sure spectrum $\Sigma$ and $A^E$ is either locally constant or fiber bunched over $J$. Let $\CF_f$ be the finite set and $J_\eta$, $\eta>0$, be the finite union of compact intervals as described before the statement of Theorem~\ref{t:uld}. In the same description, we know that PLE holds true on $J_\eta$, and by Theorem~\ref{t:uld}, ULD holds true on $J_\eta$. Hence, by Theorem~\ref{t:main} $H_\omega$ has exponential dynamical localization on $J_\eta$ for $\mu$-a.e. $\omega$.

Thus for every $n\in\Z_+$, there is a full measure set $\Omega^{(n)}\subset \Omega$ so that $H_\omega$ has spectral localization for all $\omega\in\Omega^{(n)}$ on $J_{\frac1n}$. Also, for every $E\in\R$, the set 
\[
\Omega_E:=\{\omega: E \mbox{ is not an eigenvalue of } H_\omega \}
\]
has full measure. This is an easy consequence of Oseledec's theorem or it follows directly from \cite{pastur1980}. Now let 
\[
\Omega^*=\left(\bigcap_{n\in\Z_+}\Omega^{(n)}\right)\cap\left(\bigcap_{E\in \CF_f}\Omega_E\right).
\] 
Then it is clear that $\mu(\Omega^*)=1$. Moreover, for each $\omega\in\Omega^*$, $H_\omega$ has pure point spectrum on $\R\setminus \CF_f$, exponentially decaying eigenfunctions for all eigenvalues $z \in \R \setminus \CF_f$, and $\CF_f$ contains no eigenvalue of $H_\omega$. Hence $H_\omega$ exhibits Anderson localization for each $\omega\in\Omega^*$, concluding the proof.

Finally, Corollary~\ref{cor:doubling} is a special case of Corollary~\ref{t:fiber_bunched}, again noting the brief discussion of half-line localization in Remark~\ref{rem:halflineops}, while Corollary~\ref{cor:markov} is a special case of Corollary~\ref{t:locally_constant}.

\end{document}